\definecolor{orange}{RGB}{255,127,0}
\def\*#1{\mathbf{#1}}
\newtheorem{thm}{Theorem}[section]
\newtheorem{assum}[thm]{Assumption}
\newtheorem{lem}[thm]{Lemma}
\newtheorem{cor}[thm]{Corollary}
\newtheorem{example}{Example}
\DeclarePairedDelimiter\ceil{\lceil}{\rceil}
\DeclarePairedDelimiter\floor{\lfloor}{\rfloor}
\begin{document}

\title{S-NEAR-DGD: A Flexible Distributed Stochastic Gradient Method for Inexact Communication}

\author{Charikleia~Iakovidou, Ermin~Wei
        % <-this % stops a space
\thanks{{C. Iakovidou  and E. Wei are with the Department of Electrical and Computer Engineering, Northwestern University,
        Evanston, IL USA,  
        {\tt\small chariako@u.northwestern.edu, ermin.wei@northwestern.edu}}}% <-this % stops a space
}

% The paper headers
% \markboth{Journal of \LaTeX\ Class Files,~Vol.~14, No.~8, August~2015}%
% {Shell \MakeLowercase{\textit{et al.}}: Bare Demo of IEEEtran.cls for IEEE Journals}

% make the title area
\maketitle

% As a general rule, do not put math, special symbols or citations
% in the abstract or keywords.
\begin{abstract}

% In recent years, the growing availability of high-dimensional datasets has rendered centralized optimization prohibitive due to storage and computational constraints. In addition, modern advancements such as smartphone and multi-vehicle networks pose the problem of learning efficiently and securely from data located across thousands of nodes. These developments have caused a surge of interest in distributed optimization for large-scale learning problems. However, large-scale systems have inherent limitations that are typically overlooked in distributed optimization literature. 

% \ew{Need a title, update contact information etc, make that the first thing to do when typing the paper.} 
We present and analyze a stochastic  distributed method (S-NEAR-DGD) that can tolerate inexact computation and inaccurate information exchange to alleviate the problems of costly gradient evaluations and bandwidth-limited communication in large-scale systems. Our method is based on a class of flexible, distributed first order algorithms that allow for the trade-off of computation and communication to best accommodate the application setting. We assume that all the information exchange between nodes is subject to random distortion and that only stochastic approximations of the true gradients are available. Our theoretical results prove that the proposed algorithm converges linearly in expectation to a neighborhood of the optimal solution for strongly convex objective functions with Lipschitz gradients. We characterize the dependence of this neighborhood on algorithm and network parameters, the quality of the communication channel and the precision of the stochastic gradient approximations used. Finally, we provide numerical results to evaluate the empirical performance of our method.

\end{abstract}

\IEEEpeerreviewmaketitle

\section{Introduction}

The study of distributed optimization algorithms has been an area of intensive research for more than three decades. The need to harness the computing power of multiprocessors to boost performance and solve increasingly complex problems~\cite{tsitsiklis_Problems_1984,bertsekas_parallel}, and the emergence of a multitude of networked systems that lack central coordination such as wireless sensors networks~\cite{predd_ml_2009,sensor4,sensor5}, power systems~\cite{power5,smart_grids,giannakisPower} and multi-robot and multi-vehicle networks~\cite{vehicles,robots2,robots3}, necessitated the development of optimization algorithms that can be implemented in a distributed manner. Moreover, the proliferation of data in recent years coupled with storage constraints, growing computation costs and privacy concerns have sparked significant interest in decentralized optimization for machine learning~\cite{nedic_ML_review,hong2016unified,richtarik2016parallel}.

One of the most well-studied settings in distributed optimization is often referred to as {\it consensus optimization} ~\cite{bertsekas_parallel}. Consider a connected, undirected network of~$n$ nodes $\mathcal{G}(\mathcal{V},\mathcal{E})$, where $\mathcal{V}$ and $\mathcal{E}$ denote the sets of nodes and edges, respectively. The goal is to collectively solve a decomposable optimization problem,
\begin{equation}
    \min_{x \in \mathbb{R}^p} f(x) = \sum_{i=1}^n f_i(x),
    \label{eq:prob_orig}
\end{equation}
where $x \in \mathbb{R}^p$ and $f_i: \mathbb{R}^p \rightarrow \mathbb{R}$.
Each function $f_i$ is private to node $i$. To enable distributed computation, each node maintains a local copy $x_i$ in $\mathbb{R}^p$ to approximate the global variable $x$. Problem~(\ref{eq:prob_orig}) can then be equivalently reformulated into the following,
\begin{equation}
    \label{eq:consensus_prob}
    \begin{split}
    \min_{x_i \in \mathbb{R}^{p}} \sum_{i=1}^n f_i(x_i), \quad \text{s.t. } x_i=x_j, \quad \forall(i,j) \in \mathcal{E}.
    \end{split}
\end{equation}
% where $\mathcal{N}_i$ denotes the set of neighbors of node $i$, i.e. \mbox{$\mathcal{N}_i = \{j \in V: (i,j) \in E\}$}.
%While problems (\ref{eq:prob_orig}) and (\ref{eq:consensus_prob}) are equivalent, problem (\ref{eq:consensus_prob}) is separable with respect to $x_i$ and can be solved in a completely decentralized fashion. 
A similar problem setup was studied as far back as in~\cite{tsitsiklis_Problems_1984,tsitsiklis_distributed,bertsekas_parallel}. A well-known iterative method for this problem, the Distributed (Sub)Gradient Descent (DGD) method~\cite{NedicSubgradientConsensus}, involves taking local gradient steps and weighted averages with neighbors at each iteration. A class of algorithms known as gradient tracking methods, which include EXTRA and DIGing, can be viewed as an improved version of DGD with an additional step of averaging gradient information amongst neighbors, and can achieve exact convergence under constant stepsize~\cite{diging,extra,exact_diffusion,harnessing_smoothness}. %,aug_dgm}.   %Other distributed methods for solving Problem~(\ref{eq:consensus_prob}) include first order algorithms~\cite{nedic_constrained_2010,MouraFastGrad,diffusion,fast_prox_grad,berahas_balancing_2019}, gradient tracking methods~\cite{diging,extra,exact_diffusion,harnessing_smoothness,aug_dgm}, primal-dual algorithms~\cite{zhu_distr_conv_2012,KIA2015254,samira_primaldual,primal_dual4,samira_flexible,lan_communication-efficient_2017}, Newton-based methods~\cite{newton3,wei_newton,mokhtari_newton,samira_newton}, the Alternating Direction Method of Multipliers (ADMM)~\cite{ADMMBoyd,admm3,admm4,wei_admm,admm_wei2} and coordinate descent algorithms~\cite{hong2016unified,richtarik2016parallel,fercoq_distributed_coordinate,arock}. 
A recent summary of several distributed methods for solving problem~(\ref{eq:consensus_prob})  can be found in~\cite{survey_distr_opt}.

% Other distributed methods for solving Problem~(\ref{eq:consensus_prob}) include first order algorithms~\cite{nedic_constrained_2010,MouraFastGrad,diffusion,fast_prox_grad,berahas_balancing_2019}, gradient tracking methods~\cite{diging,extra,exact_diffusion,harnessing_smoothness,aug_dgm}, primal-dual algorithms~\cite{zhu_distr_conv_2012,KIA2015254,samira_primaldual,primal_dual4,samira_flexible,lan_communication-efficient_2017}, Newton-based methods~\cite{newton3,wei_newton,mokhtari_newton,samira_newton}, the Alternating Direction Method of Multipliers (ADMM)~\cite{ADMMBoyd,admm3,admm4,wei_admm,admm_wei2} and coordinate descent algorithms~\cite{hong2016unified,richtarik2016parallel,fercoq_distributed_coordinate,arock}, among others.

Distributed optimization algorithms typically rely on some combination of local computation steps, where the nodes aim to decrease their local functions, and consensus (or communication) steps, where nodes exchange information with their neighbors over the network. The amounts of computation and communication executed at every iteration are usually fixed for a given algorithm. However due to the diversity of distributed optimization applications, a "one size fits all" approach is unlikely to achieve optimal performance in every setting. The goal of this work is to study and develop a flexible, fast and efficient distributed method that can be customized depending on application-specific requirements and limitations. 

In addition to flexibility, our proposed method can also address two major challenges for distributed optimization methods: communication bottlenecks and costly gradient evaluations. Next, we summarize some the existing techniques to tackle these two issues. %methods making use of bandwidth-saving communication schemes and/or inexpensive gradient approximations. 
% We also devote a paragraph to communication-aware distributed methods, which aim to optimize information exchange between nodes.

% \textcolor{blue}{***editing finished up to here}

\subsection{Literature review}

\subsubsection{Distributed optimization algorithms with quantized communication}

The amount of communication between nodes has long been identified as a major performance bottleneck in decentralized computing, especially as the volume and dimensionality of available data increase~\cite{reisizadeh_exact_2018,lan_communication-efficient_2017}. 
%Limiting inter-node communication without overly sacrificing accuracy is an active topic, and popular solutions include applying quantization methods~\cite{reisizadeh_exact_2018,alistarh2017qsgd,rabbat_quantized_2005}, sparsification algorithms~\cite{sparsified_sgd,grad_sparsific} and other compression techniques such as low-rank representations~\cite{federated3,NEURIPS2019_d9fbed9d}. 
% either quantization techniques~\cite{reisizadeh_exact_2018,alistarh2017qsgd,rabbat_quantized_2005}, which compress information so that it can be transmitted with fewer bits, or sparsification methods~\cite{grad_sparsific,alistarh_sparsification}, which aim to decrease the number of transmitted bits while simultaneously enforcing vector sparsity (other approaches can be found in~\cite{federated3,NEURIPS2019_d9fbed9d}). While sparsification methods yield substantial gains in practice~\cite{alistarh_sparsification}, they increase the variance of the information exchanged via the communication channel~\cite{grad_sparsific} and for this reason we do not investigate them in this work. \hi{not exactly true, might want to change this last sentence} 
Moreover, in any practical setting where the bandwidth of the communication channel is limited, the information exchanged cannot be represented by real-valued vectors with arbitrary precision.  
% This limitation can prevent algorithms from converging to the true optimal value of Problem~(\ref{eq:consensus_prob}) \cite{doan_accelerating_2018,doan_distributed_2018,pu_quantization_2017,aysal_distributed_2008}. 
Both of these concerns motivate us to design distributed optimization methods where nodes receive inexact/quantized information from their neighbors.
% As a result, there is an extensive body of work on studying the effects of quantized communication on the convergence of distributed algorithms and designing methods robust to quantization error.
% \hi{compress: An energy-based analysis of distributed incremental algorithms under quantized communication was first established in~\cite{rabbat_quantized_2005}.
% % which characterized the dependency of the quantization-induced error on the quantization interval. 
% The convergence properties of DGD~\cite{NedicSubgradientConsensus} when both communication and storage are quantized were later studied in~\cite{nedic_distributed_2008}, while~\cite{li_distributed_2017} investigates the behavior of the same algorithm under deterministically quantized communication. The distributed dual averaging method~\cite{duchi_ml_2012} under both deterministic and probabilistic quantization of transmitted information is studied in~\cite{yuan_distributed_2012}.} 
Distributed methods that can work with quantized communication include incremental algorithms~\cite{rabbat_quantized_2005}, DGD~\cite{nedic_distributed_2008,li_distributed_2017} and the dual averaging method~\cite{yuan_distributed_2012}.
Different approaches have been proposed to guide distributed algorithms with inexact communication towards optimality, such as using weighted averages of incoming quantized information and local estimates~\cite{reisizadeh_exact_2018,doan_distributed_2018}, designing custom quantizers~\cite{lee2018finite,doan_accelerating_2018,pu_quantization_2017}, employing encoding/decoding schemes~\cite{alistarh2017qsgd,reisizadeh_exact_2018,yi_quantized_2014}, and utilizing error correction terms~\cite{zhu_param_estim,lee2018finite,doan_accelerating_2018}. Among these, only~\cite{pu_quantization_2017,lee2018finite} achieve exact geometric convergence by employing dynamic quantizers that require the tuning of additional parameters and global information at initialization. However, neither of these methods allow for adjusting the amounts of computation and communication executed at every iteration.

% Finally, distributed averaging (consensus) algorithms under various quantized communication conditions have been previously studied. Examples include deterministic quantization schemes~\cite{nedic_distributed_2009,kashyap_quantized_2006,minghui_zhu_convergence_2008,chamie_quantization}, probabilistic/randomized techniques~\cite{aysal_distributed_2008,charron-bost_randomization_2018} and dynamic communication protocols~\cite{chamie_adaptive}. 

% \textcolor{blue}{***EDIT FROM HERE}

\subsubsection{Stochastic gradient in distributed optimization}

The computation of exact gradients for some large-scale problems can be prohibitively expensive and hence stochastic approximations obtained by sampling a subset of data are often used instead. Various studies and analyses on stochastic gradient based methods have been done in centralized settings ~\cite{sgd_bottou, goyal_minibatch}, federated learning (client-server model)~\cite{federated5,parallel_sgd} and distributed settings over general topologies~\cite{lian_can_nodate, morral_success_2017,pu_central_distr}, which is the setting our work adopts. 
In this particular setting, existing approaches include stochastic variants of DGD~\cite{sundharram_distributed_2010,srivastava_distributed_2011,lian_can_nodate}, 
% The authors of~\cite{bijral_data-dependent_2016} perform a data-dependent analysis of DGD, and the authors of~\cite{sirb_decentralized_2016} study stochastic DGD with communication delays. 
stochastic diffusion algorithms~\cite{towfic_adaptive_2014,morral_success_2017,olshevsky_non-asymptotic_2019}, primal-dual methods~\cite{chatzipanagiotis_distributed_2016,lan_communication-efficient_2017,hong_stochastic_2017}, gradient-push algorithms~\cite{nedic_stochastic_2014,olshevsky_robust_2018}, dual-averaging methods~\cite{duchi_ml_2012}, accelerated distributed algorithms~\cite{fallah2019robust} % and distributed stochastic mirror descent~\cite{nokleby_distributed_2017}and flocking-based methods~\cite{pu_flocking-based_2017}. 
and stochastic distributed gradient tracking methods ~\cite{mokhtari_dsa:_2015, pu_distributed_2018-1,shen_towards_2018,variance_reduced_gt,li2020sdiging}. While some of these methods can achieve exact convergence (in expectation) with linear rates (eg. stochastic variants of gradient tracking, exact diffusion), they need to be combined with variance reduction techniques that may have excessive memory requirements. Moreover, all of the aforementioned algorithms have a fixed structure and lack adaptability to different cost environments.

\subsection{Contributions}

In this paper, we propose and analyze a distributed first order algorithm, the Stochastic-NEAR-DGD (S-NEAR-DGD) method, that uses stochastic gradient approximations and tolerates the exchange of noisy information to save on bandwidth and computational resources. Our method is based on a class of flexible algorithms (NEAR-DGD)~\cite{berahas_balancing_2019} that permit the trade-off of computation and communication to best accommodate the application setting. In this work, we generalize our previous results analyzing NEAR-DGD in the presence of either deterministically quantized communication~\cite{berahas2019nested} or stochastic gradient errors~\cite{Iakovidou2019NestedDG}, and unify them under a common, fully stochastic framework. We provide theoretical results to demonstrate that S-NEAR-DGD converges to a neighborhood of the optimal solution with geometric rate, and that if an error-correction mechanism is incorporated to consensus, then the total communication error induced by inexact communication is independent of the number of consensus rounds peformed by our algorithm. Finally, we empirically show by conducting a series of numerical experiments that S-NEAR-DGD performs comparably or better than state-of-the-art methods, depending on how quantized consensus is implemented.

The rest of the paper is organized as follows. In Section \ref{sec:algo}, we introduce the S-NEAR-DGD method. Next, we analyze the convergence properties of S-NEAR-DGD in Section~\ref{sec:analysis}. We present our numerical results in Section~\ref{sec:numerical} and conclude this work in Section~\ref{sec:conclusion}.

\subsection{Notation}
In this paper, all vectors are column vectors. The concatenation of local vectors $v_{i}$ in $\mathbb{R}^p$ is denoted by $\mathbf{v} = [v_i]_{i=\{1,2,\ldots,n\}}$ in $\mathbb{R}^{np}$ with a lowercase boldface letter. We use uppercase boldface letters for matrices. We will use the notations $I_p$ and $1_n$ for  the identity matrix of dimension $p$ and the vector of ones of dimension $n$, respectively. The element in the $i$-th row and $j$-th column of a matrix $\*H$ will be denoted by $h_{ij}$, and the $p$-th element of a vector $v$ by $\left[v\right]_p$. The transpose of a vector $v$ will be denoted by $v^T$. We will use $\|\cdot\|$ to denote the $l_2$-norm, i.e. for $v \in \mathbb{R}^p$ $\left\|v\right\|= \sqrt{\sum_{i=1}^p \left[v\right]_i^2}$, and $\langle v, u \rangle$ to denote the inner product of two vectors $v,u$. We will use $\otimes$ to denote the Kronecker product operation. Finally, we define $\mathcal{N}_i$  to be the set of neighbors of node $i$, i.e., $\mathcal{N}_i = \{j \in \mathcal{V}: (i,j) \in \mathcal{E}\}$.

\section{The S-NEAR-DGD method}
\label{sec:algo} 
% \ewc{It's good to have an overview inside each section, but in the introduction part, things should be kept at a high level.}

In this section, we first introduce a few standard technical assumptions and notation on our problem setting, followed by a quick review of the NEAR-DGD method, which serves as the main building block of the proposed method. Finally, we present the S-NEAR-DGD method. We adopt the following standard assumptions on the local functions $f_i$ %:\mathbb{R}^p\rightarrow\mathbb{R}$ 
of problem~\eqref{eq:consensus_prob}.
% in case we need citations on standard assumptions (e.g.~\cite{nedic_stochastic_2014,KIA2015254,harnessing_smoothness,jakovetic_convergence_2018, olshevsky_robust_2018})
\begin{assum} \textbf{(Local Lipschitz gradients)}
\label{assum:lip}
Each local objective function $f_i$ has \mbox{$L_i$-Lipschitz} continuous gradients, i.e. $ \|\nabla f_i(x) - \nabla f_i(y)\| \leq L_i\|x-y\|,\quad \forall x,y \in \mathbb{R}^p$.
\end{assum}
\begin{assum} \textbf{(Local strong convexity)}
\label{assum:conv}
Each local objective function $f_i$ is \mbox{$\mu_i$-strongly} convex, i.e. $ f_i(y) \geq f_i(x) + \langle \nabla f_i(x), y-x \rangle + \frac{\mu_i}{2}\|x-y\|_2^2,\quad \forall x,y \in \mathbb{R}^p$.
\end{assum}
We can equivalently rewrite problem~\eqref{eq:consensus_prob} in the following compact way,
\begin{equation}
    \label{eq:consensus_prob2}
    \begin{split}
    \min_{\*x \in \mathbb{R}^{np}} \*f\left(\*x\right)=\sum_{i=1}^n f_i(x_i), &\quad \text{s.t. } \left(\*W \otimes I_p \right)\*x=\*x,
    \end{split}
\end{equation}
where $\*x=[x_i]_{i=\{1,2,\ldots,n\}}$ in $\mathbb{R}^{np}$ is the concatenation of local variables $x_i$ , $\*f:\mathbb{R}^{np}\rightarrow \mathbb{R}$ and $\*W \in \mathbb{R}^{n \times n}$ is a matrix satisfying the following condition.
\begin{assum} \textbf{(Consensus matrix)}
\label{assum:consensus_mat} The matrix $\*W \in \mathbb{R}^{n \times n}$
has the following properties: i) symmetry, ii) double stochasticity, and iii) $w_{i,j} > 0$ if and only if $j \in \mathcal{N}_i$ or $i=j$ and $w_{i,j} = 0$ otherwise.
\end{assum}
We will refer to $\*W$ as the \emph{consensus matrix} throughout this work. Since $\*W$ is symmetric it has $n$ real eigenvalues, which we order by $\lambda_n \leq \lambda_{n-1} \leq ... \leq \lambda_2 \leq \lambda_1$ in ascending order. Assumption~\ref{assum:consensus_mat} implies that $\lambda_1 = 1$ and $\lambda_2<\lambda_1$ for any connected network. The remaining eigenvalues have absolute values strictly less than 1, i.e., $-1<\lambda_n$. Moreover, the equality $\left(\*W \otimes I_p \right)\*x=\*x$ holds if and only if $x_i=x_j$ for all $j \in \mathcal{N}_i$~\cite{NedicSubgradientConsensus}, which establishes  the equivalence between the two formulations~(\ref{eq:consensus_prob}) and~(\ref{eq:consensus_prob2}). 

We will refer to the absolute value of the eigenvalue with the second largest absolute value of $\*W$ as $\beta$, i.e. $\beta = \max\left\{|\lambda_2|,|\lambda_n|\right\}$. The iteration $\*x^{k+1}=\left(\*W \otimes I_p \right)\*x^{k},$ commonly referred to as the {\it consensus} step, can be implemented in a distributed way where each node exchanges information with neighbors and updates its private value of $x_i$ by taking a weighted average of its local neighborhood. When taking $k$ to infinity, all nodes in the network converge to the same value and thus reach consensus. It has been shown that $\beta$ controls the speed of consensus, with smaller values yielding faster convergence~\cite{boyd_consensus}.

% \ewc{Need to define what "consensus" means.}

\subsection{The NEAR-DGD method}
We next review the NEAR-DGD method~\cite{berahas_balancing_2019}, on which this work is based. NEAR-DGD is an iterative distributed method, where at every iteration, each node first 
% \ewc{they do not optimize at each iteration, optimize implies a minimization step, not a gradient step} 
decreases its private cost function by taking a local gradient (or \emph{computation}) step. Next, it performs a number of nested consensus (or \emph{communication}) steps. We denote the local variable at node $i$, iteration count $k$ and consensus round $j$ by $v_{i,k}^j \in \mathbb{R}^p$ (if the variable $v$ is not updated by consensus, the superscript will be omitted). We will use the notation $\bar{v}_k:=\sum_{i=1}^n v_{i,k}$ to refer to the average of $v_{i,k}$ across nodes. 

Starting from arbitrary initial points $x^{t(0)}_{i,0}=y_{i,0} \in \mathbb{R}^{p}$, the local iterates of NEAR-DGD at iteration $k=1,2,...$ can be expressed as
\begin{subequations}
\begin{align}
&y_{i,k}= x^{t(k-1)}_{i,k-1} - \alpha \nabla f_i \left(x^{t(k-1)}_{i,k-1}\right),\label{eq:near_dgd_local_y_ORIG}\\
&x^{t(k)}_{i,k} = \sum_{l=1}^n w^{t(k)}_{il} y_{l,k},\label{eq:near_dgd_local_x_ORIG}
\end{align}
\end{subequations}
where $t(k)$ is the number of consensus rounds performed at iteration $k$, $\alpha>0$ is a positive steplength, and 
\begin{equation*}
    \*W^{t(k)}=\underbrace{\*W \cdot \*W \cdot ... \cdot \*W}_{t(k)\text{ times}} \in \mathbb{R}^{n \times n}.
\end{equation*}
The flexibility of NEAR-DGD lies in the sequence of consensus rounds per iteration $\{t(k)\}$, which can be tuned depending on the deployed environment to balance convergence accuracy/speed and total application cost. With an increasing sequence of $\{t(k)\}$, NEAR-DGD can achieve exact convergence to the optimal solution of problem~\eqref{eq:prob_orig} and with $t(k) = k$ it converges linearly (in terms of gradient evaluations) for strongly convex functions~\cite{berahas_balancing_2019}.

\subsection{The S-NEAR-DGD method}
To accommodate bandwidth-limited communication channel,  we assume that whenever node $i \in \{1,...,n\}$ needs to communicate a vector $v \in \mathbb{R}^p$ to its neighbors, it sends an approximate vector $\mathcal{T}_c\left[v\right]$ instead, i.e., $\mathcal{T}_c\left[\cdot\right]$ is a randomized operator which modifies the input vector to reduce the bandwidth. Similarly, to model the availability of only inexact gradient information, we assume that instead of the true local gradient $\nabla f_i\left(x_i\right)$, node $i$ calculates an approximation $\mathcal{T}_g\left[\nabla f_i \left(x_i\right)\right]$, where $\mathcal{T}_g\left[\cdot\right]$ is a randomized operator denoting the inexact computation. We refer to this method with inexact communication and gradient computation as the Stochastic-NEAR-DGD (S-NEAR-DGD) method. 
\begin{algorithm}
\SetAlgoLined
%\KwInput{sequence of consensus steps $\{t(k)\}$ with $t(0)=0$, steplength $\alpha$, consensus matrix $\*W \in \mathbb{R}^{n \times n}$}
\KwInit{ Pick $x^{t(0)}_{i,0}=y_{i,0}$}
\For{$k=1,2,...$}{
Compute $g_{i,k-1}=\mathcal{T}_g\left[\nabla f_i \left(x^{t(k-1)}_{i,k-1}\right)\right]$\\
Update $y_{i,k}\leftarrow x^{t(k-1)}_{i,k-1} - \alpha g_{i,k-1}$\\
Set $x^0_{i,k}= y_{i,k}$\\
\For{$j=1,...,t(k)$}{
Send $q^{j}_{i,k}=\mathcal{T}_c\left[x^{j-1}_{i,k}\right]$ to neighbors $l \in \mathcal{N}_i$ and receive $q^{j}_{l,k}$ \\ 
Update $x^j_{i,k} \leftarrow \sum_{l=1}^n \left(w_{il}  q^{j}_{l,k} \right) + \left( x^{j-1}_{i,k} - q^j_{i,k}\right)$
}
}
\caption{S-NEAR-DGD at node $i$}
\label{algo:sneardgd}
\end{algorithm}
% \ew{Discussion about the role $y_{i,0}$ plays, if it has to be the same across different i's.} 

Each node $i \in \{1,...,n\}$ initializes and preserves the local variables $x^{j}_{i,k}$ and $y_{i,k}$. At iteration $k$ of S-NEAR-DGD, node $i$ calculates the stochastic gradient approximation $g_{i,k-1}=\mathcal{T}_g\left[\nabla f_i \left(x^{t(k-1)}_{i,k-1}\right)\right]$ and uses it to take a local gradient step and update its internal variable $y_{i,k}$. Next, it sets $x^0_{i,k}= y_{i,k}$ and performs $t(k)$ nested consensus rounds, where during each consensus round $j \in \{1,...,t(k)\}$ it constructs the bandwidth-efficient vector $q^{j}_{i,k}=\mathcal{T}_c\left[x^{j-1}_{i,k}\right]$, forwards it to its neighboring nodes $l \in \mathcal{N}_i$ and receives the vectors $q^j_{l,k}$ from neighbors. Finally, during each consensus round, node $i$ updates its local variable $x^j_{i,k}$ by forming a weighted average of the vectors $q^j_{l,k}$, $l=1,...,n$ and adding the residual error correction term $\left(x^{j-1}_{i,k} - q^j_{i,k}\right)$. The entire procedure is presented in Algorithm~\ref{algo:sneardgd}.

Let $\*x^{t(0)}_0=\*y_0=\left[y_{1,0};...;y_{n,0}\right]$ be the concatenation of local initial points $y_{i,0}$ at nodes $i=1,...,n$ as defined in Algorithm~\ref{algo:sneardgd}. The system-wide iterates of S-NEAR-DGD at iteration count $k$ and $j$-th consensus round can be written compactly as,
% \ew{The system-wide iterates of S-NEAR-DGD (ADD A REFERENCE TO ALGORITHM OR REWRITE DEFINITION OF q, SO THAT THE READERS KNOW WHERE TO LOOK FOR ITS DEFINITION) at iteration count $k$ and $j$-th consensus round can be written compactly as }
\begin{subequations}
\begin{align}
    &\*y_{k} = \*x_{k-1}^{t(k-1)} - \alpha \*g_{k-1}\label{eq:near_dgd_y},\\
    &\*x^j_k =  \*x^{j-1}_k +\left(\*Z - I_{np}\right)\*q^{j}_k,\quad j=1,...,t(k), \label{eq:near_dgd_x}
\end{align}
\end{subequations}
where $\*x^0_k=\*y_k$, $\*Z = \left(\*W \otimes I_p\right) \in \mathbb{R}^{np \times np}$, $g_{i,k-1}=\mathcal{T}_g\left[\nabla f_i \left(x^{t(k-1)}_{i,k-1}\right)\right]$, $q^{j}_{i,k}=\mathcal{T}_c\left[x^{j-1}_{i,k}\right]$ for $j=1,...,t(k)$ and $\*g_{k-1}$ and $\*q^j_k$ are the long vectors formed by concatenating $g_{i,k-1}$ and $q^{j}_{i,k}$ over $i$ respectively. 
%                 $=[g_{i,k-1}]_{i=\{1,2,\ldots,n\}}$, $\*q^j_k=[q^j_{i,k}_{i=\{1,2,\ldots,n\}}$ .

Moreover, due to the double stochasticity of $\*W$, the following relations hold for the average iterates $\bar{y}_k=\frac{1}{n}\sum_{i=1}^n y_{i,k}$ and $\bar{x}^j_k=\frac{1}{n}\sum_{i=1}^n x^j_{i,k}$ for all $k$ and $j$,
\begin{subequations}
\begin{align}
    &\bar{y}_{k} = \bar{x}_{k-1}^{t(k-1)} - \alpha \bar{g}_{k-1}\label{eq:near_dgd_y_avg},\\
    &\bar{x}^j_k =  \bar{x}^{j-1}_k,\quad j=1,...,t(k), \label{eq:near_dgd_x_avg}
\end{align}
\end{subequations}
where $\bar{g}_{k-1}=\frac{1}{n}\sum_{i=1}^n g_{i,k-1}$.

The operators $\mathcal{T}_c\left[\cdot\right]$ and $\mathcal{T}_g\left[\cdot\right]$ can be interpreted as $ \mathcal{T}_c \left[x^{j-1}_{i,k}\right]=x^{j-1}_{i,k}+\epsilon^{j}_{i,k}$, and $\mathcal{T}_g\left[\nabla f_i \left(x^{t(k-1)}_{i,k-1}\right)\right]=\nabla f_i \left(x^{t(k-1)}_{i,k-1}\right)+\zeta_{i,k}$, where $\epsilon^{j}_{i,k}$ and $\zeta_{i,k}$ are random error vectors. We list our assumptions on these vectors and the operators $\mathcal{T}_c\left[\cdot\right]$ and $\mathcal{T}_g\left[\cdot\right]$ below. 
% \ew{When we first use them, we should introduce our indices and how we differentiate between $x_i$ and $\*x$ by using bold face, and subscript $i,k$ where $i$ corresponds to agent $i$ and $k$ is the iteration count. Super script $j$ counts the number of consensus iteration. Also define $\*y^0$ and $\*x^0$. }

\begin{assum}\textbf{(Properties of $\mathcal{T}_c\left[\cdot\right]$)} 
\label{assum:tc_bound} The operator $\mathcal{T}_c\left[\cdot\right]$ is iid for all $i=1,...,n$, $j=1,...,t(k)$ and $k\geq1$. Moreover, the errors $\epsilon^{j}_{i,k}=\mathcal{T}_c \left[x^{j-1}_{i,k}\right]-x^{j-1}_{i,k}$ have zero mean and bounded variance for all $i=1,...,n$, $j=1,...,t(k)$ and $k\geq1$, i.e.,
\begin{gather*}
  \mathbb{E}_{\mathcal{T}_c}\left[\epsilon^j_{i,k}\big|x^{j-1}_{i,k}\right]=0, \quad \mathbb{E}_{\mathcal{T}_c}\left[\|\epsilon^j_{i,k}\|^2 \big |x^{j-1}_{i,k}\right] \leq \sigma_c^2,
\end{gather*}
where $\sigma_c$ is a positive constant and the expectation is taken over the randomness of $\mathcal{T}_c$. 
\end{assum}

\begin{example}\textbf{(Probabilistic quantizer)}
\label{ex:prob_q}

An example of an operator satisfying Assumption~\ref{assum:tc_bound} is the probabilistic quantizer in~\cite{yuan_distributed_2012}, defined as follows: for a scalar $x \in \mathbb{R}$, its quantized value $\mathcal{Q}\left[x\right]$ is given by
\begin{equation*}
    \mathcal{Q}\left[x\right]=\begin{cases} \floor*{x} \quad \text{with probability $\left(\ceil*{x}-x\right)\Delta$} \\ \ceil*{x} \quad \text{with probability $\left(x-\floor*{x}\right)\Delta$},\end{cases}
\end{equation*}
where $\floor*{x}$ and  $\ceil*{x}$ denote the operations of rounding down and up to the nearest integer multiple of $1/\Delta$, respectively, and $\Delta$ is a positive integer.
\end{example}
It is shown in~\cite{yuan_distributed_2012} that $\mathbb{E}\Big[x-\mathcal{Q}\left[x\right]\Big] =0$ and $\mathbb{E}\left[\left|x-\mathcal{Q}\left[x\right]\right|^2\right] \leq \frac{1}{4\Delta^2}$. For any vector $v = [v_i]_{i=\{1,\ldots,p\}}$ in $\mathbb{R}^p$, we can then apply the operator $\mathcal{Q}$ element-wise to obtain $\mathcal{T}_c\left[v\right]=\Big[\mathcal{Q}\left[v_i\right]\Big]_{i=\{1,\ldots,p\}}$ in $\mathbb{R}^p$ with $\mathbb{E}_{\mathcal{T}_c}\left[v - \mathcal{T}_c\left[v\right]\big| v\right] =\mathbf{0}$ and $\mathbb{E}_{\mathcal{T}_c}\left[\left\|v - \mathcal{T}_c\left[v\right]\right\|^2 \big| v\right] \leq \frac{p}{4\Delta^2}=\sigma^2_c$.

\begin{assum}\textbf{(Properties of $\mathcal{T}_g\left[\cdot\right]$)}
\label{assum:tg_bound}  The operator $\mathcal{T}_g\left[\cdot\right]$ is iid for all $i=1,...,n$ and $k\geq1$. Moreover, the errors $\zeta_{i,k}=\mathcal{T}_g\left[\nabla f_i \left(x^{t(k-1)}_{i,k-1}\right)\right]-\nabla f_i \left(x^{t(k-1)}_{i,k-1}\right)$ have zero mean and bounded variance for all $i=1,...,n$ and $k\geq1$,
\begin{gather*}
  \mathbb{E}_{\mathcal{T}_g}\left[\zeta_{i,k} \Big|x^{t(k-1)}_{i,k-1}\right]=0, \quad \mathbb{E}_{\mathcal{T}_g}\left[\left\|\zeta_{i,k}\right\|^2 \Big |x^{t(k-1)}_{i,k-1}\right] \leq \sigma_g^2,
\end{gather*}
where $\sigma_g$ is a positive constant and the expectation is taken over the randomness of $\mathcal{T}_g$.
\end{assum}
Assumption~\ref{assum:tg_bound} is standard in the analysis of distributed stochastic gradient methods~\cite{sundharram_distributed_2010,nedic_stochastic_2014,pu_distributed_2018-1,lian_can_nodate}.
% \sout{and is also satisfied by various smoothing techniques for derivative-free optimization when either the gradients $\nabla f_i$ are bounded or $f_i$ is Lipschitz continuous ~\cite{duchi_gauss_smooth,berahas2019theoretical}} %\ewc{Check accuracy of the statement please. Earlier discussion: this only holds if the gradients $\nabla f_i$ are bounded/$f_i$ is Lipschitz continuous. not sure we should include it. Is this for stochastic gradient or derivative free? We should include the description. This is for derivative free only.}

% \ew{Don't we need assumptions (or describe how it's generated) to guarantee stochastic gradients have bounded variance?}

We make one final assumption on the independence of the operators $\mathcal{T}_c\left[\cdot\right]$ and $\mathcal{T}_g\left[\cdot\right]$, namely that the process of generating stochastic gradient approximations does not affect the process of random quantization and vice versa.

\begin{assum}\textbf{(Independence)}
\label{assum:iid}
The operators $\mathcal{T}_g\left[\cdot\right]$ and $\mathcal{T}_c\left[\cdot\right]$ are  independent for all $i=1,...,n$, $j=1,...,t(k)$ and $k\geq1$.

% The random error vectors $\epsilon^j_{i,k}$ are mutually independent for all $i \in \{1,...,n\}$ and $j \in \{0,...,t(k)-1\}$ and independent of the operator $\mathcal{T}_g\left[\cdot\right]$. In addition, the random error vectors $\zeta_{i,k}$ are mutually independent for all $i \in \{1,...,n\}$ and independent of the randomized operator $\mathcal{T}_c\left[\cdot\right]$.

% \ew{Isn't the rest implied by the independence? Do we need i.i.d or just independence? I'm guessing independence, but want to double check} Moreover, the error sequences
\end{assum}

% \ewc{Shouldn't we only need the independence across $\epsilon$ and $\zeta$? The rest are implied by the iid in the assumptions earlier? If this is the case, please shorten this assumption.}

Before we conclude this section, we note that there are many possible choices for the operators $\mathcal{T}_c\left[\cdot\right]$ and $\mathcal{T}_g\left[\cdot\right]$ and each would yield a different algorithm instance in the family of NEAR-DGD-based methods. For example, both $\mathcal{T}_c\left[\cdot\right]$ and $\mathcal{T}_g\left[\cdot\right]$ can be identity operators as in~\cite{berahas_balancing_2019}. We considered quantized communication using deterministic (D) algorithms (e.g. rounding to the nearest integer with no uncertainty) in~\cite{berahas2019nested}, while a variant of NEAR-DGD that utilizes stochastic gradient approximations only was presented in~\cite{Iakovidou2019NestedDG}. This work unifies and generalizes these methods. We summarize the related works in Table~\ref{tab:ndgd_allmethods}, denoting deterministic and random error vectors with (D) and (R), respectively.

\begin{table*}[t]
\begin{center}
\begin{tabular}{ c c c }
\hline
 \textbf{Method} & \textbf{Communication} & \textbf{Computation}\\ 
 \hline
NEAR-DGD~\cite{berahas_balancing_2019}, NEAR-DGD$^{t_c,t_g}$~\cite{berahas2020convergence} & $ \mathcal{T}_c \left[x^{j}_{i,k}\right]=x^{j}_{i,k}$ &  $\mathcal{T}_g\left[\nabla f_i \left(x^{t(k)}_{i,k}\right)\right]=\nabla f_i \left(x^{t(k)}_{i,k}\right)$ \\  
%  \hline
 NEAR-DGD+Q~\cite{berahas2019nested} & $ \mathcal{T}_c \left[x^{j}_{i,k}\right]=x^{j}_{i,k}+\epsilon^{j+1}_{i,k}$ (D)  & $\mathcal{T}_g\left[\nabla f_i \left(x^{t(k)}_{i,k}\right)\right]=\nabla f_i \left(x^{t(k)}_{i,k}\right)$  \\
%  \hline
 SG-NEAR-DGD~\cite{Iakovidou2019NestedDG} & $ \mathcal{T}_c \left[x^{j}_{i,k}\right]=x^{j}_{i,k}$ & $\mathcal{T}_g\left[\nabla f_i \left(x^{t(k)}_{i,k}\right)\right]=\nabla f_i \left(x^{t(k)}_{i,k}\right)+\zeta_{i,k+1}$ (R)  \\
%   \hline
S-NEAR-DGD (this paper) & $ \mathcal{T}_c \left[x^{j}_{i,k}\right]=x^{j}_{i,k}+\epsilon^{j+1}_{i,k}$ (R)  & $\mathcal{T}_g\left[\nabla f_i \left(x^{t(k)}_{i,k}\right)\right]=\nabla f_i \left(x^{t(k)}_{i,k}\right)+\zeta_{i,k+1}$ (R)  \\
%hline
%  & $ \mathcal{T}_c \left[x^{j-1}_{i,k}\right]=x^{j-1}_{i,k}$ &  $\mathcal{T}_g\left[\nabla f_i \left(x^{t(k-1)}_{i,k-1}\right)\right]=\nabla f_i \left(x^{t(k-1)}_{i,k-1}\right)$  \\
 \hline
\end{tabular}
\caption{Summary of NEAR-DGD-based methods. (D) and (R) denote deterministic and random error vectors respectively.}
\label{tab:ndgd_allmethods}
\end{center}
\end{table*}
% \ew{We can  alternatively view $\mathcal{T}_C[v]$ as $\mathcal{T}_C[v]=v+\epsilon_c$, where $\epsilon_c$ is a random variable with 0 mean and bounded variance.  }

\section{Convergence Analysis}
\label{sec:analysis}
In this section, we present our theoretical results on the convergence of S-NEAR-DGD. We assume that Assumptions~\ref{assum:lip}-\ref{assum:iid} hold for the rest of this paper. We first focus on the instance of our algorithm where the number of consensus rounds is constant at every iteration, i.e., $t(k)=t$ in~\eqref{eq:near_dgd_x} for some positive integer $t>0$. We refer to this method as S-NEAR-DGD$^t$. Next, we will analyze a second variant of S-NEAR-DGD, where the number of consensus steps increases by one at every iteration, namely $t(k)=k$, for $k\geq1$. We will refer to this new version as S-NEAR-DGD$^+$. 

Before our main analysis, we introduce some additional notation and a number of preliminary results.

\subsection{Preliminaries}
We will use the notation $\mathcal{F}^j_k$  to denote the $\sigma$-algebra containing all the information generated by S-NEAR-DGD up to and including the $k$-th inexact gradient step (calculated using using $\*g_{k-1}$) and $j$ subsequent nested consensus rounds. This includes the initial point $\*x_0=\*y_0$, the vectors $\{\*x^l_\tau : 1\leq l \leq t(\tau) \text{ if }1\leq \tau<k \text{ and } 1\leq l \leq j \text{ if }\tau=k \}$, the vectors $\*y_\tau$ for $1 \leq \tau\leq k$, the vectors $\{\*q^l_\tau : 1\leq l \leq t(\tau) \text{ if }1\leq \tau<k \text{ and } 1\leq l \leq j \text{ if }\tau=k \}$ and the vectors $\*g_\tau$ for $0\leq \tau\leq k-1$. For example, $\mathcal{F}^0_k$ would denote the $\sigma$-algebra containing all the information up to and including the vector $\*y_k$ generated at the $k$-th gradient step (notice that $\mathcal{F}^0_k$ contains the inexact gradient $\*g_{k-1}$, but not $\*g_k$), while $\mathcal{F}^l_k$ would store all the information produced by S-NEAR-DGD up to and including $\*x^l_k$, generated at the $l^{th}$ consensus round after the $k$-th gradient step using $\*g_{k-1}$.

We also introduce 4 lemmas here; Lemmas ~\ref{lem:descent_f} and \ref{lem:global_L_mu} will be used  to show that the iterates generated by S-NEAR-DGD$^t$ are bounded and to characterize their distance to the solution of Problem~(\ref{eq:prob_orig}). 
Next, in Lemmas~\ref{lem:comm_error} and~\ref{lem:comp_error} we prove that the total communication and computation errors in a single iteration of  the S-NEAR-DGD$^t$ method have zero mean and bounded variance. These two error terms play a key role in our main analysis of convergence properties. 

% \ew{Let's stick with L-Lipschitz continuous gradient, instead of L-smooth, since 1. I can't find a unique  definition for L-smooth, 2. we defined Lipschitz gradient in Assumption II.1} 

The following lemma is adapted from~\cite[Theorem 2.1.15, Chapter 2]{nesterov_introductory_1998}.
\begin{lem}\textbf{(Gradient descent)}
\label{lem:descent_f} Let $h:\mathbb{R}^d \rightarrow \mathbb{R}$ be a $\mu$-strongly convex function with $L$-Lipschitz gradients and define $x^\star := \arg \min_x h(x)$. Then the gradient method $x_{k+1} = x_k - \alpha \nabla f\left(x_k\right)$ with steplength $\alpha < \frac{2}{\mu+L}$, generates a sequence $\{x_k\}$ such that
\begin{equation*}
\begin{split}
    \left \| x_{k+1} -x^\star \right \|^2  &\leq \left(1-\frac{2\alpha \mu L}{\mu+L}\right)\left \| x_k - x^\star\right\|^2.
\end{split}
\end{equation*}
\end{lem}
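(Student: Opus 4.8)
The plan is to derive this bound from the standard analysis of gradient descent for strongly convex functions with Lipschitz gradients, following the proof of Theorem~2.1.15 in Nesterov's book. I would start by writing $\|x_{k+1}-x^\star\|^2 = \|x_k - x^\star - \alpha\nabla h(x_k)\|^2 = \|x_k-x^\star\|^2 - 2\alpha\langle \nabla h(x_k), x_k - x^\star\rangle + \alpha^2\|\nabla h(x_k)\|^2$, using that $\nabla h(x^\star)=0$. The key ingredient is the co-coercivity-type inequality for functions that are both $\mu$-strongly convex and $L$-smooth: for all $x,y$,
\begin{equation*}
\langle \nabla h(x)-\nabla h(y), x-y\rangle \geq \frac{\mu L}{\mu+L}\|x-y\|^2 + \frac{1}{\mu+L}\|\nabla h(x)-\nabla h(y)\|^2.
\end{equation*}
This is exactly Theorem~2.1.11 in Nesterov.

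Applying this inequality with $x=x_k$ and $y=x^\star$ (and $\nabla h(x^\star)=0$) gives $\langle \nabla h(x_k), x_k-x^\star\rangle \geq \frac{\mu L}{\mu+L}\|x_k-x^\star\|^2 + \frac{1}{\mu+L}\|\nabla h(x_k)\|^2$. Substituting this into the expansion above yields
\begin{equation*}
\|x_{k+1}-x^\star\|^2 \leq \left(1-\frac{2\alpha\mu L}{\mu+L}\right)\|x_k-x^\star\|^2 + \left(\alpha^2 - \frac{2\alpha}{\mu+L}\right)\|\nabla h(x_k)\|^2.
\end{equation*}
The condition $\alpha < \frac{2}{\mu+L}$ ensures that the coefficient $\alpha^2 - \frac{2\alpha}{\mu+L} = \alpha(\alpha - \frac{2}{\mu+L})$ is nonpositive, so the last term can be dropped, giving the claimed inequality.

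I should note the minor discrepancy that the lemma statement writes the iteration as $x_{k+1} = x_k - \alpha\nabla f(x_k)$ whereas the function is called $h$; I would treat this as a typo and read it as $\nabla h(x_k)$. The only genuine ``obstacle'' is that the co-coercivity inequality for simultaneously strongly convex and smooth functions is itself a nontrivial lemma — but since the paper explicitly cites Nesterov's book, I would simply invoke it rather than reprove it; one derives it by applying the standard $L$-smoothness bound to the auxiliary convex function $\phi(x) = h(x) - \frac{\mu}{2}\|x\|^2$, which has $(L-\mu)$-Lipschitz gradient, and then rearranging. Everything else is elementary algebra, so the proof is short.
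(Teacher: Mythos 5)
Your proof is correct and is precisely the standard argument behind the result the paper invokes: the paper gives no proof of this lemma at all, simply citing Nesterov's Theorem 2.1.15, and your derivation (expand the square, apply the co-coercivity inequality for $\mu$-strongly convex, $L$-smooth functions, and drop the nonpositive $\left(\alpha^2 - \frac{2\alpha}{\mu+L}\right)\|\nabla h(x_k)\|^2$ term using $\alpha < \frac{2}{\mu+L}$) is exactly how that cited theorem is proved. You are also right that $\nabla f(x_k)$ in the lemma statement is a typo for $\nabla h(x_k)$.
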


\begin{lem}\textbf{(Global convexity and smoothness)}
\label{lem:global_L_mu}
The global function $\*f:\mathbb{R}^{np}\to\mathbb{R}$, $\*f\left(\*x\right)=\frac{1}{n}\sum_{i=1}^n f_i(x_i)$ is $\mu$-strongly convex and $L$-smooth, where $\mu=\min_i \mu_i$ and $L=\max_i L_i$. In addition, the average function  $\bar f:\mathbb{R}^{p}\to\mathbb{R}$, $\bar{f}(x)=\frac{1}{n}\sum_{i=1}^n f_i(x)$ is $\mu_{\bar{f}}$-strongly convex and $L_{\bar{f}}$-smooth, where $\mu_{\bar{f}}=\frac{1}{n}\sum_{i=1}^n \mu_i$ and $L_{\bar{f}}=\frac{1}{n}\sum_{i=1}^n L_i$.
\end{lem}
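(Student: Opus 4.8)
The plan is to reduce both parts of the lemma to the first-order definitions in Assumptions~\ref{assum:lip} and~\ref{assum:conv}, exploiting that $\*f$ is \emph{separable} across the blocks $x_i$ while $\bar f$ is a \emph{pointwise average} of the $f_i$. No second-order information is needed, which is convenient since the $f_i$ are not assumed twice differentiable; working directly with the variational inequalities keeps the argument uniform.

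For $\*f$ (taken as in~\eqref{eq:consensus_prob2}): fix $\*x=[x_i]_{i}$ and $\*y=[y_i]_{i}$ in $\mathbb{R}^{np}$. Since $\nabla \*f(\*x)$ is the concatenation of the local gradients $\nabla f_i(x_i)$ (up to the normalization constant in the definition of $\*f$), we have $\sum_i \langle \nabla f_i(x_i), y_i - x_i\rangle = \langle \nabla \*f(\*x), \*y - \*x\rangle$ and $\|\*y-\*x\|^2 = \sum_i \|y_i - x_i\|^2$. Summing the strong-convexity inequality of Assumption~\ref{assum:conv} over $i=1,\dots,n$ and using $\sum_i \tfrac{\mu_i}{2}\|y_i-x_i\|^2 \geq \tfrac{\mu}{2}\sum_i\|y_i-x_i\|^2$ with $\mu = \min_i \mu_i$ yields the strong-convexity inequality for $\*f$. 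For Lipschitz continuity of $\nabla\*f$, the block structure gives $\|\nabla\*f(\*x) - \nabla\*f(\*y)\|^2 = \sum_i \|\nabla f_i(x_i) - \nabla f_i(y_i)\|^2 \leq \sum_i L_i^2 \|x_i - y_i\|^2 \leq L^2 \|\*x - \*y\|^2$ with $L = \max_i L_i$, invoking Assumption~\ref{assum:lip} blockwise.

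For $\bar f$: here all $f_i$ are evaluated at the same point, so the correct aggregation is the arithmetic mean rather than the min/max. Averaging the inequality of Assumption~\ref{assum:conv} over $i$ and dividing by $n$ yields strong convexity of $\bar f$ with modulus $\tfrac1n\sum_i\mu_i = \mu_{\bar f}$; applying the triangle inequality to $\nabla\bar f(x) - \nabla\bar f(y) = \tfrac1n\sum_i(\nabla f_i(x) - \nabla f_i(y))$ and then Assumption~\ref{assum:lip} termwise gives $\|\nabla\bar f(x) - \nabla\bar f(y)\| \leq \tfrac1n\sum_i L_i\|x-y\| = L_{\bar f}\|x-y\|$.

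The argument is entirely elementary and I do not expect a genuine obstacle; the only thing that needs care is the bookkeeping, namely recognizing that the block-separable function $\*f$ inherits the \emph{worst-case} moduli $\min_i\mu_i$ and $\max_i L_i$ (its Hessian, where it exists, is block-diagonal, so its spectrum is the union of the local spectra), whereas the pointwise-average function $\bar f$ inherits the \emph{averaged} moduli, and keeping the normalization convention for $\*f$ consistent between $\*f$ and $\nabla\*f$ throughout.
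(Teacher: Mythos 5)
Your proof is correct and is exactly the elementary expansion of what the paper dismisses in one line as "a direct consequence of Assumptions~\ref{assum:lip} and~\ref{assum:conv}": summing the blockwise inequalities for the separable function $\*f$ (worst-case moduli) and averaging them at a common point for $\bar f$ (mean moduli). You are also right to flag the normalization: the constants $\mu=\min_i\mu_i$ and $L=\max_i L_i$ match the unnormalized sum $\sum_i f_i(x_i)$ of~\eqref{eq:consensus_prob2}, so the $\tfrac1n$ in the lemma's restatement of $\*f$ should be read as a typo, as your bookkeeping implicitly assumes.
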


\begin{proof}
This is a direct consequence of Assumptions~\ref{assum:lip} and~\ref{assum:conv}. \end{proof} 

% \ew{I saw you added Lemma 3 in v2 for finite sum of series, do you still need it here? It seems like you've explained all your steps. Where do you use the 3rd line in the Lemma 3 of v2? } 

% \hi{needed 3rd line for an older version of deriving the constants in v2. I decided to skip the lemma in this version bc we only need the first sum bound so I added it inside the proofs instead}

\begin{lem}\textbf{(Bounded communication error)}
\label{lem:comm_error}
Let $\mathcal{E}^c_{t,k} := \*x^{t}_k - \*Z^{t}\*y_k$ be the total communication error at the $k$-th iteration of S-NEAR-DGD$^t$, i.e. $t(k)=t$ in~\eqref{eq:near_dgd_y} and~\eqref{eq:near_dgd_x}. Then the following relations hold for $k\geq1$,
\begin{equation*}
    \begin{split}
       \mathbb{E}_{\mathcal{T}_c}\left[\mathcal{E}^{c}_{t,k} \big| \mathcal{F}^0_k \right] = \mathbf{0}, \quad \mathbb{E}_{\mathcal{T}_c}\left[\left \|\mathcal{E}^{c}_{t,k}  \right\|^2  \big| \mathcal{F}^0_k \right]
        &\leq \frac{4n\sigma_c^2}{1-\beta^2}.
    \end{split}
\end{equation*}
\end{lem}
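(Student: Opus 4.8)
The plan is to first unroll the inner consensus recursion~\eqref{eq:near_dgd_x} so as to write $\mathcal{E}^c_{t,k}$ as an explicit linear combination of the per-round quantization errors, and then to control its conditional mean and conditional second moment using the filtration $\{\mathcal{F}^j_k\}$ and Assumptions~\ref{assum:tc_bound} and~\ref{assum:iid}. Writing $\*q^j_k = \*x^{j-1}_k + \boldsymbol{\epsilon}^j_k$, where $\boldsymbol{\epsilon}^j_k$ is the concatenation of the errors $\epsilon^j_{i,k}$ over $i$, and substituting into~\eqref{eq:near_dgd_x} gives $\*x^j_k = \*Z\*x^{j-1}_k + (\*Z - I_{np})\boldsymbol{\epsilon}^j_k$; since $\*x^0_k = \*y_k$, a straightforward induction on $j$ yields $\*x^t_k = \*Z^t\*y_k + \sum_{j=1}^t \*Z^{t-j}(\*Z - I_{np})\boldsymbol{\epsilon}^j_k$, that is,
\begin{equation*}
\mathcal{E}^c_{t,k} = \sum_{j=1}^t \*Z^{t-j}(\*Z - I_{np})\boldsymbol{\epsilon}^j_k.
\end{equation*}

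For the mean, note that $\*x^{j-1}_k$ is $\mathcal{F}^{j-1}_k$-measurable and that, by the iid property of $\mathcal{T}_c$ in Assumption~\ref{assum:tc_bound} together with Assumption~\ref{assum:iid}, $\mathbb{E}_{\mathcal{T}_c}[\boldsymbol{\epsilon}^j_k \mid \mathcal{F}^{j-1}_k] = \mathbf{0}$. Since $\mathcal{F}^0_k \subseteq \mathcal{F}^{j-1}_k$, the tower property gives $\mathbb{E}_{\mathcal{T}_c}[\boldsymbol{\epsilon}^j_k \mid \mathcal{F}^0_k] = \mathbf{0}$ for every $j$, and summing over $j$ proves the first claim.

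For the second moment I would expand $\|\mathcal{E}^c_{t,k}\|^2$ and treat cross terms and diagonal terms separately. For $j < j'$, conditioning on $\mathcal{F}^{j'-1}_k$ the factor $\*Z^{t-j}(\*Z - I_{np})\boldsymbol{\epsilon}^j_k$ is measurable while $\mathbb{E}[\boldsymbol{\epsilon}^{j'}_k \mid \mathcal{F}^{j'-1}_k] = \mathbf{0}$, so the conditional expectation of the inner product vanishes, and by the tower property so does $\mathbb{E}[\cdot \mid \mathcal{F}^0_k]$. Hence $\mathbb{E}_{\mathcal{T}_c}[\|\mathcal{E}^c_{t,k}\|^2 \mid \mathcal{F}^0_k] = \sum_{j=1}^t \mathbb{E}_{\mathcal{T}_c}[\|\*Z^{t-j}(\*Z - I_{np})\boldsymbol{\epsilon}^j_k\|^2 \mid \mathcal{F}^0_k] \leq \sum_{j=1}^t \|\*Z^{t-j}(\*Z - I_{np})\|^2\,\mathbb{E}_{\mathcal{T}_c}[\|\boldsymbol{\epsilon}^j_k\|^2 \mid \mathcal{F}^0_k]$, with $\|\cdot\|$ on a matrix denoting the spectral norm. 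The matrix norm is bounded via $\*Z = \*W \otimes I_p$: the symmetric matrix $\*Z^{t-j}(\*Z - I_{np})$ has eigenvalues $\lambda_i^{t-j}(\lambda_i - 1)$; the eigenvalue $\lambda_1 = 1$ contributes $0$, while for $i \geq 2$ we have $|\lambda_i| \leq \beta$ and $|\lambda_i - 1| < 2$ (since $-1 < \lambda_i < 1$), so $\|\*Z^{t-j}(\*Z - I_{np})\|^2 \leq 4\beta^{2(t-j)}$. The error term obeys $\mathbb{E}_{\mathcal{T}_c}[\|\boldsymbol{\epsilon}^j_k\|^2 \mid \mathcal{F}^0_k] = \sum_{i=1}^n \mathbb{E}_{\mathcal{T}_c}[\|\epsilon^j_{i,k}\|^2 \mid \mathcal{F}^0_k] \leq n\sigma_c^2$, again by Assumption~\ref{assum:tc_bound} and the tower property. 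Combining and summing the geometric series gives $\mathbb{E}_{\mathcal{T}_c}[\|\mathcal{E}^c_{t,k}\|^2 \mid \mathcal{F}^0_k] \leq 4n\sigma_c^2 \sum_{j=1}^t \beta^{2(t-j)} = 4n\sigma_c^2 \sum_{m=0}^{t-1} \beta^{2m} \leq \frac{4n\sigma_c^2}{1-\beta^2}$, using $\beta < 1$.

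The only delicate point is the measurability bookkeeping with the nested $\sigma$-algebras $\mathcal{F}^j_k$: one must check that each $\boldsymbol{\epsilon}^j_k$ is conditionally zero-mean given all information produced before consensus round $j$, so that the cross terms drop out and the per-term variance bound applies; this is precisely what the iid-ness in Assumption~\ref{assum:tc_bound} and the independence in Assumption~\ref{assum:iid} provide. The unrolling, the Kronecker/eigenvalue bound on the matrix norm, and the geometric summation are all routine.
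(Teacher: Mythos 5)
Your proposal is correct and follows essentially the same route as the paper: both unroll the consensus recursion to the identity $\mathcal{E}^c_{t,k}=\sum_{j=1}^{t}\*Z^{t-j}(\*Z-I_{np})\boldsymbol{\epsilon}^j_k$ (the paper via a telescoping add-and-subtract, you via a cleaner induction), kill the mean and the cross terms with the tower property over the nested $\sigma$-algebras, bound the diagonal terms by $4\beta^{2(t-j)}n\sigma_c^2$ using the spectral decomposition of $\*W$, and sum the geometric series. No gaps.
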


\begin{proof} 
Let $\tilde{\*Z}:=\*Z-I_{np}$. Setting $\*x^0_k=\*y_k$ and applying (\ref{eq:near_dgd_x}), the error $\mathcal{E}^c_{t,k}$ can be expressed as $\mathcal{E}^{c}_{t,k} = \*x^{t-1}_k + \tilde{\*Z} \*q^t_k - \*Z^t\*x^0_k$. Adding and subtracting the quantity $\sum_{j=1}^{t-1}\left(\*Z^{t-j}\*x_k^{j} \right)=\sum_{j=1}^{t-1}\left(\*Z^{t-j}\left(\*x_k^{j-1} + \tilde{\*Z} \*q^j_k \right) \right)$ (by (\ref{eq:near_dgd_x})) yields,
\begin{equation*}
% \label{eq:lem_comp_err_1}
    \begin{split}
    \mathcal{E}^{c}_{t,k} 
    % &= \*x^{t-1}_k + \left(\*Z-I_{np}\right) \*q^t_k - \*Z^t\*x^0_k\\
    % & = -\left(\*q^t_k-\*x^{t-1}_k\right) + \*Z \*q^t_k -\sum_{j=1}^{t-1}\left(\*Z^{t-j}\*x_k^{j} \right)\\
    % &\quad + \sum_{j=1}^{t-1}\left(\*Z^{t-j}\left(\*x_k^{j-1} + \left(\*Z-I_{np}\right) \*q^j_k \right) \right) - \*Z^t\*x^0_k\\
    & = \tilde{\*Z}\left(\*q^t_k-\*x^{t-1}\right)  -\sum_{j=1}^{t-2}\left(\*Z^{t-j}\*x_k^{j} \right) + \sum_{j=2}^{t-1}\left(\*Z^{t-j}\*x_k^{j-1}\right)+  \sum_{j=2}^{t-1}\left(\*Z^{t-j}\tilde{\*Z} \*q^j_k \right)+\*Z^{t-1}\tilde{\*Z}\left(\*q^1_k-\*x^0_k\right),\\
    % & = \sum_{j=1}^{t} \left(\*Z^{t-j} \left(\*Z-I_{np}\right)\left(\*q^j_k - \*x^{j-1}_k \right)\right),
    \end{split}
\end{equation*}
% where we applied (\ref{eq:near_dgd_x}) to get the first equality and added and subtracted the quantity $\sum_{j=1}^{t-1}\left(\*Z^{t-j}\*x_k^{j}\right)$ 
% \ew{and used \eqref{eq:near_dgd_x} once again} to obtain the second equality. 
where have taken the $(t-1)^{th}$ term out of $-\sum_{j=1}^{t-1}\left(\*Z^{t-j}\*x_k^{j} \right)$ and the $1^{st}$ term out of $\sum_{j=1}^{t-1}\left(\*Z^{t-j}\left(\*x_k^{j-1} + \tilde{\*Z} \*q^j_k \right) \right)$. 
% \ew{The last equality follows from \hi{observing that $\sum_{j=1}^{t-2}\left(\*Z^{t-j}\*x_k^{j} \right)=\sum_{j=2}^{t-1}\left(\*Z^{t-j+1}\*x_k^{j-1} \right)$} and rearranging and combining the terms.} 

 We observe that $\sum_{j=1}^{t-2}\left(\*Z^{t-j}\*x_k^{j} \right)=\sum_{j=2}^{t-1}\left(\*Z^{t-j+1}\*x_k^{j-1} \right)$, and after rearranging and combining the terms of the previous relation we obtain,
\begin{equation}
\label{eq:lem_comp_err_1}
    \begin{split}
    \mathcal{E}^{c}_{t,k} 
    & = \sum_{j=1}^{t} \left(\*Z^{t-j} \tilde{\*Z}\left(\*q^j_k - \*x^{j-1}_k \right)\right).
    \end{split}
\end{equation}
% \ew{Is intentional that the subscript $k$ was dropped for x and q in some of the lines above? The last step is nontrivial, but I can't figure out a better way to explain it. If you can think of any additional steps that would make it easier to follow, please add it.} \hi{dropping the $k$ was a typo, thank you for noticing that.. added an extra step to the proof}

Let $d^j_k = \*q^j_k-\*x^{j-1}_k$. Noticing that $d^j_k=\left[\epsilon^j_{i,k};...;\epsilon^j_{n,k}\right]$ as defined in Assumption~\ref{assum:tc_bound}, it follows that $ \mathbb{E}_{\mathcal{T}_c}\left[d^j_k  \Big| \mathcal{F}^{j-1}_k\right] = \mathbf{0}$ for $1\leq j \leq t$.
% \ew{Check index on $\mathcal{F}^j_k$, I feel it should be $\mathcal{F}^{j-1}_k$, otherwise, given $\mathcal{F}^j_k$, q is deterministic.}
Due to the fact that $\mathcal{F}^{0}_k \subseteq \mathcal{F}^{1}_k \subseteq ... \subseteq \mathcal{F}^{j-1}_k$, applying the tower property of conditional expectation yields,
\begin{equation}
\label{eq:comm_err_1st_mom}
    \begin{split}
         \mathbb{E}_{\mathcal{T}_c}\left[ d^j_k  \Big| \mathcal{F}^0_k\right] &= \mathbb{E}_{\mathcal{T}_c}\left[ \mathbb{E}_{\mathcal{T}_c}\left[ d^j_k \Big| \mathcal{F}^{j-1}_k\right] \Big| \mathcal{F}^0_k\right] = \mathbf{0}.
        %  \text{ $1\leq j \leq t$}.
    \end{split}
\end{equation}
Combining the preceding relation with~\eqref{eq:lem_comp_err_1} and due to the linearity of expectation, we obtain $\mathbb{E}_{\mathcal{T}_c}\left[ \mathcal{E}^c_{t,k}  \big| \mathcal{F}^0_k\right] = \mathbf{0}$. This completes the first part of the proof.
% \begin{equation*}
%   \begin{split}
%         \mathbb{E}_{\mathcal{T}_c}\left[ \mathcal{E}^c_{t,k}  \big| \mathcal{F}^0_k\right] = \mathbf{0}.
%     \end{split}
% \end{equation*}

% The eigenvalues of $\*Z^{t-j}\left(\*Z-I_{np}\right)$ are $\lambda_i^{t-j}\left(\lambda_i-1\right)$, implying that
Let $D^j_k = \*Z^{t-j}\tilde{\*Z} d^j_k$. By the spectral properties of $\*Z$, we have $\left\|\*Z^{t-j}\tilde{\*Z}\right\| = \max_{i>1}|\lambda_i^{t-j}||\lambda_i-1|\leq 2\beta^{t-j}$.
% \ew{Under the max, you meant $i>1$?}
We thus obtain for $1\leq j \leq t$,
\begin{equation}
\label{eq:comm_err_2nd_mom}
    \begin{split}
         \mathbb{E}_{\mathcal{T}_c}&\left[ \left\|D^j_k\right\|^2 \Big| \mathcal{F}^0_k\right] \leq 4\beta^{2(t-j)} \mathbb{E}_{\mathcal{T}_c}\left[ \left\| d^j_k \right\|^2 \Big| \mathcal{F}^0_k\right]\\
         &=4\beta^{2(t-j)}\mathbb{E}_{\mathcal{T}_c}\left[ \mathbb{E}_{\mathcal{T}_c}\left[ \left\| d^j_k\right\|^2 \Big| \mathcal{F}^{j-1}_k\right] \bigg| \mathcal{F}^0_k\right] \\
         &=4\beta^{2(t-j)}\mathbb{E}_{\mathcal{T}_c}\left[  \sum_{i=1}^{n}  \mathbb{E}_{\mathcal{T}_c}\left[ \left\|\epsilon^j_{i,k} \right\|^2  \Big| \mathcal{F}^{j-1}_k\right] \bigg| \mathcal{F}^0_k\right]\\
         &\leq 4\beta^{2(t-j)}n\sigma_c^2,
    \end{split}
\end{equation}
where we derived the second inequality using the tower property of conditional expectation and applied Assumption~\ref{assum:tc_bound} to get the last inequality.

Assumption~\ref{assum:tc_bound} implies that for $i_1 \neq i_2$ and $j_1 \neq j_2$, $\epsilon^{j_1}_{i_1,k}$ and $\epsilon^{j_2}_{i_2,k}$ and by extension $d^{j_1}_k$ and $d^{j_2}_k$ are independent. Eq.~\eqref{eq:comm_err_1st_mom} then yields $\mathbb{E}_{\mathcal{T}_c}\left[\left \langle D^{j_1}_k, D^{j_2}_k \right \rangle\bigg|\mathcal{F}^0_k\right] = \mathbf{0}$. Combining this fact and linearity of expectation
% after expanding the squared norm $\left\|\mathcal{E}^c_{t,k}\right\|^2$ 
yields
$\mathbb{E}_{\mathcal{T}_c} \left[\left\| \mathcal{E}^c_{t,k} \right\|^2 \big| \mathcal{F}^{0}_k \right] =  \mathbb{E}_{\mathcal{T}_c} \left[ \left\|\sum_{j=1}^{t} D^j_k\right\|^2\Bigg| \mathcal{F}^{0}_k \right] = \sum_{j=1}^{t} \mathbb{E}_{\mathcal{T}_c} \left[ \left\| D^j_k\right\|^2\Bigg| \mathcal{F}^{0}_k \right]$.
Applying~\eqref{eq:comm_err_2nd_mom} to this last relation yields,
\begin{equation*}
    \mathbb{E}_{\mathcal{T}_c} \left[\left\| \mathcal{E}^c_{t,k} \right\|^2 \big| \mathcal{F}^{0}_k \right] \leq 4 n \sigma_c^2 \sum_{j=1}^t \beta^{2(t-j)}\leq \frac{4n\sigma_c^2}{1-\beta^2},
\end{equation*}
% \ew{\begin{equation*}
%     \begin{split}
%          \mathbb{E}_{\mathcal{T}_c} \left[\left\| \mathcal{E}^c_{t,k} \right\|^2 \big| \mathcal{F}^{0}_k \right] &=  \mathbb{E}_{\mathcal{T}_c} \left[ \left\|\sum_{j=1}^{t} D^j_k\right\|^2\Bigg| \mathcal{F}^{0}_k \right] \\
%           &= \sum_{j=1}^{t} \mathbb{E}_{\mathcal{T}_c} \left[ \left\| D^j_k\right\|^2\Bigg| \mathcal{F}^{0}_k \right]\\
%          &\leq 4 n \sigma_c^2 \sum_{j=1}^t \beta^{2(t-j)}\\
%          & \leq \frac{4n\sigma_c^2}{1-\beta^2},
%     \end{split}
% \end{equation*}}
where we used $\sum_{j=1}^t \beta^{2(t-j)}=\sum_{j=0}^{t-1} \beta^{2j} = \frac{1-\beta^{2t}}{1-\beta^2}\leq \frac{1}{1-\beta^2}$ to get the last inequality. 
% \ew{The way we bound the series (pretend the sum runs until $\infty$ says we are going to upper bound any NEAR-DGD-t as if we run infinitely many consensus, right? How does this affect our comparison with NEAR-DGD-+ later? I think it's ok, since it's just a factor of $1-\beta^{2t}$ on the denominator, right?}

\end{proof}
%%%%%%%%%%%%%%%%%%%%%%%%%%%%%%%%%%%
%%%%%%%%%%%%%%%%%%%%%%%%%%%%%%%%%%%
\begin{lem}\textbf{(Bounded computation error)}
\label{lem:comp_error}
Let $\mathcal{E}^g_{k} := \*g_{k-1} -\nabla \*f \left( \*x^t_{k-1}\right)$ be the computation error at the $k$-th iteration of S-NEAR-DGD$^t$. Then the following statements hold for all $k\geq1$, 
\begin{equation*}
      \mathbb{E}_{\mathcal{T}_g}\left[\mathcal{E}^{g}_{k} \big| \mathcal{F}^t_{k-1}\right] = \mathbf{0}, \quad \mathbb{E}_{\mathcal{T}_g}\left[\left\|\mathcal{E}^{g}_{k}\right\|^2 \big| \mathcal{F}^t_{k-1} \right] \leq n\sigma_g^2.
\end{equation*}
\end{lem}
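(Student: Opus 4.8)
The plan is to recognize that the computation error $\mathcal{E}^g_k$ is simply the concatenation of the per-node stochastic gradient noise vectors $\zeta_{i,k}$, after which the claim follows directly from Assumption~\ref{assum:tg_bound} together with the iid property of $\mathcal{T}_g$. Concretely, first I would unpack the definitions in the S-NEAR-DGD$^t$ regime (so $t(k-1)=t$): since $\*g_{k-1}$ stacks the inexact local gradients $g_{i,k-1}=\nabla f_i(x^t_{i,k-1})+\zeta_{i,k}$ and $\nabla\*f(\*x^t_{k-1})$ stacks the exact local gradients $\nabla f_i(x^t_{i,k-1})$, subtracting gives $\mathcal{E}^g_k=[\zeta_{1,k};\dots;\zeta_{n,k}]$, which already explains the $n\sigma_g^2$ scaling in the statement.

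For the zero-mean claim, I would observe that each $x^t_{i,k-1}$ is $\mathcal{F}^t_{k-1}$-measurable (by construction the filtration $\mathcal{F}^t_{k-1}$ includes the vector $\*x^t_{k-1}$), while the randomness of $\mathcal{T}_g$ used to form $g_{i,k-1}$ is, by the iid hypothesis on $\mathcal{T}_g$, independent of $\mathcal{F}^t_{k-1}$ conditionally on $x^t_{i,k-1}$. Hence a tower-property step with the coarser $\sigma$-algebra generated by $x^t_{i,k-1}$ gives $\mathbb{E}_{\mathcal{T}_g}[\zeta_{i,k}\mid\mathcal{F}^t_{k-1}]=\mathbb{E}_{\mathcal{T}_g}[\zeta_{i,k}\mid x^t_{i,k-1}]=\mathbf{0}$ by Assumption~\ref{assum:tg_bound}; stacking over $i$ and using linearity of expectation yields $\mathbb{E}_{\mathcal{T}_g}[\mathcal{E}^g_k\mid\mathcal{F}^t_{k-1}]=\mathbf{0}$.

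For the variance bound, since $\|\mathcal{E}^g_k\|^2=\sum_{i=1}^n\|\zeta_{i,k}\|^2$, linearity of expectation reduces the task to bounding each $\mathbb{E}_{\mathcal{T}_g}[\|\zeta_{i,k}\|^2\mid\mathcal{F}^t_{k-1}]$; by the same measurability-plus-iid argument this equals $\mathbb{E}_{\mathcal{T}_g}[\|\zeta_{i,k}\|^2\mid x^t_{i,k-1}]\leq\sigma_g^2$ from Assumption~\ref{assum:tg_bound}, and summing the $n$ contributions gives $\mathbb{E}_{\mathcal{T}_g}[\|\mathcal{E}^g_k\|^2\mid\mathcal{F}^t_{k-1}]\leq n\sigma_g^2$.

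I do not expect a genuine obstacle here: unlike Lemma~\ref{lem:comm_error}, the gradient step is performed only once per outer iteration, so there is no telescoping over consensus rounds and no cross-term cancellation to argue. The only point needing a little care is the justification that conditioning on the full filtration $\mathcal{F}^t_{k-1}$ can be replaced by conditioning on $x^t_{i,k-1}$ alone when invoking Assumption~\ref{assum:tg_bound} --- this is exactly what the iid hypothesis on $\mathcal{T}_g$ provides, and a single tower-property invocation makes it rigorous.
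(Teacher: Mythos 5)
Your proposal is correct and follows essentially the same route as the paper: identify $\mathcal{E}^g_k$ as the stack of the $\zeta_{i,k}$, invoke the zero-mean and bounded-variance properties of Assumption~\ref{assum:tg_bound}, and sum the $n$ per-node contributions for the second moment. Your extra care in justifying the replacement of the full filtration $\mathcal{F}^t_{k-1}$ by conditioning on $x^t_{i,k-1}$ is a slightly more explicit treatment of a step the paper takes implicitly, but it is the same argument.
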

\begin{proof}
% \ew{For the following two paragraphs, keep only one of the two.}

We observe that $\mathcal{E}^{g}_{k} = \left[\zeta_{1,k};...;\zeta_{n,k}\right]$ as defined in  Assumption~\ref{assum:tg_bound}. Due to the unbiasedness of $\mathcal{T}_g\left[\cdot\right]$, we obtain
\begin{equation*}
    \mathbb{E}_{\mathcal{T}_g}\left[\mathcal{E}^{g}_{k}\big| \mathcal{F}^t_{k-1} \right] =  \mathbb{E}_{\mathcal{T}_g}\left[\*g_{k-1} -\nabla \*f \left( \*x^t_{k-1}\right)\big| \mathcal{F}^t_{k-1} \right] = \mathbf{0},
\end{equation*}
% \ew{Why do we define $\mathcal{E}^{g}_{k}$ and $\zeta_{i,{k}}$ using different indices of $k$? Also if $\mathcal{E}^{g}_{k}$ is defined with respect to gradient at $k-1$ then given $\mathcal{F}^t_{k-1}$ there is no uncertainty. Is this what you wanted or given the information set at the previous iteration? Given $\mathcal{F}^t_{k-1}$, the gradient at the current iteration is deterministic, and that should be enough.}
For the magnitude square of $\mathcal{E}^{g}_{k}$ we have,
\begin{equation*}
    \left\|\mathcal{E}^{g}_{k}\right\|^2 = \left\| \*g_{k-1} -\nabla \*f \left( \*x^t_{k-1}\right)\right\|^2=\sum_{i=1}^n\left\|\zeta_{i,k}\right\|^2.
\end{equation*}
Taking the expectation conditional to $\mathcal{F}^{t}_{k-1}$ on both sides of the equation above and using Assumption \ref{assum:tg_bound} establishes the desired results.
\end{proof}

We are now ready to proceed with our main analysis of the convergence properties of S-NEAR-DGD.

\subsection{Main Analysis}
%%%%%%%%%%%%%%%%%%%%%%%%%%%%%%%%%%%
%%%%%%%%%%%%%%%%%%%%%%%%%%%%%%%%%%%
For simplicity, from this point on we will use the notation $\mathbb{E}\left[ \cdot \right]$ to denote the expected value taken over the randomness of both $\mathcal{T}_c$ and $\mathcal{T}_g$. 
%% Commenting the next part out because we stated all assumptions hold at the beginning
% Recall that the errors induced by operator $\mathcal{T}_c\left[\cdot\right]$ and $\mathcal{T}_g\left[\cdot\right]$ \ew{are independent} from\sout{and vice versa} \ew{independence is between two random variables and no need for vice versa} by Assumption~\ref{assum:iid}.
We begin our convergence analysis by proving that the iterates generated by S-NEAR-DGD$^t$ are bounded in expectation in Lemma~\ref{lem:bounded_iterates}. Next, we demonstrate that the distance between the local iterates produced by our method and their average is bounded in Lemma~\ref{lem:bounded_variance}. In Lemma~\ref{lem:descent_avg_iter}, we prove an intermediate result stating that the distance between the average iterates of S-NEAR-DGD$^t$ and the optimal solution is bounded. We then use this result to show the linear of convergence of S-NEAR-DGD$^t$ to a neighborhood of the optimal solution in Theorem~\ref{thm:bounded_dist_min}, and we characterize the size of this error neighborhood in terms of network and problem related quantities and the precision of the stochastic gradients and the noisy communication channel. We prove convergence to a neighborhood of the optimal solution for the local iterates of S-NEAR-DGD$^t$ in Corollary~\ref{cor:local_dist}. We conclude our analysis by proving that the average iterates of S-NEAR-DGD$^+$ converge with geometric rate to an improved error neighborhood compared to S-NEAR-DGD$^t$ in Theorem~\ref{thm:near_dgd_plus}.

% \ew{Then in Theorem ... we show the linear convergence to an error neighborhood of the optimal solution, where we explicitly characcterize the size of the error neighborhood in terms of ... y}

% \ewc{Need something similar to the pink paragraph at the beginning of section III.A to highlight which theorems the readers should pay attention to. Also point to the discussions about the convergence speed and the algorithm parameters, since those are interesting. Also I suggest rename Theorem III.8 and III.10. Their current names "bounded distance" does not suggest convergence. I was looking for a theorem called "linear convergence to an error neighborhood" or something along those lines, but couldnt' find it... It's OK for Lemma III.5 to be called bounded iterates, but the later ones where we actually show convergence should have convergence in their names.}

\begin{lem}\textbf{(Bounded iterates)}
\label{lem:bounded_iterates} Let $\*x_k$ and $\*y_k$ be the iterates generated by S-NEAR-DGD$^t$ ($t(k)=t$ in Eq. (\ref{eq:near_dgd_x}) and (\ref{eq:near_dgd_y})) starting from initial point $\*y_0=\*x_0 \in \mathbb{R}^{np}$ and let the steplength $\alpha$ satisfy
\begin{equation*}
    \alpha < \frac{2}{\mu + L},
\end{equation*}
where $\mu=\min_i \mu_i$ and $L=\max_i L_i$.

Then $\*x_k$ and $\*y_k$ are bounded in expectation for $k\geq1$, i.e.,
\begin{equation*}
    \begin{split}
    \mathbb{E}\left[\left\| \*y_{k}\right\|^2 \right]\leq D + 
      \frac{(1+\kappa)^2n\sigma_g^2}{2L^2}+ \frac{2(1+\kappa)^2 n\sigma_c^2}{\alpha\left(1-\beta^2\right)L^2},
    \end{split}
\end{equation*}
\begin{equation*}
% \label{eq:bound_xk}
    \begin{split}
        \mathbb{E}\left[\left\| \*x^t_k \right\|^2 \right] \leq D + 
      \frac{(1+\kappa)^2n\sigma_g^2}{2L^2}+ \frac{2(1+\kappa)^2 n\sigma_c^2}{\alpha\left(1-\beta^2\right)L^2}  +\frac{4n\sigma_c^2}{1-\beta^2},
    \end{split}
\end{equation*}
% \ew{any reason for not writing the two relations in the same order? It would make it obvious that they are off by one term $nt\sigma_c^2$.}
where $D=2 \mathbb{E}\left[
       \left\|\*y_0 - \*u^\star \right\|^2 \right]+2\left(1+4\nu^{-3}\right)\left\|\*u^\star\right\|^2$, $\*u^\star=[u_1^\star;u_2^\star;...;u_n^\star] \in \mathbb{R}^{np}$,  $u_i^\star = \arg \min_x f_i(x)$, $\nu=\frac{2\alpha\mu L}{\mu+L}$ and $\kappa=L/\mu$ is the condition number of Problem~\eqref{eq:consensus_prob}.
\end{lem}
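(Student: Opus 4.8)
The plan is to measure progress by the squared $L^2$-distance of the iterates to $\*u^\star=\arg\min_{\*x\in\mathbb{R}^{np}}\*f(\*x)=[u_1^\star;\ldots;u_n^\star]$ (the concatenation of the componentwise minimizers, so that $\nabla\*f(\*u^\star)=\*0$), and to derive a single geometric recursion for $a_k:=\mathbb{E}[\|\*y_k-\*u^\star\|^2]$. First I would rewrite one iteration in ``error form'': by Lemma~\ref{lem:comp_error}, $\*y_k=\*x^t_{k-1}-\alpha\nabla\*f(\*x^t_{k-1})-\alpha\,\mathcal{E}^g_k$, and by Lemma~\ref{lem:comm_error}, $\*x^t_k=\*Z^t\*y_k+\mathcal{E}^c_{t,k}$, where $\mathcal{E}^g_k$ and $\mathcal{E}^c_{t,k}$ are conditionally zero-mean with conditional second moments at most $n\sigma_g^2$ and $\tfrac{4n\sigma_c^2}{1-\beta^2}$ respectively. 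Since $\*f$ is $\mu$-strongly convex and $L$-smooth (Lemma~\ref{lem:global_L_mu}) and $\alpha<\tfrac{2}{\mu+L}$, Lemma~\ref{lem:descent_f} gives $\|\*x^t_{k-1}-\alpha\nabla\*f(\*x^t_{k-1})-\*u^\star\|^2\le(1-\nu)\|\*x^t_{k-1}-\*u^\star\|^2$; conditioning on $\mathcal{F}^t_{k-1}$ and using that the gradient-error cross term vanishes yields $\mathbb{E}[\|\*y_k-\*u^\star\|^2\mid\mathcal{F}^t_{k-1}]\le(1-\nu)\|\*x^t_{k-1}-\*u^\star\|^2+\alpha^2 n\sigma_g^2$.

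Next I would handle the consensus step, which is the delicate part: $\*Z^t$ contracts toward the consensus subspace but not toward $\*u^\star$, whose blocks $u_i^\star$ generally differ. Writing $\*x^t_k-\*u^\star=\*Z^t(\*y_k-\*u^\star)+(\*Z^t-I_{np})\*u^\star+\mathcal{E}^c_{t,k}$, conditioning on $\mathcal{F}^0_k$ (so the communication-error cross terms drop), and using $\|\*Z^t\|\le1$ together with $\|(\*Z^t-I_{np})\*u^\star\|\le2\|\*u^\star\|$, a Young-type split $\|a+b\|^2\le(1+\eta)\|a\|^2+(1+\eta^{-1})\|b\|^2$ gives $\mathbb{E}[\|\*x^t_k-\*u^\star\|^2\mid\mathcal{F}^0_k]\le(1+\eta)\|\*y_k-\*u^\star\|^2+4(1+\eta^{-1})\|\*u^\star\|^2+\tfrac{4n\sigma_c^2}{1-\beta^2}$. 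Chaining the two one-step bounds through the tower property ($\mathcal{F}^0_k\subseteq\mathcal{F}^t_k$) produces, for $k\ge1$, $a_{k+1}\le(1-\nu)(1+\eta)\,a_k+4(1-\nu)(1+\eta^{-1})\|\*u^\star\|^2+\tfrac{4(1-\nu)n\sigma_c^2}{1-\beta^2}+\alpha^2 n\sigma_g^2$.

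I would then take $\eta=\nu$, so the contraction factor is exactly $(1-\nu)(1+\nu)=1-\nu^2<1$ while $1+\eta^{-1}=1+\nu^{-1}$ stays controlled. Unrolling from $a_1\le(1-\nu)\mathbb{E}[\|\*y_0-\*u^\star\|^2]+\alpha^2 n\sigma_g^2$ (using the initialization $\*x^t_0=\*y_0$) and dividing the constant term by $1-(1-\nu^2)=\nu^2$ gives a uniform bound on $a_k$; here $\tfrac{4(1-\nu)(1+\nu^{-1})}{\nu^2}=\tfrac{4(1-\nu^2)}{\nu^3}\le 4\nu^{-3}$ is precisely what produces the $\nu^{-3}$ term. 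Finally $\mathbb{E}[\|\*y_k\|^2]\le2a_k+2\|\*u^\star\|^2$ yields the first inequality after substituting the identity $\nu=\tfrac{2\alpha L}{1+\kappa}$ (equivalently $\nu^{-2}=\tfrac{(1+\kappa)^2}{4\alpha^2L^2}$) to put the $\sigma_g^2$ and $\sigma_c^2$ coefficients into the stated form and collecting the $\mathbb{E}[\|\*y_0-\*u^\star\|^2]$ and $\|\*u^\star\|^2$ contributions into $D$. The second inequality is then immediate: from $\*x^t_k=\*Z^t\*y_k+\mathcal{E}^c_{t,k}$ with $\mathbb{E}[\mathcal{E}^c_{t,k}\mid\mathcal{F}^0_k]=\*0$ and $\|\*Z^t\|\le1$ one gets $\mathbb{E}[\|\*x^t_k\|^2]\le\mathbb{E}[\|\*y_k\|^2]+\tfrac{4n\sigma_c^2}{1-\beta^2}$, exactly the extra term in the bound for $\*x^t_k$.

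The main obstacle is the consensus step: because $\*Z^t$ is not a contraction toward $\*u^\star$, the inhomogeneous term $(\*Z^t-I_{np})\*u^\star$ must be carried along, and the Young parameter $\eta$ has to be traded off against the contraction margin $\nu$ of the gradient step --- too large and the recursion no longer contracts, too small and the $1+\eta^{-1}$ factor blows up --- with the balanced choice $\eta=\nu$ responsible for the $\nu^{-3}$ dependence of $D$ on $\|\*u^\star\|^2$. The only other care needed is careful bookkeeping of which $\sigma$-algebra each conditional expectation is taken over together with repeated use of the tower property, exactly as in the proofs of Lemmas~\ref{lem:comm_error} and~\ref{lem:comp_error}.
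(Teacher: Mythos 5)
Your proposal is correct and follows essentially the same route as the paper's proof: the same decomposition $\*x^t_k-\*u^\star=\*Z^t(\*y_k-\*u^\star)-(I-\*Z^t)\*u^\star+\mathcal{E}^c_{t,k}$, the same Young split with parameter $\nu$ giving the $(1-\nu^2)$-contraction and the $\nu^{-3}\|\*u^\star\|^2$ term, the same conditioning/tower-property bookkeeping to kill the error cross terms, and the same final step for $\*x^t_k$. No gaps.
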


\begin{proof}
Consider,
\begin{equation*}
\label{eq:lem_bounded_xy_1}
    \begin{split}
        \left\| \*y_{k+1} - \*u^\star \right\|^2 &=  \left\|\*x_k^{t} - \alpha \*g_k - \*u^\star \right\|^2\\
        &= \left\|\*x_k^{t} - \alpha \nabla \*f(\*x_k^{t}) - \*u^\star  - \alpha  \mathcal{E}^g_{k+1}  \right\|^2 \\
        &= \left\|\*x_k^{t} - \alpha \nabla \*f(\*x_k^{t}) - \*u^\star \right\|^2 + \alpha^2\left\| \mathcal{E}^g_{k+1} \right\|^2 - 2\alpha \left  \langle \*x_k^{t} - \alpha \nabla \*f(\*x_k^{t}) - \*u^\star, \mathcal{E}^g_{k+1}\right \rangle,
    \end{split}
\end{equation*}
where we used (\ref{eq:near_dgd_y}) to get the first equality and added and subtracted $\alpha\nabla\*f\left(\*x^t_k\right)$ and applied the computation error definition $\mathcal{E}^g_{k+1}:=\*g_k - \nabla \*f\left(\*x^t_k\right)$ to obtain the second equality.

% \ew{Check and remove all sentences similar to this}
Taking the expectation conditional to $\mathcal{F}^t_k$ on both sides of
the inequality above
% (\ref{eq:lem_bounded_xy_1}) \ew{if this is the only place you reference \eqref{eq:lem_bounded_xy_1}, then feel free to use above relation or above equation and eliminate the number. Try to use minimal number of equations would make the paper appear less intimidate. So try to go through the paper and eliminate unnecessary equation numbers and only keep them if you need to refer to them after a few other equations in between.} 
and applying Lemma~\ref{lem:comp_error} yields,
\begin{equation}
\label{eq:lem_bounded_xy_2}
    \begin{split}
        \mathbb{E}\left[\left\| \*y_{k+1} - \*u^\star \right\|^2 \Big| \mathcal{F}^t_k \right]&\leq \left\|\*x_k^{t} - \alpha \nabla \*f(\*x_k^{t}) - \*u^\star \right\|^2 + \alpha^2 n \sigma_g^2.
    \end{split}
\end{equation}
For the first term on the right-hand side of (\ref{eq:lem_bounded_xy_2}), after combining Lemma~\ref{lem:descent_f}
with Lemma~\ref{lem:global_L_mu} and due to $\alpha < \frac{2}{\mu+L}$ we acquire,
\begin{equation*}
\label{eq:lem_bounded_xy_3}
   \left\|\*x_k^{t} - \alpha \nabla \*f(\*x_k^{t}) - \*u^\star  \right\|^2 \leq (1-\nu)\left\|\*x_k^{t}-\*u^\star\right\|^2,
\end{equation*}
where $\nu=\frac{2\alpha \mu L}{\mu + L}=\frac{2\alpha L}{1+\kappa}<1$.

Expanding the term on the right hand side of the above relation yields,
\begin{equation*}
\label{eq:lem_bounded_xy_4}
    \begin{split}
      \left\|\*x_k^{t}-\*u^\star\right\|^2 &= \left\|\mathcal{E}^{c}_{t,k} + \*Z^t\*y_k -\*Z^t\*u^\star + \*Z^t\*u^\star - \*u^\star\right\|^2\\
      &= \left\|\mathcal{E}^{c}_{t,k}\right\|^2 + \left\|\*Z^t\left(\*y_k - \*u^\star\right) - \left(I-\*Z^t\right)\*u^\star\right\|^2 + 2\left \langle \mathcal{E}^{c}_{t,k}, \*Z^t\*y_k - \*u^\star  \right \rangle \\
      &\leq \left\|\mathcal{E}^{c}_{t,k}\right\|^2 + \left(1+\nu\right)\left\|\*Z^t\left(\*y_k - \*u^\star\right)\right\|^2 +\left(1+\nu^{-1}\right)\left\| \left(I-\*Z^t\right)\*u^\star\right\|^2 + 2\left \langle \mathcal{E}^{c}_{t,k}, \*Z^t\*y_k - \*u^\star \right \rangle\\
      &\leq \left\|\mathcal{E}^{c}_{t,k}\right\|^2 + \left(1+\nu\right)\left\|\*y_k - \*u^\star\right\|^2 +4\left(1+\nu^{-1}\right)\left\|\*u^\star\right\|^2 + 2\left \langle \mathcal{E}^{c}_{t,k}, \*Z^t\*y_k - \*u^\star  \right \rangle,
    \end{split}
\end{equation*}
where we added and subtracted the quantities $\*Z^t\*y_k$ and $\*Z^t\*u^\star$ and applied the communication error definition $\mathcal{E}^{c}_{t,k}:=\*x^t_k - \*Z^t\*y_k$ to get the first equality. We used the standard inequality $\pm2\langle a,b\rangle \leq c\|a\|^2 + c^{-1}\|b\|^2$ that holds for any two vectors $a,b$ and positive constant $c>0$ to obtain the first inequality.
% \ew{This would be for the first inequality (not third) or the third line.}
Finally, we derived the last inequality using the relations $\left\|\*Z^t \right\|=1$ and $\left\|I-\*Z^t \right\|<2$ that hold due to Assumption~\ref{assum:consensus_mat}.

Due to the fact that $\mathcal{F}^0_k \subseteq \mathcal{F}^t_k$, combining the preceding three relations and taking the expectation conditional on $\mathcal{F}^0_k$ on both sides of~\eqref{eq:lem_bounded_xy_2} yields,
 \begin{equation*}
     \begin{split}
        \mathbb{E}\Big[\big\| \*y_{k+1} - \*u^\star \big\|^2 \Big| \mathcal{F}^0_k \Big] &\leq \left(1-\nu^2\right) 
        \left\|\*y_k - \*u^\star \right\|^2 + 
        \alpha^2n\sigma_g^2 \\
        &\quad+  (1-\nu)\mathbb{E}\left[\left\|\mathcal{E}^c_{t,k}\right\|^2\big|\mathcal{F}^0_k\right]+ 4\nu^{-1}\left(1-\nu^2\right)\left\|\*u^\star\right\|^2+  2(1-\nu)\mathbb{E}\left[\left \langle \mathcal{E}^{c}_{t,k}, \*Z^t\*y_k - \*u^\star  \right \rangle \big|\mathcal{F}^0_k\right]\\
        &\leq \left(1-\nu^2\right) 
        \left\|\*y_k - \*u^\star \right\|^2 + 
        \alpha^2n\sigma_g^2 +  (1-\nu)\frac{4n\sigma_c^2}{1-\beta^2}+ 4\nu^{-1}\left(1-\nu^2\right)\left\|\*u^\star\right\|^2,
    \end{split}
 \end{equation*}
 where we applied Lemma~\ref{lem:comm_error} to get the last inequality.

%  applying Lemma~\ref{lem:comm_error} and using the tower property of expectation (notice that ) yields, \ew{This step involves too many steps, add one more intermediate steps.}
% \begin{equation*}
% \label{eq:lem1_0}
%     \begin{split}
%         \mathbb{E}\left[\left\| \*y_{k+1} - \*u^\star \right\|^2 \Big | \mathcal{F}^0_k\right]
%         &\leq \left(1-\nu^2\right) 
%         \left\|\*y_k - \*u^\star \right\|^2 + 
%         \alpha^2n\sigma_g^2 +  (1-\nu)nt\sigma_c^2\\
%         &\quad + 4\nu^{-1}\left(1-\nu^2\right)\left\|\*u^\star\right\|^2.
%     \end{split}
% \end{equation*}
Taking the total expectation on both sides of the relation above and applying recursively over iterations $0, 1, \ldots, k$ yields,
\begin{equation*}
\label{eq:lem1_1}
    \begin{split}
        \mathbb{E}\Big[\big\| \*y_{k} - \*u^\star \big\|^2\Big]
        &\leq \left(1-\nu^2\right)^k \mathbb{E}\left[
       \left\|\*y_0 - \*u^\star \right\|^2 \right]+
        \alpha^2\nu^{-2}n\sigma_g^2 +  (\nu^{-2}-\nu^{-1})\frac{4n\sigma_c^2}{1-\beta^2} + 4\left(\nu^{-3}-\nu^{-1}\right)\left\|\*u^\star\right\|^2\\
        &\leq \mathbb{E}\left[
       \left\|\*y_0 - \*u^\star \right\|^2 \right]+
        \alpha^2\nu^{-2}n\sigma_g^2 +  \frac{4\nu^{-2}n\sigma_c^2}{1-\beta^2} + 4\nu^{-3}\left\|\*u^\star\right\|^2,
        \end{split}
\end{equation*}
where we used $\sum_{h=0}^{k-1}\left(1-\nu^2\right)^h\leq \nu^{-2}$ to get the first inequality 
% the fact that \ew{ $\sum_{h=0}^k-1 (1-\nu)^h\leq \nu^2$ to derive the first inequality and}
and $\nu > 0$ to get the second inequality.
 % \ew{Do we always have $\nu<1$? We need that to use the geometric series and we should justify that.}
 
Moreover, the statement $\left\| \*y_{k}\right\|^2 = \left\| \*y_{k} - \*u^\star + \*u^\star \right\|^2\leq 2\left\| \*y_{k} - \*u^\star \right\|^2+2\left\| \*u^\star \right\|^2$ trivially holds.
% \begin{equation*}
% \begin{split}
%     \left\| \*y_{k}\right\|^2 &= \left\| \*y_{k} - \*u^\star + \*u^\star \right\|^2\\
%     &\leq 2\left\| \*y_{k} - \*u^\star \right\|^2+2\left\| \*u^\star \right\|^2.
%     \end{split}
% \end{equation*}
% \ew{Need to specify what expectation we are taking} \hi{not sure what this comment means?} \ew{You say many times in the paper something like "take expectation on both side, we obtain", it should read "take expectation on both side of blah". Need to complete the sentence.}
Taking the total expectation on both sides of this relation, yields,
\begin{equation}
\label{eq:bound_yk}
\begin{split}
    \mathbb{E}\Big[\big\| \*y_{k}\big\|^2 \Big]& \leq 2\mathbb{E}\left[\left\| \*y_{k} - \*u^\star \right\|^2\right]+2\left\| \*u^\star \right\|^2\\
    &\leq 2 \mathbb{E}\left[
       \left\|\*y_0 - \*u^\star \right\|^2 \right]+ 
       2 \alpha^2\nu^{-2}n\sigma_g^2 + \frac{8\nu^{-2}n\sigma_c^2}{1-\beta^2}+ 2\left(1+4\nu^{-3}\right)\left\|\*u^\star\right\|^2.
    \end{split}
\end{equation}
Applying the definitions of $D$ and $\kappa$ to (\ref{eq:bound_yk}) yields the first result of this lemma.

Finally, the following statement also holds 
\begin{equation*}
    \begin{split}
        \left\| \*x^t_k \right\|^2 &= \left\| \mathcal{E}^{c}_{t,k} + \*Z^t\*y_k\right\|^2\\
        &= \left\| \mathcal{E}^{c}_{t,k}\right\|^2 +\left\| \*Z^t\*y_k\right\|^2 + 2\left \langle\mathcal{E}^{c}_{t,k}, \*Z^t\*y_k \right \rangle\\
         &\leq \left\| \mathcal{E}^{c}_{t,k}\right\|^2 +\left\| \*y_k\right\|^2  + 2\left \langle\mathcal{E}^{c}_{t,k}, \*Z^t\*y_k \right \rangle,
    \end{split}
\end{equation*}
where we used the non-expansiveness of $\*Z$ for the last inequality.

Taking the expectation conditional  on $\mathcal{F}^0_k$ on both sides of the preceding relation and applying Lemma \ref{lem:comm_error} yields,
\begin{equation*}
\label{eq:bound_xk}
    \begin{split}
        \mathbb{E}\left[\left\| \*x^t_k \right\|^2 \Big | \mathcal{F}^0_k \right]  &\leq  \frac{4n\sigma_c^2}{1-\beta^2} +\left\| \*y_k\right\|^2.
    \end{split}
\end{equation*}
Taking the total expectation on both sides of the relation above, applying (\ref{eq:bound_yk})  and the definitions of $D$, $\nu$ and $\kappa$ concludes this proof.

\end{proof}
%%%%%%%%%%%%%%%%%%%%%%%%%%%%%%%%%%%
%%%%%%%%%%%%%%%%%%%%%%%%%%%%%%%%%%%

We will now use the preceding lemma to prove that the distance between the local and the average iterates generated by S-NEAR-DGD$^t$ is bounded. This distance can be interpreted as a measure of consensus violation, with small values indicating small disagreement between nodes.
% \ew{usually consensus means all agree, so there is no "tight consensus". Consider "small disagreement" or "close to consensus" or other expressions}.

\begin{lem}
\textbf{(Bounded distance to average)}
\label{lem:bounded_variance}
Let $x_{i,k}^t$ and $y_{i,k}$ be the local iterates produced by S-NEAR-DGD$^t$ at node $i$ and iteration $k$ and let $\bar{x}_k^t:=\sum_{i=1}^n x^t_{i,k}$ and $\bar{y}_k:=\sum_{i=1}^n y_{i,k}$ denote the average iterates across all nodes. Then the distance between the local and average iterates is bounded in expectation for all $i=1,...,n$ and $k=1,2,...$, namely,
\begin{equation*}
\begin{split}
    &\mathbb{E}\Big[\big\|x^t_{i,k} - \bar{x}^t_k \big\|^2\Big] \leq \mathbb{E}\left[\left\|\*x^t_k - \*M \*x^t_k \right\|^2\right] \leq \beta^{2t}D + 
      \frac{\beta^{2t}(1+\kappa)^2n\sigma_g^2}{2L^2}+ \frac{2\beta^{2t}(1+\kappa)^2n\sigma_c^2}{\alpha^2\left(1-\beta^2\right) L^2} +\frac{4n\sigma_c^2}{1-\beta^2},
     \end{split}
\end{equation*}
and
\begin{equation*}
\begin{split}
    &\mathbb{E} \Big[\big\|y_{i,k} - \bar{y}_k \big\|^2 \Big]\leq \mathbb{E}\left[ \left\|\*y_k - \*M \*y_k \right\|^2 \right]\leq D + 
      \frac{(1+\kappa)^2n\sigma_g^2}{2L^2}+ \frac{2(1+\kappa)^2n\sigma_c^2}{\alpha^2\left(1-\beta^2\right) L^2},
    \end{split}
\end{equation*}
where 
% \hi{$\beta = \max\left\{|\lambda_2|,|\lambda_n|\right\}$ and $\lambda_n \leq ... \leq \lambda_2 < \lambda_1=1$ are the eigenvalues of $\*W$ in ascending order,}
% \ew{absolute value of the eigenvalue with the second largest absolute value} \sout{second largest magnitude eigenvalue} \ew{the original writing was looking for the eigenvalue with the second largest magnitude, so could be negative. Also since Z is real and symmetric, all eigenvalues are real and symmetric, no need to use magnitude, could just use absolute value.} of $\*Z$, 
$\*M = \left( \frac{1_n 1_n^T}{n} \otimes I_p \right) \in \mathbb{R}^{np}$ is the averaging matrix, constant $D$ is defined in Lemma~\ref{lem:bounded_iterates}, $\kappa=L/\mu$ is the condition number of Problem~\eqref{eq:consensus_prob}, $L=\max_i L_i$ and $\mu = \min_i L_i$. 
% \ew{need to define what $1_n$ means}
 
\end{lem}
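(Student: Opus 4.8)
The plan is to use the decomposition $\mathbf{x}^t_k = \mathbf{Z}^t\mathbf{y}_k + \mathcal{E}^c_{t,k}$ (the defining identity of the communication error in Lemma~\ref{lem:comm_error}) together with the spectral gap of $\mathbf{Z}$. Two elementary observations come first. Since $\mathbf{M}\mathbf{x}^t_k$ simply stacks $\bar{x}^t_k$ at every node, $\|\mathbf{x}^t_k - \mathbf{M}\mathbf{x}^t_k\|^2 = \sum_{i=1}^n \|x^t_{i,k} - \bar{x}^t_k\|^2$, so a single summand is dominated by the full sum and the first inequality in each displayed bound is immediate; the same remark handles $\mathbf{y}_k$. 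Next, because $\mathbf{W}$ is symmetric and doubly stochastic, $\mathbf{Z}\mathbf{M} = \mathbf{M}\mathbf{Z} = \mathbf{M}$, hence $\mathbf{Z}^t\mathbf{M} = \mathbf{M}$ and $(I-\mathbf{M})\mathbf{Z}^t = \mathbf{Z}^t - \mathbf{M} = (\mathbf{Z}^t - \mathbf{M})(I-\mathbf{M})$, with $\|\mathbf{Z}^t - \mathbf{M}\| = \beta^t$ (on the consensus subspace $\mathbf{Z}^t$ and $\mathbf{M}$ both act as the identity so $\mathbf{Z}^t-\mathbf{M}$ vanishes there, while on its orthogonal complement every eigenvalue of $\mathbf{Z}$ has magnitude at most $\beta$ and $\mathbf{M}$ acts as zero). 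Also $I-\mathbf{M}$ is an orthogonal projection, so $\|I-\mathbf{M}\| = 1$.

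Given this, the bound on $\mathbf{y}_k$ is one line: $\|\mathbf{y}_k - \mathbf{M}\mathbf{y}_k\|^2 = \|(I-\mathbf{M})\mathbf{y}_k\|^2 \le \|\mathbf{y}_k\|^2$, and taking total expectation and invoking the bound on $\mathbb{E}[\|\mathbf{y}_k\|^2]$ from Lemma~\ref{lem:bounded_iterates} yields the second claim.

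For $\mathbf{x}^t_k$, apply $I-\mathbf{M}$ to $\mathbf{x}^t_k = \mathbf{Z}^t\mathbf{y}_k + \mathcal{E}^c_{t,k}$ to get $(I-\mathbf{M})\mathbf{x}^t_k = (\mathbf{Z}^t-\mathbf{M})(I-\mathbf{M})\mathbf{y}_k + (I-\mathbf{M})\mathcal{E}^c_{t,k}$, expand the square, and take the expectation conditional on $\mathcal{F}^0_k$. Since $\mathbf{y}_k$ is $\mathcal{F}^0_k$-measurable and $\mathbb{E}[\mathcal{E}^c_{t,k}\mid\mathcal{F}^0_k] = \mathbf{0}$ by Lemma~\ref{lem:comm_error}, the cross term vanishes, leaving $\mathbb{E}[\|(I-\mathbf{M})\mathbf{x}^t_k\|^2\mid\mathcal{F}^0_k] \le \beta^{2t}\|(I-\mathbf{M})\mathbf{y}_k\|^2 + \mathbb{E}[\|\mathcal{E}^c_{t,k}\|^2\mid\mathcal{F}^0_k] \le \beta^{2t}\|\mathbf{y}_k\|^2 + \tfrac{4n\sigma_c^2}{1-\beta^2}$, using $\|I-\mathbf{M}\|\le 1$ and the variance bound of Lemma~\ref{lem:comm_error}. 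Taking total expectation and substituting the $\mathbb{E}[\|\mathbf{y}_k\|^2]$ bound from Lemma~\ref{lem:bounded_iterates} gives the first claim, with $\beta^{2t}$ multiplying every term inherited from $\mathbf{y}_k$ and the bare $\tfrac{4n\sigma_c^2}{1-\beta^2}$ left over from the quantization noise injected in the final consensus round.

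The only real subtlety, and the step I would be most careful about, is the spectral bookkeeping: verifying that $I-\mathbf{M}$ interacts with $\mathbf{Z}^t$ exactly so that the $\beta^{2t}$ contraction attaches to the $\mathbf{y}_k$-terms, and recognizing that the communication-error contribution cannot inherit this contraction because the last round's quantization error enters after all mixing. Everything else is bounding-and-substitution using only Lemmas~\ref{lem:comm_error} and~\ref{lem:bounded_iterates}; no new estimates are required.
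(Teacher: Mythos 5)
Your proposal is correct and follows essentially the same route as the paper: decompose $\*x^t_k = \*Z^t\*y_k + \mathcal{E}^c_{t,k}$, project with $I-\*M$, use $\left\|\*Z^t-\*M\right\|=\beta^t$ and $\left\|I-\*M\right\|=1$ to attach the $\beta^{2t}$ contraction to the $\*y_k$-terms, kill the cross term by conditioning on $\mathcal{F}^0_k$ via the zero-mean property in Lemma~\ref{lem:comm_error}, and then substitute the bounds of Lemmas~\ref{lem:comm_error} and~\ref{lem:bounded_iterates}; the $\*y_k$ bound is the same one-line projection argument. Your explicit identity $(I-\*M)\*Z^t=(\*Z^t-\*M)(I-\*M)$ is a slightly cleaner way of organizing the same spectral bookkeeping the paper carries out by adding and subtracting $\*M\*y_k$.
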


\begin{proof}
%  \ew{W is symmetric, so Z has real eigenvalues. We should point this out, otherwise if it has complex eigenvalues, the second largest one is not well defined. We could also define $\beta = \max_{i, \lambda_i\neq 1} |\lambda_i(Z)|$, where $\lambda_i(Z)$ denote the eigevenvalues of matrix $Z$.}

% \sout{To begin with, notice} \ew{We first note } \ew{try not to use to many imperative sentences. The flow is not as nice.} that the following equality holds by the definitions of $\*x^t_k$, $\bar{x}^t_k$ and the matrix $\*M$,
% \begin{equation*}
%     \*x^t_k - \*M \*x^t_k =  \*x^t_k - 1^T_n \otimes \bar{x}^t_k = [x^t_{1,k}-\bar{x}^t_k,...,x^t_{n,k}-\bar{x}^t_k],
% \end{equation*}
% which implies
% \begin{equation}
% \label{eq:lem_bounded_diff_x}
%     \left\|x^t_{i,k}-\bar{x}^t_k \right\|^2 \leq \left\| \*x^t_k - \*M \*x^t_k \right\|^2,\forall i=1,...,n.
% \end{equation}
Observing that $\sum_{i=1}^n \left\|x^t_{i,k}-\bar{x}^t_k \right\|^2 = \left\| \*x^t_k - \*M \*x^t_k \right\|^2$, we obtain,
\begin{equation}
\label{eq:lem_bounded_diff_x}
    \left\|x^t_{i,k}-\bar{x}^t_k \right\|^2 \leq \left\| \*x^t_k - \*M \*x^t_k \right\|^2,\quad i=1,...,n.
\end{equation}
% \ew{Could also consider writing $\sum_{i=1}^n \left\|x^t_{i,k}-\bar{x}^t_k \right\|^2 = \left\| \*x^t_k - \*M \*x^t_k \right\|^2$.}
We can bound the right-hand side of (\ref{eq:lem_bounded_diff_x}) as
\begin{equation}
\label{eq:lem2_x1}
    \begin{split}
        \left\|\*x^t_k - \*M \*x_k\right\|^2 &=  \left\|\mathcal{E}^{c}_{t,k} + \*Z^t\*y_k- \*M \*x_k - \*M\*y_k + \*M\*y_k \right\|^2\\
        &=\left\|\mathcal{E}^{c}_{t,k} + \left(\*Z^t-\*M\right)\*y_k- \*M\*x_k +\*M \*Z^t\*y_k \right\|^2\\
        &=\left\|\left(I-\*M\right)\mathcal{E}^{c}_{t,k} \right\|^2+\left\| \left(\*Z^t-\*M\right)\*y_k \right\|^2  + 2\left \langle\left(I-\*M\right)\mathcal{E}^{c}_{t,k} ,\left(\*Z^t-\*M\right)\*y_k  \right \rangle\\
        &\leq \left\|\mathcal{E}^{c}_{t,k} \right\|^2+\beta^{2t}\left\|\*y_k \right\|^2 + 2\left \langle\left(I-\*M\right)\mathcal{E}^{c}_{t,k} ,\left(\*Z^t-\*M\right)\*y_k  \right \rangle,
    \end{split}
\end{equation}
where we applied the definition of the communication error $\mathcal{E}^c_{t,k}$ of Lemma~\ref{lem:comm_error}
% have added and subtracted the quantities $\*Z^t\*y_k$ \ew{did we really add and subtract $\*Z^t\*y_k$ or used definition of $\mathcal{E}^c_{t,k}$? Talking about which, it would be good to pull the definition of $\mathcal{E}^c_{t,k}$ outside of Lemma III.4 or point to Lemma III.4 whenever we use it. } 
and added and subtracted $\*M\*y_k$ to obtain the first equality. We used the fact that $\*M \*Z^t = \*M$ to get the second equality.
% \ew{nice trick!}. \ew{Shouldn't the third line be equality?} 
We derive the last inequality from Cauchy-Schwarz and the spectral properties of $\*Z^t = \*W^t \otimes I_p$ and $\*M = \left(\frac{1_n 1_n^T}{n}\right) \otimes I_p$; both $\*W^t$ and $\frac{1_n 1_n^T}{n}$ have a maximum eigenvalue at $1$ associated with the eigenvector $1_n$, implying that the null space of $\*W^t - \frac{1_n 1_n^T}{n}$ is parallel to $1_n$ and $\left\|\*Z^t-\*M\right\|=\left\|\*W^t-\frac{1_n 1_n^T}{n}\right\|=\beta^t$. 

% \ew{the last two steps need some  explanation. Like in the eigenspace associated with vector of all one's something happens and the orthogonal space, something else happens...}

Taking the expectation conditional to $\mathcal{F}^0_k$ on both sides of~(\ref{eq:lem2_x1}) and applying Lemma~\ref{lem:comm_error} yields,
\begin{equation*}
    \begin{split}
       \mathbb{E}\left[ \left\|\*x^t_k - \*M \*x_k\right\|^2 \Big | \mathcal{F}^0_k \right] &\leq \frac{4n\sigma_c^2}{1-\beta^2} +\beta^{2t}\left\|\*y_k \right\|^2.
    \end{split}
\end{equation*}
Taking the total expectation on both sides and applying Lemma~\ref{lem:bounded_iterates} yields the first result of this lemma.

Similarly, the following inequality holds for the $\*y_k$ iterates,
\begin{equation}
\label{eq:lem_bounded_diff_y}
    \left\|y_{i,k}-\bar{y}_k \right\|^2 \leq \left\| \*y_k - \*M \*y_k \right\|^2,\quad i=1,...,n.
\end{equation}
For the right-hand side of (\ref{eq:lem_bounded_diff_y}), we have,
\begin{equation*}
% \label{eq:lem2_y1}
    \begin{split}
        \left\|\*y_k - \*M\*y_k \right\|^2 &= \left\| \left(I-\*M\right)\*y_k\right\|^2\leq \left\| \*y_k \right\|^2,
\end{split}
\end{equation*}
where we have used the fact that $\left\|I-\*M\right\|=1$.

Taking the total expectation on both sides and applying Lemma~\ref{lem:bounded_iterates} concludes this proof.

\end{proof}
%%%%%%%%%%%%%%%%%%%%%%%%%%%%%%%%%%%
%%%%%%%%%%%%%%%%%%%%%%%%%%%%%%%%%%%
The bounds established in Lemma~\ref{lem:bounded_variance} indicate that there are at least three factors preventing the local iterates produced by S-NEAR-DGD$^t$ from reaching consensus: errors related to network connectivity, represented by $\beta$, and errors caused by the inexact computation process and the noisy communication channel associated with the constants $\sigma_g$ and $\sigma_c$ respectively.

Before presenting our main theorem, we state one more intermediate result on the distance of the average $\bar{y}_k$ iterates to the solution of Problem~(\ref{eq:consensus_prob}). 

\begin{lem}
\textbf{(Bounded distance to minimum)}
\label{lem:descent_avg_iter}
Let $\bar{y}_k:=\frac{1}{n}\sum_{i=1}^n y_{i,k}$ denote the average of the local $y_{i,k}$ iterates generated by S-NEAR-DGD$^t$ under steplength $\alpha$ satisfying
\begin{equation*}
    \alpha < \frac{2}{\mu_{\bar{f}}+L_{\bar{f}}},
\end{equation*}
where $\mu_{\bar{f}}=\frac{1}{n}\sum_{i=1}^n\mu_i$ and $L_{\bar{f}}=\frac{1}{n}\sum_{i=1}^n L_i$.

Then the following inequality holds for $k=1,2,...$
\begin{equation*}
\label{eq:dist_y_bar_x_star}
    \begin{split}
       \mathbb{E}\left[ \left\| \bar{y}_{k+1} - x^\star\right\|^2 \Big | \mathcal{F}^t_k \right]
      &\leq  \rho  \left\| \bar{x}^t_{k} - x^\star\right\|^2 + \frac{\alpha^2\sigma_g^2}{n} +  \frac{\alpha \rho L^2 \Delta_{\*x}}{n\gamma_{\bar{f}}},
    \end{split}
\end{equation*}
where $x^\star = \arg\min_x f(x)$, $\rho=1-\alpha\gamma_{\bar{f}}$, $\gamma_{\bar{f}}=\frac{\mu_{\bar{f}}L_{\bar{f}}}{\mu_{\bar{f}}+L_{\bar{f}}}$, $L=\max_i L_i$, $\Delta_{\*x}=\left\|\*x^t_k - \*M \*x^t_k\right\|^2$ and $\*M = \left( \frac{1_n 1_n^T}{n} \otimes I_p \right) \in \mathbb{R}^{np}$ is the averaging matrix.
\end{lem}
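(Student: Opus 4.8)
The plan is to reduce the evolution of the average iterate $\bar{y}_{k+1}$ to a noisy, perturbed gradient step on the averaged objective $\bar f$, and then invoke the deterministic contraction of Lemma~\ref{lem:descent_f}. First I would start from the averaged recursion~\eqref{eq:near_dgd_y_avg}, namely $\bar{y}_{k+1} = \bar{x}^t_k - \alpha\bar{g}_k$, and decompose $\bar{g}_k = \frac{1}{n}\sum_{i=1}^n\nabla f_i(x^t_{i,k}) + \bar\zeta_{k+1}$ with $\bar\zeta_{k+1}:=\frac{1}{n}\sum_{i=1}^n\zeta_{i,k+1}$. Subtracting $x^\star = \arg\min_x \bar f(x) = \arg\min_x f(x)$, taking $\mathbb{E}[\cdot\,|\,\mathcal{F}^t_k]$, and using that $\bar x^t_k$ and $\frac{1}{n}\sum_i\nabla f_i(x^t_{i,k})$ are $\mathcal{F}^t_k$-measurable while $\mathbb{E}[\bar\zeta_{k+1}\,|\,\mathcal{F}^t_k]=0$ (Assumption~\ref{assum:tg_bound}), the cross term vanishes; by the iid property in Assumption~\ref{assum:tg_bound}, $\mathbb{E}[\|\bar\zeta_{k+1}\|^2\,|\,\mathcal{F}^t_k]=\frac{1}{n^2}\sum_i\mathbb{E}[\|\zeta_{i,k+1}\|^2\,|\,\mathcal{F}^t_k]\le\sigma_g^2/n$, which yields the $\frac{\alpha^2\sigma_g^2}{n}$ term.

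It remains to bound the deterministic quantity $\bigl\|\bar x^t_k - \alpha\,\frac{1}{n}\sum_i\nabla f_i(x^t_{i,k}) - x^\star\bigr\|^2$. Here I would add and subtract $\alpha\nabla\bar f(\bar x^t_k)$ and split via the Young-type inequality $\|a+b\|^2\le(1+\eta)\|a\|^2+(1+\eta^{-1})\|b\|^2$, with $a:=\bar x^t_k - \alpha\nabla\bar f(\bar x^t_k) - x^\star$ and $b:=\alpha\bigl(\nabla\bar f(\bar x^t_k) - \frac{1}{n}\sum_i\nabla f_i(x^t_{i,k})\bigr)$. Since $\bar f$ is $\mu_{\bar f}$-strongly convex and $L_{\bar f}$-smooth by Lemma~\ref{lem:global_L_mu}, and $\alpha<\frac{2}{\mu_{\bar f}+L_{\bar f}}$, Lemma~\ref{lem:descent_f} applied to $\bar f$ gives $\|a\|^2\le(1-2\alpha\gamma_{\bar f})\|\bar x^t_k-x^\star\|^2$. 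For $\|b\|^2$, I would write $\nabla\bar f(\bar x^t_k) - \frac{1}{n}\sum_i\nabla f_i(x^t_{i,k}) = \frac{1}{n}\sum_i\bigl(\nabla f_i(\bar x^t_k)-\nabla f_i(x^t_{i,k})\bigr)$, apply Jensen's inequality and the local Lipschitz bounds of Assumption~\ref{assum:lip}, and use $L=\max_i L_i$ to obtain $\|b\|^2\le\frac{\alpha^2 L^2}{n}\sum_i\|\bar x^t_k - x^t_{i,k}\|^2 = \frac{\alpha^2 L^2}{n}\|\*x^t_k-\*M\*x^t_k\|^2 = \frac{\alpha^2 L^2}{n}\Delta_{\*x}$.

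Finally I would choose the free parameter $\eta = \frac{\alpha\gamma_{\bar f}}{1-2\alpha\gamma_{\bar f}}$, which is positive because $\alpha<\frac{2}{\mu_{\bar f}+L_{\bar f}}$ forces $2\alpha\gamma_{\bar f}<1$ (via AM--GM, $\frac{4\mu_{\bar f}L_{\bar f}}{(\mu_{\bar f}+L_{\bar f})^2}\le 1$). With this choice $(1+\eta)(1-2\alpha\gamma_{\bar f}) = 1-\alpha\gamma_{\bar f}=\rho$ and $1+\eta^{-1} = \frac{\rho}{\alpha\gamma_{\bar f}}$, so the perturbation contribution becomes $\frac{\rho}{\alpha\gamma_{\bar f}}\cdot\frac{\alpha^2 L^2}{n}\Delta_{\*x} = \frac{\alpha\rho L^2\Delta_{\*x}}{n\gamma_{\bar f}}$, and assembling the three pieces reproduces the claimed bound exactly. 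I expect the main hurdle to be this final tuning step --- calibrating Young's inequality so that the contraction factor lands precisely at $\rho = 1-\alpha\gamma_{\bar f}$ while the disagreement term acquires the stated coefficient --- together with the conceptual point that $\frac{1}{n}\sum_i\nabla f_i(x^t_{i,k})$ is \emph{not} $\nabla\bar f(\bar x^t_k)$ because of local iterate disagreement, which is exactly what forces $\Delta_{\*x}$ into the estimate.
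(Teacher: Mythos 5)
Your proposal is correct and follows essentially the same route as the paper's proof: the same decomposition of $\bar g_k$ into $\frac{1}{n}\sum_i\nabla f_i(x^t_{i,k})$ plus noise, the same add-and-subtract of $\alpha\nabla\bar f(\bar x^t_k)$ with Young's inequality tuned by the parameter $\frac{\alpha\gamma_{\bar f}}{1-2\alpha\gamma_{\bar f}}$ (the paper's $\tilde\rho$), the same application of Lemma~\ref{lem:descent_f} to $\bar f$, and the same Lipschitz bound turning the gradient mismatch into $\frac{L^2}{n}\Delta_{\*x}$. No gaps.
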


\begin{proof}
Applying ($\ref{eq:near_dgd_y_avg}$) to $(k+1)^{th}$ iteration we obtain,
\begin{equation*}
\begin{split}
\bar{y}_{k+1} = \bar{x}_k^{t} - \alpha \overline{g}_k,
    \end{split}
\end{equation*}
where $\bar{g}_k = \frac{1}{n}\sum_{i=1}^n g_{i,k} =  \frac{1}{n}\sum_{i=1}^n \left(\zeta_{i,k+1}+ \nabla f_i\left(x^t_{i,k}\right)\right)$. 
% \ew{Great! This kind of detailed explanation is very important to have for ease of understanding.}
Let $h_k = \frac{1}{n}\sum_{i=1}^n \nabla f_i\left(x^t_{i,k}\right)$. Adding and subtracting $\alpha h_k$ to the right-hand side of the preceding relation and taking the square norm on both sides yields,
\begin{equation*}
\begin{split}
    \left\|\bar{y}_{k+1} - x^\star\right\|^2 &= \left\|\bar{x}^t_{k} -\alpha h_k - x^\star\right\|^2 + \alpha^2 \left\| h_k  -\bar{g}_k \right\|^2  + 2\alpha \left \langle  \bar{x}^t_{k} -\alpha h_k - x^\star, h_k -\bar{g}_k \right \rangle\\
    &= \left\|\bar{x}^t_{k} -\alpha h_k - x^\star\right\|^2 + \frac{\alpha^2}{n^2} \left\| \sum_{i=1}^n \zeta_{i,k+1} \right\|^2 - \frac{2\alpha }{n}\sum_{i=1}^n\left \langle  \bar{x}^t_{k} -\alpha h_k - x^\star, \zeta_{i,k+1} \right \rangle.
    \end{split}
\end{equation*}
Moreover, let $\tilde{\rho}=\frac{\alpha\gamma_{\bar{f}}}{1-2\alpha\gamma_{\bar{f}}} > 0$. We can re-write the first term on the right-hand side of the inequality above as,
\begin{equation*}
    \begin{split}
        \Big\|\bar{x}^t_{k} -\alpha h_k - x^\star\Big\|^2 &\leq \left(1 + \tilde{\rho}\right) \left\|\bar{x}^t_{k} -\alpha\nabla \bar{f}\left(\bar{x}^t_k\right) - x^\star\right\|^2 + \alpha^2 \left(1+\tilde{\rho}^{-1}\right) \left\|h_k -\nabla \bar{f}\left(\bar{x}^t_k\right)\right\|^2\\
        &\leq \left(1 -\alpha \gamma_{\bar{f}}\right) \left\|\bar{x}^t_{k} - x^\star\right\|^2 + \alpha^2 \left(1+\tilde{\rho}^{-1}\right) \left\|h_k - \nabla \bar{f}\left(\bar{x}^t_k\right)\right\|^2,
    \end{split}
\end{equation*}
where we added and subtracted the quantity $\alpha \nabla \bar{f}\left(\bar{x}^t_k\right)$ and used the relation $\pm 2\langle a,b\rangle \leq  c\|a\|^2+c^{-1}\|b\|^2$ that holds for any two vectors $a,b$ and positive constant $c$ to obtain the first inequality. We derive the second inequality after combining Lemmas~\ref{lem:global_L_mu} and~\ref{lem:descent_f} that hold due to $\alpha < \frac{2}{\mu_{\bar{f}}+L_{\bar{f}}}$ and $x^\star=\arg \min_x \bar{f}(x)$.

We notice that $\mathbb{E}\left[ \zeta_{i,k+1} \big| \mathcal{F}^t_k\right]=\mathbf{0}$ and that $\mathbb{E}\left[ \left\|\sum_{i=1}^n \zeta_{i,k+1}\right\|^2 \big|\mathcal{F}^t_k\right] =  \mathbb{E}\left[\sum_{i=1}^n \left\|\zeta_{i,k+1}\right\|^2\big| \mathcal{F}^t_k\right]
   +\mathbb{E}\left[ \sum_{i_1\neq i_2} \left\langle \zeta_{i_1,k+1},\zeta_{i_2,k+1} \right\rangle \big| \mathcal{F}^t_k\right] \leq n\sigma_g^2$ due to Assumption~\ref{assum:tg_bound} and the linearity of expectation. Combining all of the preceding relations and taking the expectation conditional on $\mathcal{F}^t_k$, yields,
\begin{equation}
\label{eq:lem_bounded_min100}
    \begin{split}
       \mathbb{E} \Big[\big\| &\bar{y}_{k+1}  - x^\star \big\|^2 \big| \mathcal{F}^t_k \Big] = \left(1-\alpha \gamma_{\bar{f}}\right)\left\|\bar{x}^t_k - x^\star\right\|+ \alpha^2 \left(1+\tilde{\rho}^{-1}\right) \left\|h_k -  \nabla \bar{f}\left(\bar{x}^t_k\right)\right\|^2+ \frac{\alpha^2 \sigma_g^2}{n}
    \end{split}
\end{equation}

Finally, for any set of vectors $v_i \in \mathbb{R}^p$, $i=1,...,n$ we have $  \left\|\sum_{i=1}^n v_i \right\|^2 = \sum_{h=1}^p \left(\sum_{i=1}^n \left[v_i \right]_h \right)^2
        \leq n \sum_{h=1}^p \sum_{i=1}^n \left[v_i \right]_h^2 
         = n \sum_{i=1}^n \left\|v_i\right\|^2,$
% \begin{equation*}
%     \begin{split}
%         \left\|\sum_{i=1}^n v_i \right\|^2 & = \sum_{h=1}^p \left(\sum_{i=1}^n \left[v_i \right]_h \right)^2 \\
%         &\leq n \sum_{h=1}^p \sum_{i=1}^n \left[v_i \right]_h^2 \\
%         & = n \sum_{i=1}^n \left\|v_i\right\|^2,
%     \end{split}
% \end{equation*}
where we used the fact that $\pm 2 a  b \leq a^2 + b^2$ for any pair of scalars $a,b$ to get the first inequality and reversed the order of summation to get the last equality.
% Finally, using Cauchy-Schwarz \ew{inequality} we obtain the following result \ew{add parentheses and write out one more step for how Cauchy-Schwarz inequality was used}
We can use this result to obtain,
\begin{equation*}
\label{eq:lem_bounded_min103}
\begin{split}
    \Big\|h_k -  \nabla \bar{f}\left(\bar{x}^t_k\right)\Big\|^2 &=\frac{1}{n^2}\left\|\sum_{i=1}^n \left(\nabla f_i \left(x^t_{i,k}\right)-\nabla f_i\left(\bar{x}^t_k\right)\right)\right\|^2\\
    &\leq \frac{n}{n^2} \sum_{i=1}^n \left\|\nabla f_i \left(x^t_{i,k}\right)-\nabla f_i\left(\bar{x}^t_k\right)\right\|^2\\
    & \leq \frac{L^2 }{n}  \sum_{i=1}^n \left\|x^t_{i,k}-\bar{x}^t_k\right\|^2
\\
&= \frac{L^2}{n}   \left\|\*x^t_k - \*M \*x^t_k\right\|^2,
    \end{split}
\end{equation*}
where we used Assumption~\ref{assum:lip} to get the second inequality.

Substituting the immediately previous relation in (\ref{eq:lem_bounded_min100}), observing $1+\tilde{\rho}^{-1}=\left(1-\alpha\gamma_{\bar{f}}\right)/\alpha\gamma_{\bar{f}}$ and applying the definition of $\rho$ yields the final result.

\end{proof}
%%%%%%%%%%%%%%%%%%%%%%%%%%%%%%%%%%%
%%%%%%%%%%%%%%%%%%%%%%%%%%%%%%%%%%%

We have now obtained all necessary results to prove the convergence of S-NEAR-DGD$^t$ to a neighborhood of the optimal solution in the next theorem. 

\begin{thm}
\textbf{(Convergence of S-NEAR-DGD$^t$)}
\label{thm:bounded_dist_min}
Let $\bar{x}^t_k:=\frac{1}{n}\sum_{i=1}^n x^t_{i,k}$ denote the average of the local $x^t_{i,k}$ iterates generated by S-NEAR-DGD$^t$ from initial point $\*y_0$
% \ew{Matlab notation is not standard notation... Did we ever define $\*y_0$? If so, no need to define it here. It's better to group the definitions near the algorithm.}
and let the steplength $\alpha$ satisfy,
\begin{equation*}
    \alpha < \min \left \{\frac{2}{\mu+L},\frac{2}{\mu_{\bar{f}}+L_{\bar{f}}} \right\},
\end{equation*}
where $\mu=\min_i L_i$, $L = \max_i L_i$, $\mu_{\bar{f}}=\frac{1}{n}\sum_{i=1}^n\mu_i$ and $L_{\bar{f}}=\frac{1}{n}\sum_{i=1}^n L_i$.

Then the distance of $\bar{x}^t_k$ to the optimal solution $x^\star$ of Problem~(\ref{eq:consensus_prob}) is bounded in expectation for $k=1,2,...$,
\begin{equation}
\label{eq:thm_xk1}
    \begin{split}
    &\mathbb{E}\left[\left\|\bar{x}^t_{k+1} - x^\star\right\|^2  \right]
    \leq \rho \mathbb{E}\left[ \left\| \bar{x}^t_{k} - x^\star\right\|^2\right] + \frac{\alpha\beta^{2t}\rho L^2D}{n\gamma_{\bar{f}}} + \frac{\alpha^2\sigma_g^2  }{n} +  \frac{\alpha\beta^{2t}\left(1+\kappa\right)^2\rho\sigma_g^2}{2\gamma_{\bar{f}}} + \frac{4\alpha\rho L^2\sigma_c^2}{\left(1-\beta^2\right)\gamma_{\bar{f}}} +  \frac{2\beta^{2t}\left(1+\kappa\right)^{2}\rho \sigma_c^2}{\alpha\left(1-\beta^2\right)\gamma_{\bar{f}}},
    \end{split}
\end{equation}
and
\begin{equation}
\label{eq:thm_xk2}
    \begin{split}
    &\mathbb{E}\left[\left\|\bar{x}^t_{k} - x^\star\right\|^2  \right]
    \leq  \rho^k \mathbb{E}\left[ \left\| \bar{x}_0 - x^\star\right\|^2\right] + \frac{\beta^{2t}\rho L^2D}{n\gamma_{\bar{f}}^2} + \frac{\alpha \sigma_g^2  }{n\gamma_{\bar{f}}}  +  \frac{\beta^{2t} \left(1+\kappa \right)^2\rho\sigma_g^2}{2\gamma_{\bar{f}}^2}+ \frac{4\rho L^2\sigma_c^2}{\left(1-\beta^2\right)\gamma_{\bar{f}}^2} +  \frac{2\beta^{2t}\left(1+\kappa\right)^2\rho \sigma_c^2}{\alpha^2\left(1-\beta^2\right)\gamma_{\bar{f}}^2},
    \end{split}
\end{equation}
where $\bar{x}_0=\frac{1}{n}\sum_{i=1}^n y_{i,0}$, $\rho=1-\alpha\gamma_{\bar{f}}$, $\gamma_{\bar{f}}=\frac{\mu_{\bar{f}}L_{\bar{f}}}{\mu_{\bar{f}}+L_{\bar{f}}}$, $\kappa=L/\mu$ is the condition number of Problem~\eqref{eq:consensus_prob} and the constant $D$ is defined in Lemma~\ref{lem:bounded_iterates}.
% \ew{Since we use $\beta$ across multiple lemmas/theorems, it's better to pull the definition outside.}
\end{thm}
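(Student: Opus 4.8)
The plan is to turn the one-step inequality of Lemma~\ref{lem:descent_avg_iter} into a recursion purely in $\mathbb{E}\left[\|\bar{x}^t_k - x^\star\|^2\right]$ and then unroll it geometrically. The key observation is that the nested consensus rounds preserve the network average: by~\eqref{eq:near_dgd_x_avg}, $\bar{x}^t_{k+1} = \bar{x}^0_{k+1} = \bar{y}_{k+1}$, so Lemma~\ref{lem:descent_avg_iter} applied at iteration $k$ already controls $\|\bar{x}^t_{k+1} - x^\star\|^2$ in terms of $\|\bar{x}^t_k - x^\star\|^2$ and the consensus-violation quantity $\Delta_{\*x} = \|\*x^t_k - \*M\*x^t_k\|^2$, which is $\mathcal{F}^t_k$-measurable. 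Taking total expectation and using the tower property gives
\[
\mathbb{E}\left[\|\bar{x}^t_{k+1} - x^\star\|^2\right] \le \rho\,\mathbb{E}\left[\|\bar{x}^t_k - x^\star\|^2\right] + \frac{\alpha^2\sigma_g^2}{n} + \frac{\alpha\rho L^2}{n\gamma_{\bar{f}}}\,\mathbb{E}\left[\|\*x^t_k - \*M\*x^t_k\|^2\right].
\]

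The next step is to substitute into the last term the bound on $\mathbb{E}\left[\|\*x^t_k - \*M\*x^t_k\|^2\right]$ from Lemma~\ref{lem:bounded_variance}, which holds uniformly in $k$ and which requires $\alpha < 2/(\mu+L)$ (via Lemma~\ref{lem:bounded_iterates}). Multiplying that four-term estimate by $\alpha\rho L^2/(n\gamma_{\bar{f}})$ and collecting terms reproduces exactly the right-hand side of~\eqref{eq:thm_xk1}; this is pure bookkeeping, and the main thing to verify is that the spurious factors of $n$ and $L^2$ cancel (for instance the $\beta^{2t}(1+\kappa)^2 n\sigma_g^2/(2L^2)$ term of Lemma~\ref{lem:bounded_variance} becomes $\beta^{2t}(1+\kappa)^2\rho\sigma_g^2/(2\gamma_{\bar{f}})$ after the multiplication).

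To obtain~\eqref{eq:thm_xk2} I would iterate~\eqref{eq:thm_xk1} from iteration $0$ to $k$, using $\bar{x}^t_0 = \bar{x}_0$ for the initial term. The second part of the steplength condition, $\alpha < 2/(\mu_{\bar{f}}+L_{\bar{f}})$, which is exactly what Lemma~\ref{lem:descent_avg_iter} needs, ensures $0 < \rho = 1-\alpha\gamma_{\bar{f}} < 1$, so the homogeneous part contributes $\rho^k\mathbb{E}\left[\|\bar{x}_0 - x^\star\|^2\right]$ while each constant error term is amplified by $\sum_{h=0}^{k-1}\rho^h \le 1/(1-\rho) = 1/(\alpha\gamma_{\bar{f}})$; multiplying each term of~\eqref{eq:thm_xk1} by $1/(\alpha\gamma_{\bar{f}})$ then yields precisely the corresponding term of~\eqref{eq:thm_xk2}.

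There is no deep obstacle here: the difficult estimates — the gradient-noise term (Assumption~\ref{assum:tg_bound}), the quantization-induced communication error (Lemma~\ref{lem:comm_error}), the boundedness of the iterates (Lemma~\ref{lem:bounded_iterates}), and the consensus gap (Lemma~\ref{lem:bounded_variance}) — have all already been dispatched. The only points requiring attention are invoking~\eqref{eq:near_dgd_x_avg} so that the recursion closes on a single scalar sequence, and carrying the many constants carefully through the substitution and the geometric summation so that the coefficients in~\eqref{eq:thm_xk1} and~\eqref{eq:thm_xk2} come out exactly as stated.
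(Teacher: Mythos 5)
Your proposal is correct and follows essentially the same route as the paper's proof: use \eqref{eq:near_dgd_x_avg} to identify $\bar{x}^t_{k+1}$ with $\bar{y}_{k+1}$, apply Lemma~\ref{lem:descent_avg_iter} conditionally on $\mathcal{F}^t_k$, take total expectation and substitute the bound of Lemma~\ref{lem:bounded_variance} for the consensus gap, then unroll geometrically with $\sum_{h=0}^{k-1}\rho^h\leq(\alpha\gamma_{\bar{f}})^{-1}$. The only nit is a dropped factor of $\alpha$ in your illustrative remark about the $\sigma_g^2$ term, which does not affect the argument.
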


\begin{proof}
Applying ($\ref{eq:near_dgd_x_avg}$) to the $(k+1)^{th}$ iteration yields,
% \ew{be specific, what does "averaging" mean?} 
\begin{equation*}
\begin{split}
    \bar{x}^j_{k+1} &=  \bar{x}^{j-1}_{k+1}\text{, }j=1,...,t,
    \end{split}
\end{equation*}
which in turn implies that $\bar{x}^j_{k+1}=\bar{y}_{k+1}$ for $j=1,...,t$.

Hence, the relation $\left\|\bar{x}^t_{k+1} - x^\star\right\|^2 =\left\| \bar{y}_{k+1}- x^\star\right\|^2$ holds.
% \begin{equation*}
% \label{eq:lem_bounded_min200}
%     \begin{split}
%     \left\|\bar{x}^t_{k+1} - x^\star\right\|^2 &=\left\| \bar{y}_{k+1}- x^\star\right\|^2.
%     \end{split}
% \end{equation*}
Taking the expectation conditional to $\mathcal{F}^t_{k}$  on both sides of this equality and applying Lemma~\ref{lem:descent_avg_iter} yields,
 \begin{equation*}
    \begin{split}
    \mathbb{E}\left[\left\|\bar{x}^t_{k+1} - x^\star\right\|^2 \Big| \mathcal{F}^t_{k}\right] 
    &\leq   \rho  \left\| \bar{x}^t_{k} - x^\star\right\|^2 + \frac{\alpha^2\sigma_g^2}{n}+  \frac{\alpha \rho L^2 \Delta_{\*x}}{n\gamma_{\bar{f}}},
    \end{split}
\end{equation*}
where $\Delta_{\*x}=\left\|\*x^t_k - \*M \*x^t_k\right\|^2$.

Taking the total expectation on both sides of the relation above and applying Lemma~\ref{lem:bounded_variance} yields,
\begin{equation}
\label{eq:lem_bounded_min6}
    \begin{split}
    &\mathbb{E}\left[\left\|\bar{x}^t_{k+1} - x^\star\right\|^2  \right]
    \leq \rho \mathbb{E}\left[ \left\| \bar{x}^t_{k} - x^\star\right\|^2\right] + \frac{\alpha\beta^{2t}\rho L^2D}{n\gamma_{\bar{f}}}  + \frac{\alpha^2 \sigma_g^2  }{n} +  \frac{\alpha\beta^{2t}(1+\kappa)^{2}\rho \sigma_g^2}{2\gamma_{\bar{f}}} + \frac{4\alpha\rho L^2\sigma_c^2}{\gamma_{\bar{f}}\left(1-\beta^2\right)} +  \frac{2\beta^{2t}(1+\kappa)^{2}\rho \sigma_c^2}{\alpha\left(1-\beta^2\right)\gamma_{\bar{f}}}.
    \end{split}
\end{equation}
We notice that $\rho<1$ and after applying \eqref{eq:lem_bounded_min6} recursively and then using the bound $\sum_{h=0}^{k-1} \rho^h \leq (1-\rho)^{-1}$ we obtain,
\begin{equation*}
    \begin{split}
    \mathbb{E}\left[\left\|\bar{x}^t_{k} - x^\star\right\|^2  \right]
    &\leq \rho^k \mathbb{E}\left[ \left\| \bar{x}_0- x^\star\right\|^2\right] + \frac{\alpha\beta^{2t}\rho L^2D}{n\gamma_{\bar{f}}(1-\rho)} + \frac{\alpha^2 \sigma_g^2  }{n(1-\rho)} \\
    &\quad +  \frac{\alpha\beta^{2t}(1+\kappa)^{2}\rho \sigma_g^2}{2\gamma_{\bar{f}}(1-\rho)} + \frac{4\alpha\rho L^2\sigma_c^2}{\gamma_{\bar{f}}\left(1-\beta^2\right)(1-\rho)} +  \frac{2\beta^{2t}(1+\kappa)^{2}\rho \sigma_c^2}{\alpha\left(1-\beta^2\right)\gamma_{\bar{f}}(1-\rho)}.
    \end{split}
\end{equation*}
Applying the definition of $\rho$ completes the proof.

\end{proof}

Theorem~\ref{thm:bounded_dist_min} indicates that the average iterates of S-NEAR-DGD$^t$ converge in expectation to a neighborhood of the optimal solution $x^\star$ of Problem~(\ref{eq:prob_orig}). We have quantified the dependence of this neighborhood on the connectivity of the network and the errors due to imperfect computation and communication through the terms containing the quantities $\beta$, $\sigma_g$ and $\sigma_c$, respectively. 
% \ew{Very good discussion. Is the inverse scaling with n related to so-called variance reduction?} 
We observe that the $2^{nd}$ and $3^{rd}$ error terms in~\eqref{eq:thm_xk2} scale favorably with the number of nodes $n$, yielding a variance reduction effect proportional to network size. Our bounds indicate that higher values of the steplength $\alpha$ yield faster convergence rates $\rho$. On the other hand, $\alpha$ has a mixed effect on the size of the error neighborhood; the $2^{nd}$ term in~\eqref{eq:thm_xk2} is associated with inexact computation and increases with $\alpha$, while the last term in~\eqref{eq:thm_xk2} is associated with noisy communication and decreases with $\alpha$. 
% Our bounds indicate that large choices for the steplength $\alpha$ yield both faster rates 
% and smaller error neighborhoods (measured by the upper bound of the errors), although in practice there is usually a trade-off between accuracy and convergence speed. 
The size of the error neighborhood increases with the condition number $\kappa$ as expected, while the dependence on the algorithm parameter $t$ indicates that performing additional consensus steps mitigates the error due to network connectivity and the errors induced by the operators $\mathcal{T}_g\left[\cdot\right]$ and $\mathcal{T}_c\left[\cdot\right]$.

In the next corollary, we will use Theorem~\ref{thm:bounded_dist_min} and Lemmas~\ref{lem:descent_avg_iter} and~\ref{lem:bounded_iterates} to show that the the local iterates produced by S-NEAR-DGD$^t$ also have bounded distance to the solution of Problem~(\ref{eq:prob_orig}). 

\begin{cor}
\textbf{(Convergence of local iterates (S-NEAR-DGD$^t$)}
\label{cor:local_dist}
Let $x^t_{i,k}$ and $y_{i,k}$ be the local iterates generated by S-NEAR-DGD$^t$ at node $i$ and iteration $k$ from initial point $\*x_0=\*y_0=[y_{1,0};...;y_{n,0}] \in \mathbb{R}^{np}$ and let the steplength $\alpha$ satisfy 
\begin{equation*}
    \alpha < \min \left \{\frac{2}{\mu+L},\frac{2}{\mu_{\bar{f}}+L_{\bar{f}}} \right\},
\end{equation*}
where $\mu=\min_i\mu_i$, $L=\max_i L_i$, $\mu_{\bar{f}}=\frac{1}{n}\sum_{i=1}^n\mu_i$ and $L_{\bar{f}}=\frac{1}{n}\sum_{i=1}^n L_i$.

Then for $i=1,...,n$ and $k\geq1$ the distance of the local iterates to the solution of Problem~(\ref{eq:consensus_prob}) is bounded, i.e.
% \begin{equation*}
%     \begin{split}
%         \mathbb{E}\left[\left \|x^t_{i,k} - x^\star \right\|^2 \right]
%         \leq &2\rho^k\mathbb{E} \left[\left\| \bar{x}_0 - x^\star\right\|^2\right] \\
%         &\quad + \frac{2}{n}\left(n+\frac{\rho L^2}{\gamma_{\bar{f}}^2}\right)\beta^{2t}D +\frac{2\rho\sigma_g^2}{n\gamma_{\bar{f}}^2} \\
%         &\quad+ \frac{\left(1+\kappa\right)^2}{L^2}\left(n+\frac{\rho L^2}{\gamma_{\bar{f}}^2}\right)\beta^{2t}\sigma_g^2 \\
%         &\quad + \frac{8}{1-\beta^2}\left(n+\frac{\rho L^2}{\gamma_{\bar{f}}^2}\right)\sigma_c^2\\
%       &\quad + \frac{4(1+\kappa)^2}{\alpha^2\left(1-\beta^2\right)L^2}\left(n+ \frac{\rho L^2}{\gamma_{\bar{f}}^2}\right)\beta^{2t}\sigma_c^2,
%       \end{split}
% \end{equation*}
\begin{equation*}
    \begin{split}
        \mathbb{E}\Big[\big \|x^t_{i,k} - x^\star \big\|^2 \Big]
        &\leq 2\rho^k\mathbb{E} \left[\left\| \bar{x}_0 - x^\star\right\|^2\right] + 2\beta^{2t}\left(1+\frac{C}{n}\right)D + \frac{2\alpha\sigma_g^2}{n\gamma_{\bar{f}}}\\
        &\quad+ \frac{\beta^{2t}\left(1+\kappa\right)^2\left(n+C\right)\sigma_g^2}{L^2}  + \frac{8(n+C)\sigma_c^2}{1-\beta^2}+
        \frac{4\beta^{2t}(1+\kappa)^2(n+C)\sigma_c^2}{\alpha^2(1-\beta^2)L^2}, 
      \end{split}
\end{equation*}
%%%%%% INLINE %%%%%%%%%%%
%     $\mathbb{E}\left[\left \|x^t_{i,k} - x^\star \right\|^2 \right]
%         \leq 2\rho^k\mathbb{E} \left[\left\| \bar{x}_0 - x^\star\right\|^2\right] 
%         + \frac{2}{n}\left(n+\frac{\rho L^2}{\gamma_{\bar{f}}^2}\right)\beta^{2t}D +\frac{2\rho\sigma_g^2}{n\gamma_{\bar{f}}^2} 
%         + \frac{\left(1+\kappa\right)^2}{L^2}\left(n+\frac{\rho L^2}{\gamma_{\bar{f}}^2}\right)\beta^{2t}\sigma_g^2 
%          + \frac{8}{1-\beta^2}\left(n+\frac{\rho L^2}{\gamma_{\bar{f}}^2}\right)\sigma_c^2
%   + \frac{4(1+\kappa)^2}{\alpha^2\left(1-\beta^2\right)L^2}\left(n+ \frac{\rho L^2}{\gamma_{\bar{f}}^2}\right)\beta^{2t}\sigma_c^2,
%       $
%%%%%%%%%%%%%%%%%%%%%%%%%%%%%%%
% \begin{equation*}
%     \begin{split}
%         \mathbb{E}\Big[&\big \|y_{i,k+1} - x^\star \big\|^2 \Big]
%         \leq  2\rho^k \mathbb{E}\left[ \left\|  \bar{x}_0 - x^\star\right\|^2\right] +2D + \frac{2\beta^{2t}\rho L^2D}{n\gamma_{\bar{f}}^2} \\
%         &\quad + \frac{2\rho \sigma_g^2  }{n\gamma_{\bar{f}}^2} +  
%       \frac{2(1+\kappa)^2n\sigma_g^2}{2L^2} + \frac{\beta^{2t} \left(1+\kappa \right)^2\rho\sigma_g^2}{\gamma_{\bar{f}}^2}\\
%       &\quad+ \frac{8\rho L^2\sigma_c^2}{\left(1-\beta^2\right)\gamma_{\bar{f}}^2} + \frac{4(1+\kappa)^2n\sigma_c^2}{\alpha^2\left(1-\beta^2\right) L^2}+ \frac{4\beta^{2t}\left(1+\kappa\right)^2\rho \sigma_c^2}{\alpha^2\left(1-\beta^2\right)\gamma_{\bar{f}}^2},
%         \end{split}
% \end{equation*}
and,
\begin{equation*}
    \begin{split}
        \mathbb{E}\Big[\big \|y_{i,k} - x^\star \big\|^2 \Big]
        &\leq  2\rho^k \mathbb{E}\left[ \left\|  \bar{x}_0 - x^\star\right\|^2\right] +2\left(1  + \frac{\beta^{2t}C}{n} \right)D  + \frac{2\alpha \sigma_g^2 }{n\gamma_{\bar{f}}} \\
        &\quad+   \frac{(1+\kappa)^2\left(n  + \beta^{2t}C \right)\sigma_g^2 }{L^2} + \frac{8 C \sigma_c^2}{1-\beta^2}  +  \frac{ 4(1+\kappa)^2\left(n  + \beta^{2t}C \right)\sigma_c^2}{\alpha^2(1-\beta^2) L^2},
        \end{split}
\end{equation*}
where $C =\frac{ \rho L^2}{\gamma_{\bar{f}}^2}$, $\rho=1-\alpha\gamma_{\bar{f}}$, $\gamma_{\bar{f}}=\frac{\mu_{\bar{f}}L_{\bar{f}}}{\mu_{\bar{f}}+L_{\bar{f}}}$, $\bar{x}_0=\sum_{i=1}^n y_{i,0}$ and the constant $D$ is defined in Lemma~\ref{lem:bounded_iterates}.
\end{cor}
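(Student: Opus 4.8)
The plan is to control each local iterate by splitting its distance to $x^\star$ into a consensus-violation part and an averaged-iterate part, and then invoke the bounds already proved. For the $x$-iterates I would start from the elementary inequality $\|x^t_{i,k} - x^\star\|^2 = \|(x^t_{i,k} - \bar x^t_k) + (\bar x^t_k - x^\star)\|^2 \le 2\|x^t_{i,k} - \bar x^t_k\|^2 + 2\|\bar x^t_k - x^\star\|^2$, take total expectation, bound the first summand using the per-node estimate $\|x^t_{i,k} - \bar x^t_k\|^2 \le \|\*x^t_k - \*M\*x^t_k\|^2$ together with the first inequality of Lemma~\ref{lem:bounded_variance}, and bound the second summand by~\eqref{eq:thm_xk2} in Theorem~\ref{thm:bounded_dist_min}. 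For the $y$-iterates I would do the same with $\|y_{i,k} - x^\star\|^2 \le 2\|y_{i,k} - \bar y_k\|^2 + 2\|\bar y_k - x^\star\|^2$, using the second inequality of Lemma~\ref{lem:bounded_variance} for the first term; for the second term I would use that the averaging recursion~\eqref{eq:near_dgd_x_avg} gives $\bar x^t_k = \bar x^0_k = \bar y_k$, so $\|\bar y_k - x^\star\|^2 = \|\bar x^t_k - x^\star\|^2$ is again controlled by~\eqref{eq:thm_xk2}. (Equivalently, one may re-derive the bound on $\mathbb{E}[\|\bar y_k - x^\star\|^2]$ directly from Lemma~\ref{lem:descent_avg_iter} combined with Lemmas~\ref{lem:bounded_variance} and~\ref{lem:bounded_iterates}; either route uses the same ingredients.)

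With these substitutions in hand, the only remaining work is to collect like terms. I would group separately the $D$-terms, the geometrically decaying $\rho^k$-term, the $\sigma_g^2$-terms, and the $\sigma_c^2$-terms, and introduce the shorthand $C := \rho L^2/\gamma_{\bar f}^2$, using the identities $\rho/\gamma_{\bar f}^2 = C/L^2$ and $\beta^{2t}\rho L^2 D/(n\gamma_{\bar f}^2) = \beta^{2t}CD/n$ to rewrite the $\beta^{2t}$-weighted contributions coming from Theorem~\ref{thm:bounded_dist_min}. For the $x$-bound this produces the prefactor $2\beta^{2t}(1+C/n)$ on $D$ and the coefficient $(n+C)$ on the $\sigma_g^2$ and $\sigma_c^2$ terms, since both the consensus-violation part (Lemma~\ref{lem:bounded_variance}, first inequality) and the averaged-iterate part (Theorem~\ref{thm:bounded_dist_min}) carry the same $\beta^{2t}$ factor on their network-connectivity contributions. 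For the $y$-bound the consensus-violation part contributes the $n$-weighted, $\beta^{2t}$-free pieces (the $\*y_k$ estimate in Lemma~\ref{lem:bounded_variance} has no $\beta^{2t}$), while the averaged-iterate part contributes the $\beta^{2t}C$-weighted pieces, which is exactly what gives the stated $(n+\beta^{2t}C)$ coefficients and the factor $2(1+\beta^{2t}C/n)$ on $D$.

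I do not anticipate a genuine conceptual obstacle: every ingredient — the $\|a+b\|^2 \le 2\|a\|^2+2\|b\|^2$ split, the per-node reduction to $\|\*x^t_k - \*M\*x^t_k\|^2$, the identity $\bar y_k = \bar x^t_k$, and the two previously established bounds — is already available. The one step that requires care is the bookkeeping: making sure the $\beta^{2t}$ factors land only on the network-connectivity contributions and not on the residual $\tfrac{4n\sigma_c^2}{1-\beta^2}$-type term of Lemma~\ref{lem:bounded_variance} nor on the $\tfrac{\alpha^2\sigma_g^2}{n}$-type term that originates purely from the stochastic gradient at the averaged level, and that the substitution $C = \rho L^2/\gamma_{\bar f}^2$ is applied consistently across all terms. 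This is the part I would verify most carefully, but it is routine algebra rather than a real difficulty.
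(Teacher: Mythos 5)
Your proposal is correct and follows exactly the paper's own argument: the $\|a+b\|^2\le 2\|a\|^2+2\|b\|^2$ split into consensus-violation and averaged-iterate parts, Lemma~\ref{lem:bounded_variance} for the former, Theorem~\ref{thm:bounded_dist_min} (via $\bar y_k=\bar x^t_k$ for the $y$-bound) for the latter, and the substitution $C=\rho L^2/\gamma_{\bar f}^2$. Your bookkeeping of where the $\beta^{2t}$ factors land also checks out against the stated coefficients.
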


\begin{proof}
For all $i \in \{1,...n\}$ and $k\geq1$, the following relation holds for the $x^t_{i,k}$ iterates,
\begin{equation}
\label{eq:lem_dist_min_local1}
    \begin{split}
        \left \|x^t_{i,k} - x^\star \right\|^2 &=  \left \|x^t_{i,k} -\bar{x}^t_k + \bar{x}^t_k - x^\star \right\|^2 \\
        & \leq  2\left \|x^t_{i,k} -\bar{x}^t_k\right\|^2 +2\left\| \bar{x}^t_k - x^\star \right\|^2,
    \end{split}
\end{equation}
where we added and subtracted $\bar{x}^t_k$ to get the first equality.

Taking the total expectation on both sides of (\ref{eq:lem_dist_min_local1}) and applying Lemma~\ref{lem:bounded_variance}, Theorem~\ref{thm:bounded_dist_min} and the definition of $C$ yields the first result of this corollary.

Similarly, for the $y_{i,k}$ local iterates we have,
% \ew{Did you want to k or k+1? Your proof was for k+1 and your result was for k. Also when I tried to plug in the terms, I get a bunch coming from expanding $ 2 \left\|y_{i,k+1}-\bar{y}_{k+1}\right\|^2$ from Lemma II.6, a bunch of terms from expanding $\mathbb{E}\left[ \left\| \bar{x}^t_{k} - x^\star\right\|^2\right]$ and then a bunch of terms from $ \mathbb{E} \left[\left\|\*x^t_k - \*M \*x^t_k\right\|^2\right]$, which seems more than what you have. For instance, I get three terms has D in it (instead of two in the corollary here). Also I get an extra factor of $\rho$ for the first term $2\rho^{k}\mathbb{E} \left[\left\| \bar{x}_0 - x^\star\right\|^2\right]$, but that's probably because of the index I plugged in was for k+1. Please verify} 
\begin{equation}
\label{eq:lem_dist_min_local3}
    \begin{split}
        \left \|y_{i,k} - x^\star \right\|^2 &=  \left \|y_{i,k}-\bar{y}_{k} + \bar{y}_{k} - x^\star \right\|^2 \\
        &\leq 2 \left\|y_{i,k}-\bar{y}_{k}\right\|^2 +2\left\| \bar{y}_{k} - x^\star \right\|^2\\
        & =  2 \left\|y_{i,k}-\bar{y}_{k}\right\|^2 +2\left\| \bar{x}^t_{k} - x^\star \right\|^2,
        \end{split}
\end{equation}
where we derive the first equality by adding and subtracting $\bar{y}_{k}$ and used~\eqref{eq:near_dgd_x_avg} to obtain the last equality. 

Taking the total expectation on both sides of (\ref{eq:lem_dist_min_local3}) and applying Theorem~\ref{thm:bounded_dist_min}, Lemma~\ref{lem:bounded_variance} and the definition of $C$ completes the proof.
\end{proof}

Corollary~\ref{cor:local_dist} concludes our analysis of the S-NEAR-DGD$^t$ method. For the remainder of this section, we derive the convergence properties of S-NEAR-DGD$^+$, i.e. $t(k)=k$ for $k\geq1$ in~\eqref{eq:near_dgd_y} and~\eqref{eq:near_dgd_x}. 

\begin{thm}\textbf{(Convergence of S-NEAR-DGD$^+$)}
\label{thm:near_dgd_plus}
Consider the S-NEAR-DGD$^+$ method, i.e. $t(k)=k$ for $k\geq 1$. Let $\bar{x}^k_k=\frac{1}{n}\sum_{i=1}^n x^k_{i,k}$ be the average iterates produced by S-NEAR-DGD$^+$ and let the steplength $\alpha$ satisfy
\begin{equation*}
    \alpha < \min \left \{\frac{2}{\mu+L},\frac{2}{\mu_{\bar{f}}+L_{\bar{f}}} \right\}.
\end{equation*}
Then the distance of $\bar{x}^k_k$ to $x^\star$ is bounded for $k=1,2,...$, namely
\begin{equation*}
% \label{eq:thm_near_dgd_p_2}
    \begin{split}
        &\mathbb{E}\Big[\big\|\bar{x}^k_k - x^\star \big\|^2\Big] \leq \rho^k \mathbb{E} \left[ \left\|\bar{x}_0 - x^\star \right\|^2 \right]+ \frac{\eta \theta^k\alpha\rho L^2D}{n\gamma_{\bar{f}}} + \frac{\alpha\sigma_g^2}{n\gamma_{\bar{f}}}+ \frac{\eta\theta^k\alpha\left(1+\kappa\right)^{2} \rho \sigma_g^2}{2\gamma_{\bar{f}}}  + \frac{4\rho L^2 \sigma_c^2}{\left(1-\beta^2\right)\gamma_{\bar{f}}^2}   + \frac{2\eta\theta^k\left(1+\kappa\right)^{2} \rho \sigma_c^2}{\alpha\left(1-\beta^2\right)\gamma_{\bar{f}}},
    \end{split}
\end{equation*} 
where $ \eta = \left|\beta^2-\rho\right|^{-1}$ and $\theta = \max\left\{\rho,\beta^2\right\}$.
% \ew{consider cancel out the 2 on both numerator and denominator in the 4th term}

\end{thm}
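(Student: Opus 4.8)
The plan is to reproduce the argument behind Theorem~\ref{thm:bounded_dist_min}, but keeping track of the fact that at iteration $k$ we now perform $t(k)=k$ consensus rounds, so the $\beta^{2t}$ factors appearing in the consensus-violation bound of Lemma~\ref{lem:bounded_variance} decay with $k$. Concretely, I would set $a_k:=\mathbb{E}\big[\|\bar{x}^k_k-x^\star\|^2\big]$, derive a one-step recursion of the form $a_{k+1}\le \rho\,a_k+(\text{const})+c\,\beta^{2k}$, and then unroll it; the only genuinely new ingredient compared to the constant-$t$ case is a mixed geometric sum $\sum_{h}\rho^{k-1-h}\beta^{2h}$.

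\textbf{One-step recursion and consensus-violation bound.} Since the consensus step preserves averages (Eq.~\eqref{eq:near_dgd_x_avg}), $\bar{x}^{k+1}_{k+1}=\bar{y}_{k+1}$. The derivation in the proof of Lemma~\ref{lem:descent_avg_iter} never uses that the number of consensus rounds is constant: it only uses $\bar{y}_{k+1}=\bar{x}^{t(k)}_k-\alpha\bar{g}_k$, the unbiasedness and bounded variance of $\mathcal{T}_g$, and $L$-smoothness of the $f_i$'s to control $\|h_k-\nabla\bar{f}(\bar{x}^{t(k)}_k)\|^2=\tfrac{L^2}{n}\|\*x^{t(k)}_k-\*M\*x^{t(k)}_k\|^2$. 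Running that argument verbatim with $t$ replaced by $t(k)=k$ (using $\alpha<\tfrac{2}{\mu_{\bar{f}}+L_{\bar{f}}}$, which the theorem hypothesis guarantees) gives
\begin{equation*}
\mathbb{E}\!\left[\big\|\bar{x}^{k+1}_{k+1}-x^\star\big\|^2 \,\Big|\, \mathcal{F}^k_k\right] \le \rho\,\big\|\bar{x}^k_k-x^\star\big\|^2 + \frac{\alpha^2\sigma_g^2}{n} + \frac{\alpha\rho L^2}{n\gamma_{\bar{f}}}\,\big\|\*x^k_k-\*M\*x^k_k\big\|^2 .
\end{equation*}
I would then check that Lemma~\ref{lem:bounded_iterates} and the $\*x$-part of Lemma~\ref{lem:bounded_variance} carry over unchanged to a variable schedule $t(k)$: their proofs only invoke $\|\*Z^{t(k)}\|=1$, $\|I-\*Z^{t(k)}\|<2$, $\|\*Z^{t(k)}-\*M\|=\beta^{t(k)}$, and the $t$-uniform bound $\mathbb{E}[\|\mathcal{E}^c_{t(k),k}\|^2]\le 4n\sigma_c^2/(1-\beta^2)$ of Lemma~\ref{lem:comm_error}. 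Hence $\mathbb{E}[\|\*y_k\|^2]\le B_0$ with $B_0:=D+\tfrac{(1+\kappa)^2n\sigma_g^2}{2L^2}+\tfrac{2(1+\kappa)^2n\sigma_c^2}{\alpha^2(1-\beta^2)L^2}$, and taking $t=k$ in the consensus-violation estimate yields $\mathbb{E}[\|\*x^k_k-\*M\*x^k_k\|^2]\le \beta^{2k}B_0+\tfrac{4n\sigma_c^2}{1-\beta^2}$. Substituting into the previous display and taking total expectation gives
\begin{equation*}
a_{k+1}\le \rho\,a_k + \frac{\alpha^2\sigma_g^2}{n} + \frac{4\alpha\rho L^2\sigma_c^2}{(1-\beta^2)\gamma_{\bar{f}}} + \frac{\alpha\rho L^2 B_0}{n\gamma_{\bar{f}}}\,\beta^{2k} .
\end{equation*}

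\textbf{Unrolling.} Iterating from $a_0=\mathbb{E}[\|\bar{x}_0-x^\star\|^2]$, the two constant-in-$k$ terms are multiplied by $\sum_{h=0}^{k-1}\rho^{k-1-h}\le(1-\rho)^{-1}=(\alpha\gamma_{\bar{f}})^{-1}$, producing the $\tfrac{\alpha\sigma_g^2}{n\gamma_{\bar{f}}}$ and $\tfrac{4\rho L^2\sigma_c^2}{(1-\beta^2)\gamma_{\bar{f}}^2}$ terms of the statement. The $\beta^{2k}$ term is multiplied instead by $\sum_{h=0}^{k-1}\rho^{k-1-h}\beta^{2h}=\tfrac{\beta^{2k}-\rho^k}{\beta^2-\rho}$, whose magnitude is at most $\tfrac{\max\{\beta^2,\rho\}^k}{|\beta^2-\rho|}=\eta\theta^k$; expanding $B_0$ into its three pieces reproduces exactly the $\eta\theta^k$-scaled $D$-, $\sigma_g^2$- and $\sigma_c^2$-terms, and collecting everything with $1-\rho=\alpha\gamma_{\bar{f}}$ gives the claimed bound. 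Note the $\beta^{2k}$-term escapes the $(1-\rho)^{-1}$ factor, which is precisely why the last $\sigma_c^2$-term carries $\alpha^{-1}$ rather than $\alpha^{-2}$.

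\textbf{Main obstacle.} The work is mostly careful bookkeeping rather than a single hard estimate: one must (a) confirm that Lemmas~\ref{lem:bounded_iterates}--\ref{lem:bounded_variance} genuinely transfer to the $t(k)=k$ schedule (they do, since every bound there is uniform in $t$); (b) handle the mixed geometric sum $\sum_h\rho^{k-1-h}\beta^{2h}$ cleanly, including the degenerate case $\beta^2=\rho$, for which $\eta$ is undefined and one should instead use $\sum_h\rho^{k-1-h}\beta^{2h}=k\rho^{k-1}$ (or simply exclude it, as the statement implicitly does); and (c) track powers of $\alpha$ consistently through the recursion so the $\sigma_c^2$-terms land with the correct exponents. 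Minor additional care is needed to justify iterating the $\mathcal{F}^k_k$-conditional inequality via the tower property before taking total expectations.
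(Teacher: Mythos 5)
Your proposal is correct and follows essentially the same route as the paper: the paper's proof simply takes Eq.~\eqref{eq:thm_xk1} of Theorem~\ref{thm:bounded_dist_min} with $t$ replaced by $k$, unrolls the recursion, and bounds the mixed sum $S_1=\sum_{j=0}^{k-1}\rho^j\beta^{2(k-1-j)}=\frac{\beta^{2k}-\rho^k}{\beta^2-\rho}\le\eta\theta^k$ and the plain geometric sum by $(1-\rho)^{-1}=(\alpha\gamma_{\bar f})^{-1}$, exactly as you do. Your additional checks — that Lemmas~\ref{lem:bounded_iterates}--\ref{lem:bounded_variance} are uniform in $t$ and hence transfer to the schedule $t(k)=k$, and the remark about the degenerate case $\beta^2=\rho$ — are careful refinements of bookkeeping the paper leaves implicit, not a different argument.
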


\begin{proof}
Replacing $t$ with $k$ in (\ref{eq:thm_xk1}) in Theorem~\ref{thm:bounded_dist_min} yields,
\begin{equation*}
    \begin{split}
     \mathbb{E}\Big[&\big\|\bar{x}^{k+1}_{k+1} - x^\star\big\|^2  \Big]
    \leq  \rho \mathbb{E}\left[\left\| \bar{x}^k_{k} - x^\star\right\|^2\right] + \frac{\alpha\beta^{2k}\rho L^2D}{n\gamma_{\bar{f}}}+\frac{\alpha^2\sigma_g^2}{n}+ \frac{\alpha\beta^{2k}\left(1+\kappa\right)^{2}\rho\sigma_g^2}{2\gamma_{\bar{f}}} + \frac{4\alpha\rho L^2\sigma_c^2}{\left(1-\beta^2\right)\gamma_{\bar{f}}} + \frac{2\beta^{2k}\left(1+\kappa\right)^{2}\rho \sigma_c^2}{\alpha\left(1-\beta^2\right)\gamma_{\bar{f}}}.
    \end{split}
\end{equation*}
Applying recursively for iterations $1, 2,\ldots, k$, we obtain,
% \begin{equation}
% \label{eq:thm_near_dgd_p_1}
%     \begin{split}
%         \mathbb{E}\Big[\big\|\bar{x}^k_k - x^\star \big\|^2\Big] &\leq \rho^k \mathbb{E} \left[ \left\|\bar{x}_0 - x^\star \right\|^2 \right]\\
%         &\quad + \frac{\alpha\rho L^2D}{n\gamma_{\bar{f}}} \left(\sum_{j=0}^{k-1} \rho^j \beta^{2(k-1-j)} \right) \\
%         &\quad + \frac{\alpha\rho\sigma_g^2}{n\gamma_{\bar{f}}}\left( \sum_{j=0}^{k-1}\rho^j\right) \\
%         &\quad+ \frac{\alpha\left(1+\kappa\right)^2\rho \sigma_g^2}{2\gamma_{\bar{f}}}\left( \sum_{j=0}^{k-1} \rho^j \beta^{2(k-1-j)} \right)\\
%         &\quad + \frac{4\alpha\rho L^2\sigma_c^2}{\left(1-\beta^2\right)\gamma_{\bar{f}}} \left( \sum_{j=0}^{k-1}\rho^j\right) \\
%         &\quad+\frac{2\left(1+\kappa\right)^2\rho \sigma_c^2}{\alpha\left(1-\beta^2\right)\gamma_{\bar{f}}}\left(\sum_{j=0}^{k-1} \rho^j \beta^{2(k-1-j)} \right).
%     \end{split}
% \end{equation}
\begin{equation}
\label{eq:thm_near_dgd_p_1}
    \begin{split}
        \mathbb{E}\Big[&\big\|\bar{x}^k_k - x^\star \big\|^2\Big] \leq \rho^k \mathbb{E} \left[ \left\|\bar{x}_0 - x^\star \right\|^2 \right]  + S_1 \left(\frac{\alpha\rho L^2D}{n\gamma_{\bar{f}}} +   \frac{\alpha\left(1+\kappa\right)^2\rho \sigma_g^2}{2\gamma_{\bar{f}}} +\frac{2\left(1+\kappa\right)^2\rho \sigma_c^2}{\alpha\left(1-\beta^2\right)\gamma_{\bar{f}}}\right) +  S_2 \left(\frac{\alpha^2\sigma_g^2}{n} + \frac{4\alpha\rho L^2\sigma_c^2}{\left(1-\beta^2\right)\gamma_{\bar{f}}} \right)\\
    \end{split}
\end{equation}
where $S_1 = \sum_{j=0}^{k-1} \rho^j \beta^{2(k-1-j)}$ and $S_2 = \sum_{j=0}^{k-1}\rho^j$.

Let $\psi = \frac{\rho}{\beta^2}$. Then $S_1 = \beta^{2(k-1)}\sum_{j=0}^{k-1} \psi^j= \beta^{2(k-1)}\frac{1-\psi^k}{1-\psi}= \frac{\beta^{2k}-\rho^k}{\beta^2 - \rho}\leq \eta \theta^k$.
% \begin{equation*}
% \begin{split}
%     \sum_{j=0}^{k-1}\rho^j \beta^{2(k-1-j)} &= \beta^{2(k-1)} \sum_{j=0}^{k-1} \psi^j\\
%     & = \beta^{2(k-1)}\frac{1-\psi^k}{1-\psi}\\
%     & = \frac{\beta^{2k}-\rho^k}{\beta^2 - \rho} \\
%     &\leq \eta \theta^k.
%     \end{split}
% \end{equation*} 
% \ew{Very clean result now!}
Applying this result and the bound $S_2 \leq \frac{1}{1-\rho} = \frac{1}{\alpha \gamma_{\bar{f}}}$ to (\ref{eq:thm_near_dgd_p_1}) yields the final result.
\end{proof}

Theorem~\ref{thm:near_dgd_plus} indicates that S-NEAR-DGD$^+$ converges with geometric rate $\theta=\max \left\{ \rho , \beta^2 \right \}$ to a neighborhood of the optimal solution $x^\star$ of Problem~\eqref{eq:prob_orig} with size
\begin{equation}
\label{eq:plus_err_size}
    \lim_{k\rightarrow \infty} \sup \mathbb{E}\left[\left\|\bar{x}^k_k - x^\star \right\|^2\right] = \frac{\alpha\sigma_g^2}{n\gamma_{\bar{f}}}  + \frac{4\rho L^2 \sigma_c^2}{\left(1-\beta^2\right)\gamma_{\bar{f}}^2}.
\end{equation}
The first error term on right-hand side  of Eq.~\eqref{eq:plus_err_size} depends on the variance of the gradient error $\sigma_g$ and is inversely proportional to the network size $n$. This scaling with $n$, which has a similar effect to centralized mini-batching, is a trait that our method shares with a number of distributed stochastic gradient algorithms. The last error term depends on the variance of the communication error $\sigma_c$ and increases with $\beta$, implying that badly connected networks accumulate more communication error over time.
% \ew{It would be good to compare against the S-NEAR-DGD-t version above.}

Conversely, Eq.~\eqref{eq:thm_xk2} of Theorem~\ref{thm:bounded_dist_min} yields,
\begin{equation}
\label{eq:t_err_size}
\begin{split}
     \lim_{k\rightarrow \infty} &\sup \mathbb{E}\left[\left\|\bar{x}^t_k - x^\star \right\|^2\right] = \frac{\alpha \sigma_g^2  }{n\gamma_{\bar{f}}} +  \frac{4\rho L^2\sigma_c^2}{\left(1-\beta^2\right)\gamma_{\bar{f}}^2}+  \frac{\beta^{2t}\rho}{\gamma_{\bar{f}}^2}\left(\frac{L^2 D}{n}+ \frac{(1+\kappa)^2 \sigma_g^2}{2}+\frac{2(1+\kappa)^2\sigma_c^2}{\alpha(1-\beta^2)}\right).
    \end{split}
\end{equation}
Comparing~\eqref{eq:plus_err_size} and~\eqref{eq:t_err_size}, we observe that~\eqref{eq:t_err_size} contains three additional error terms, all of which depend directly on the algorithm parameter $t$. Our results imply that S-NEAR-DGD$^t$ generally converges to a worse error neighborhood than S-NEAR-DGD$^+$, and approaches the error neighborhood of S-NEAR-DGD$^+$ as $t\rightarrow \infty$.

\section{Numerical results}
\label{sec:numerical}

\subsection{Comparison to existing algorithms}
% In this section, we present a set of numerical experiments conducted to quantify the empirical performance of S-NEAR-DGD. 
To quantify the empirical performance of S-NEAR-DGD, we consider the following regularized logistic regression problem to classify the mushrooms dataset \cite{mushrooms},
% \ew{is this only for mushrooms?}
\begin{equation*}
 \min_{x \in \mathbb{R}^p} f(x) = \frac{1}{M}\sum_{s=1}^M \log(1+e^{-b_s \langle A_s , x \rangle}) + \frac{1}{M}\|x\|^2_2,
\end{equation*}
where $M$ is the total number of samples ($M=8124$), $A \in \mathbb{R}^{M \times p}$ is a feature matrix, $p$ is the problem dimension ($p=118$) and $b \in \{-1,1\}^{M}$ is a vector of labels. 

To solve this problem in a decentralized manner, we evenly distribute the samples among $n$ nodes and assign to node $i \in \{1,...,n\}$ the private function $f_i(x) = \frac{1}{|S_i|}\sum_{s \in S_i} \log(1+e^{-b_s \langle A_s , x \rangle}) + \frac{1}{M}\|x\|^2_2$,
% \ew{do we also shuffle/permute the data to make it iid? Do we need to scale the M factor?}
% \begin{equation*}
% \label{eq:numerical_prob}
%     f_i(x) = \frac{1}{|S_i|}\sum_{s \in S_i} \log(1+e^{-b_s \langle A_s , x \rangle}) + \frac{1}{M}\|x\|^2_2,
% \end{equation*}
where $S_i$ is the set of sample indices accessible to node $i$. 
For the network topology we selected a connected, random network of $n=14$ nodes generated with the Erd\H{o}s-R\'{e}nyi model (edge probability $0.5$). To construct the stochastic gradient approximations $g_{i,k-1}=\mathcal{T}_g\left[\nabla f_i \left(x^{t(k-1)}_{i,k-1}\right)\right]$, each node randomly samples with replacement a batch of size $B=16$  
% \ew{any reasoning here for the batch size?} 
from its local distribution and computes a mini-batch gradient. To simulate the inexact communication operator $\mathcal{T}_c \left[\cdot\right]$ we implemented the probabilistic quantizer of Example~\ref{ex:prob_q}.
% \ew{check for naming.}

Moreover, to test the effects of different approaches to handle the communication noise/quantization error, we implemented three schemes as listed in Table~\ref{tab:consensus_variants}. Specifically, variant~Q.1 is what S-NEAR-DGD uses in step \eqref{eq:near_dgd_x}, and includes a consensus step using the quantized noisy variables $q_{i,k}^j$ and then adding the error correction term $\left(x^{j-1}_{i,k}-q^j_{i,k}\right)$. Variant~Q.2 considers a more na\"{\i}ve approach, where a simple weighted average of the noisy variables $q_{i,k}$ is calculated without the addition of error correction. Finally, in variant~Q.3 we assume that node $i$ either does not have access to its local quantized variable $q_{i,k}$ or prefers to use the original quantity $x^{j-1}_{i,k}$ whenever possible and thus computes the weighted average using its local unquantized variable and quantized versions from its neighbors. For algorithms that perform consensus step on gradients instead of the decision variables, similar schemes were implemented. 

\begin{table}[ht]
\begin{center}
\begin{tabular}{ c c  }
\hline
 \textbf{Variant name} & \textbf{Consensus update}\\ 
 \hline
%  \hline
Q.1 &  $x^j_{i,k} \leftarrow \sum_{l=1}^n \left(w_{il}  q^{j}_{l,k} \right) + \left( x^{j-1}_{i,k} - q^j_{i,k}\right)$ \\ Q.2 & $x^j_{i,k} \leftarrow \sum_{l=1}^n \left(w_{il}  q_{i,k}^j\right)$  \\
%  \hline
Q.3 & $x^j_{i,k} \leftarrow w_{ii}x^{j-1}_{i,k} + \sum_{l \in \mathcal{N}_i}\left(w_{il}  q^j_{l,k}\right)$   \\
 \hline
\end{tabular}
\caption{Quantized consensus step variations}
\label{tab:consensus_variants}
\end{center}
\end{table}
We compared the S-NEAR-DGD$^t$ method with $t=2$ and $t=5$, to versions of DGD \cite{NedicSubgradientConsensus,sundharram_distributed_2010}, EXTRA~\cite{extra} and DIGing~\cite{diging} with noisy gradients and communication. All methods use the same mini-batch gradients at every iteration and exchange variables that are quantized with the same stochastic protocol. We implemented the consensus step variations Q.1, Q.2 and Q.3 for all methods (we note that combining DIGing with Q.1 and using the true local gradients
% \ew{approximately} 
recovers the iterates of Q-NEXT~\cite{lee2018finite}. However, the authors of~\cite{lee2018finite} accompany their method with a dynamic quantizer that we did not implement for our numerical experiments% \sout{without the custom deterministic quantizer proposed for this method}\ew{Not sure what this means, either explain in detail or not include}
). All algorithms shared the same steplength ($\alpha=1$)\footnote{Smaller steplengths yield data trends like those shown in Fig.~\ref{fig:all_methods}.}.  
% \ew{This is a rather large stepsize for gradient based method, does it satisfy our theorems? If not,  could this be why DIGing and EXTRA are diverging? Did you try smaller stepsize and obtained similar results? If so, add a footnote}

In Figure~\ref{fig:all_methods}, we plot the squared error $\|\bar{x}_k - x^\star\|^2$ versus the number of algorithm iterations (or gradient evaluations) for quantization parameter values $\Delta = 10$~(right) and $\Delta = 10^5$~(left). The most important takeaway from Fig.~\ref{fig:all_methods} is that the two gradient tracking (GT) methods, namely EXTRA and DIGing, diverge without error correction (variants Q.2 and Q.3). Conversely, purely primal methods such as DGD and S-NEAR-DGD appear to be more robust to quantization error when error correction is not incorporated. This observation aligns with recent findings indicating that GT methods are more sensitive to network topology (the value of $\beta$, on which the quantization error strongly depends) compared to primal methods~\cite{Yuan2020CanPM}. Our intuition suggests that variant Q.1 achieves better performance than Q.2 and Q.3 by cancelling out the average quantization error and preventing it from accumulating over time (for an analysis of S-NEAR-DGD under variant Q.2, we refer readers to Section~\ref{appendix:q2} in the Appendix). We notice negligible differences between variants Q.2 and Q.3 for the same algorithm. All methods achieved comparable accuracy when combined with Q.1 and the quantization was relatively fine (Fig. \ref{fig:all_methods}, right), with the exception of DGD which converges further away from the optimal solution. For relatively coarse quantization (Fig.~\ref{fig:all_methods}, left), the two variants of S-NEAR-DGD and EXTRA perform marginally better than DGD and DIGing when combined with variant~Q.1.

\begin{figure}
    \centering
    \includegraphics[width=0.8\textwidth]{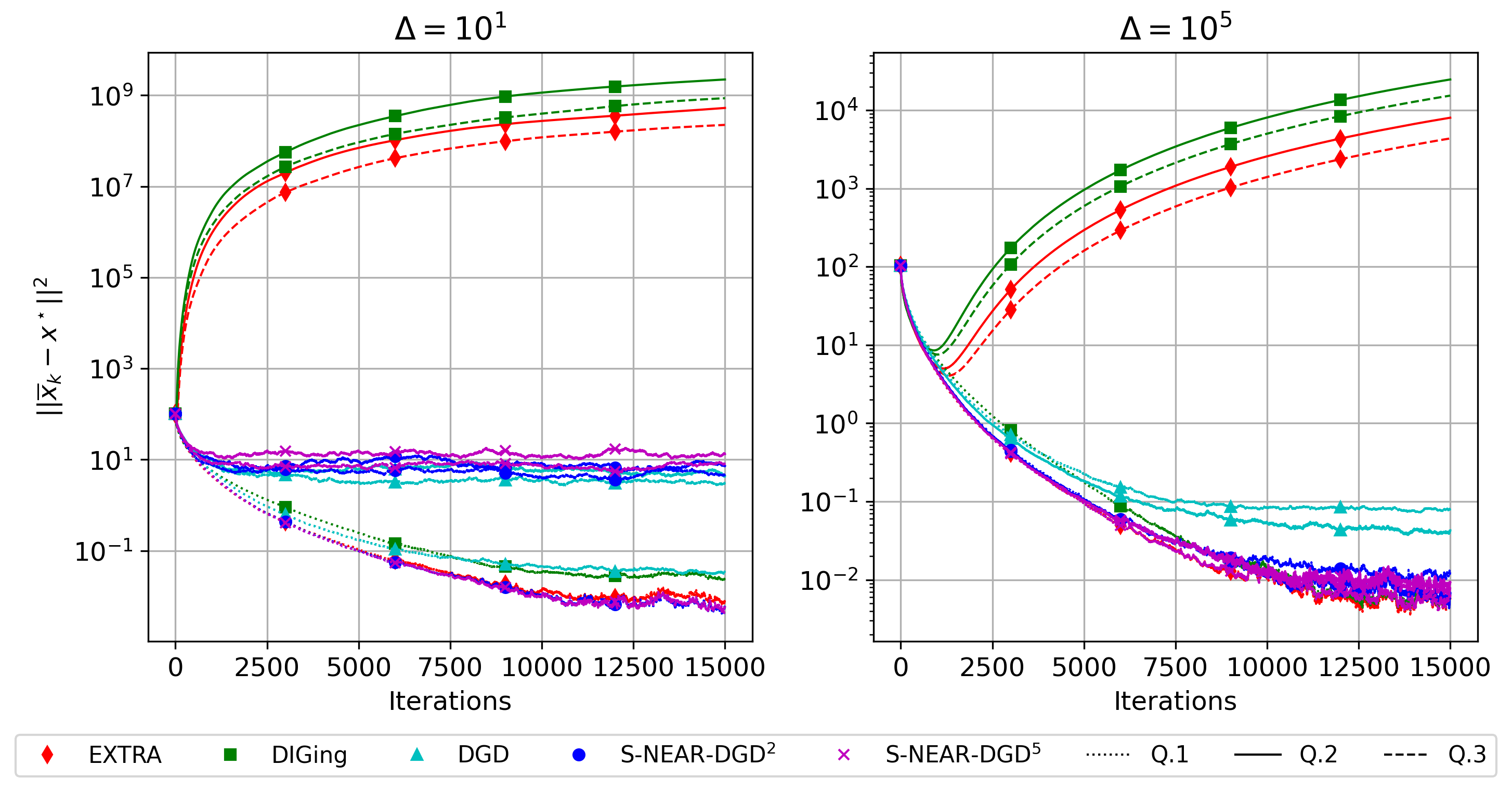}
    % \subfloat[$\Delta=10^3$\label{fig:dig1}]{ \centering \includegraphics[height=0.22\textwidth]{figures/all_methods_dig_1.png}} 
    % \subfloat[$\Delta=10^5$\label{fig:dig5}]{ \centering \includegraphics[height=0.22\textwidth]{figures/all_methods_dig_5.png}} 
    \caption{Error plots, $\Delta=10$ (left) and $\Delta=10^5$ (right)}
    \label{fig:all_methods}
\end{figure}

\subsection{Scalability}

% Equations~\eqref{eq:plus_err_size} and~\eqref{eq:t_err_size} imply that the network size $n$ has a mixed effect on the convergence neighborhoods of S-NEAR-DGD$^t$ and  S-NEAR-DGD$^+$; both equations contain error terms inversely proportional to $n$, however increasing the network size can result in worse connectivity for some network topologies and higher values of $\beta$. 

To evaluate the scalability of our method and the effect of network type on convergence accuracy and speed, we tested $5$ network sizes ($n=5,10,15,20,25$) and $5$ different network types: $i$) complete, $ii$) random (connected Erd\H{o}s-R\'{e}nyi, edge probability $0.4$), $iii$) 4-cyclic (i.e. starting from a ring graph, every node is connected to $4$ immediate neighbors), $iv$) ring and $v$) path. We compared $2$ instances of the NEAR-DGD$^t$ method, $i$) $t=1$ and $ii$) $t=7$. We opted to exclude NEAR-DGD$^+$ from this set of experiments to facilitate result interpretability in terms of communication load. We set $\alpha = 1$ and $\Delta=10^2$ for all instances of the experiment, while the batch size for the calculation of the local stochastic gradients was set to $B=16$ in all cases. Different methods applied to networks of identical size selected (randomly, with replacement) the same samples at each iteration.
% \ew{You meant different methods have the same sample, not one method uses the same sample across iterations, right?}. 

Our results are summarized in Figure~\ref{fig:scaling}. In Fig.~\ref{fig:scaling}, top left, we terminated all experiments after  $T=2\cdot 10^4$ iterations and plotted the normalized function value error $\left(f\left(\bar{x}_k\right) - f\left(x^\star\right)\right)/f(x^\star)$, averaged over the last $\tau=10^3$ iterations. We observe that networks with better connectivity converge closer to the true optimal value, implying that the terms inversely proportional to $(1-\beta^2)$ dominate the error neighborhood in Eq.~\eqref{eq:t_err_size}. Adding more nodes improves convergence accuracy for well-connected graphs (complete, random), possibly due to the "variance reduction" effect on the stochastic gradients discussed in the previous section. For the remaining graphs, however, this beneficial effect is outweighed by the decrease in connectivity that results from the introduction of additional nodes and consequently, large values of $n$ yield worse convergence neighborhoods.
% Our numerical results indicate that the communication error generally dominates \ew{What does this mean? Dominate what? Start by saying to analyze the size of error neighborhood}, as worse-connected graphs (e.g. path graph with large $n$) converge further away from the solution. 
Increasing the number of consensus steps per iteration has a favorable effect on accuracy, an observation consistent with our theoretical findings in the previous section.

For the next set of plots, we analyze the run time and cost of the algorithm until termination. The presence of stochastic noise makes the establishment of a termination criterion that performs well for all parameter combinations a challenging task. Inspired by~\cite{welford_method}, we tracked an approximate time average $\bar{f}$ of the function values $f\left(\bar{x}_k\right)$ using Welford's method for online sample mean computation, i.e. $\bar{f}_k = \bar{f}_{k-1} + \frac{f(\bar{x}_k) - \bar{f}_{k-1}}{k}$, for $k=1,2,...$, with $\bar{f}_0 = f\left(\bar{x}_0\right)$. We terminate the algorithm at iteration count $k$ if $ \left| \frac{\bar{f}_k - \bar{f}_{k-1}}{\bar{f}_{k-1}} \right| < \epsilon$, where $\epsilon$ is a tolerance parameter. 

In Figure~\ref{fig:scaling}, top right, we graph the number of steps (or gradient evaluations) until the termination criterion described in the previous paragraph is satisfied for $\epsilon=10^{-5}$. We observe a similar trend to Figure~\ref{fig:scaling}, top left, indicating that poor connectivity has an adverse effect on both accuracy and the rate of convergence, although the latter is not predicted by the theory.
% \ew{, which is consistent? with theorem..}. 
Increasing the number of consensus steps per iteration reduces the total number of steps needed to satisfy the stopping criterion. Finally, in the bottom row of Fig.~\ref{fig:scaling} we plot the total application cost per node until termination, which we calculated using the cost framework first introduced in~\cite{berahas_balancing_2019},
\begin{equation*}
\label{eq:cost_frame}
    \text{Cost} = c_c \times \# \text{Communications} +  c_g \times \# \text{Computations},
\end{equation*}
where $c_c$ and $c_g$ are constants representing the application-specific costs of communication and computation respectively. 

In Fig.~\ref{fig:scaling}, bottom right, the communication is a $100$ times cheaper than computation, i.e. $c_c=0.01 \cdot c_g$. Increasing the number of consensus steps per iteration almost always yields faster convergence in terms of total cost, excluding some cases where the network is already reasonably well-connected (eg. complete graph). In Fig.~\ref{fig:scaling}, bottom left, the costs of computation and communication are equal, i.e. $c_c=c_g$, and increasing the number of consensus steps per iteration results in higher total cost in all cases.

% In Figure~\ref{fig:scaling}, bottom, the costs of computation and communication are equal, i.e. $c_c=c_g$, and increasing the number of consensus steps per iteration results in higher total cost for all cases. In Figure~\ref{fig:cost1}, on the other hand, where communication is a hundred times cheaper than computation, i.e. $c_c=0.01 \cdot c_g$, increasing the number of consensus steps per iteration yields faster convergence in terms of total cost for almost all combinations of methods, graph topologies and network sizes.

% \ew{We started by saying we want to analyze the scalability, but we didn't actually comment on how well these methods scale as network size increases... Should comment on that as well. Say things like the number of iteration grows much faster for graphs with poor connectivity, for example. }

\begin{figure}
    \centering
    \includegraphics[width=0.8\textwidth]{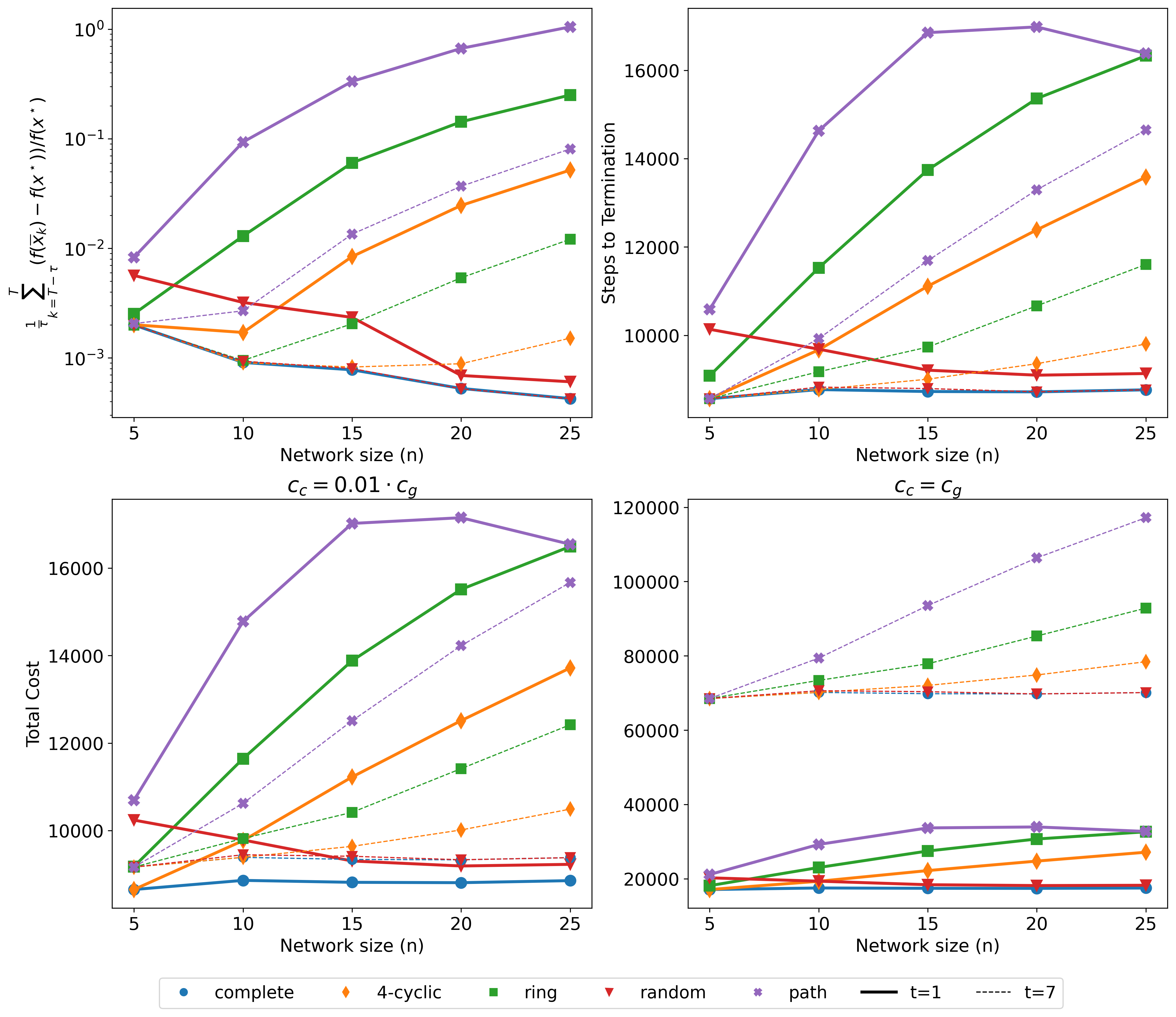}
    % \subfloat[normalized function error averaged over the last $\tau$ iterations\label{fig:avg_acc}]{ \includegraphics[height=0.22\textwidth]{figures/avg_error_ALL.png}} 
    % \subfloat[steps (or gradient evaluations) to termination\label{fig:termin}]{  \includegraphics[height=0.22\textwidth]{figures/steps_term_unknown_ALL.png}} 

    % \subfloat[total cost, $c_g=1$, $c_c = c_g$\label{fig:cost1}]{  \includegraphics[height=0.22\textwidth]{figures/cost_term_unknown_ALL2.png}} 
    % \subfloat[total cost, $c_g=1$, $c_c = 0.01 \cdot c_g$\label{fig:cost2}]{  \includegraphics[height=0.22\textwidth]{figures/cost_term_unknown_ALL.png}} 
    \caption{Dependence on network type and size. Function value error averaged over the last $\tau$ iterations out of $T$ total iterations (top left), steps/gradient evaluations until termination (top right), total cost until termination when communication is cheaper than computation (bottom left), and when communication and computation have the same cost (bottom right).}
    \label{fig:scaling}
\end{figure}

\section{Conclusion}
\label{sec:conclusion}

We have proposed a first order method (S-NEAR-DGD) for distributed optimization over fixed, undirected networks, that can tolerate stochastic gradient approximations and noisy information exchange to alleviate the loads of computation and communication, respectively. The strength of our method lies in its flexible framework, which alternates between gradient steps and a number of nested consensus rounds that can be adjusted to match application requirements. Our analysis indicates that S-NEAR-DGD converges in expectation to a neighborhood of the optimal solution when the local objective functions are strongly convex and have Lipschitz continuous gradients. We have quantified the dependence of this neighborhood on algorithm parameters, problem-related quantities and the topology and size of the network. Empirical results demonstrate that our algorithm performs comparably or better than state-of-the-art methods, depending on the implementation of stochastic quantized consensus. Future directions include the application of this method to nonconvex settings and directed graphs.

\appendix

In this section, we analyze S-NEAR-DGD under the quantized consensus variant Q.2 of Table~\ref{tab:consensus_variants}. We retain the same notation and assumptions with the main part of this work. 

The system-wide iterates of S-NEAR-DGD when consensus is implemented with variant Q.2 can be expressed as,
\begin{subequations}
\begin{align}
    &\*y_{k} = \*x_{k-1}^{t(k-1)} - \alpha \*g_{k-1}\label{eq:near_dgd_y_q2},\\
    &\*x^j_k =  \*Z \*q^{j}_k,\quad j=1,...,t(k), \label{eq:near_dgd_x_q2}
\end{align}
\end{subequations}
where $\*g_{k-1}=\left[g_{1,k-1};...;g_{n,k-1}\right]$ , $g_{i,k-1}=\mathcal{T}_g\left[\nabla f_i \left(x^{t(k-1)}_{i,k-1}\right)\right]$, $\*Z = \left(\*W \otimes I_p\right) \in \mathbb{R}^{np \times np}$, $\*q^j_k=\left[q^j_{1,k};...;q^j_{n,k}\right]$ for $j=1,...,t(k)$ and $q^{j}_{i,k}=\mathcal{T}_c\left[x^{j-1}_{i,k}\right]$. 

In the next subsection, we derive the convergence properties of the iterates generated by~\eqref{eq:near_dgd_y_q2} and~\eqref{eq:near_dgd_x_q2}. We closely follow the proof structure of our main analysis, excluding those Lemmas that are independent of the implementation of consensus (Lemmas~\ref{lem:descent_f}, \ref{lem:global_L_mu}, \ref{lem:comp_error} and~\ref{lem:descent_avg_iter}). In Lemma~\ref{lem:comm_error_q2}, we show that the total communication error in a single iteration of S-NEAR-DGD$^t$ ($t(k)=t$ in~\eqref{eq:near_dgd_y_q2} and~\eqref{eq:near_dgd_x_q2}) is proportional to the parameter $t$, the number of consensus rounds performed. We derive upper bounds for the magnitude of the iterates and the distance of the local iterates to the average iterates in Lemmas~\ref{lem:bounded_iterates_q2} and~\ref{lem:bounded_variance_q2}, respectively. In Theorem~\ref{thm:bounded_dist_min_q2}, we prove that the average iterates $\bar{x}^t_k =\frac{1}{n}\sum_{i=1}^n x^t_{i,k}$ of S-NEAR-DGD$^t$ under Q.2 converge (in expectation) with geometric rate to a neighborhood of the optimal solution. Specifically, this convergence neighborhood can be described by,
\begin{equation}
\label{eq:t_err_size_q2}
\begin{split}
     \lim_{k\rightarrow \infty} \sup \mathbb{E}\left[\left\|\bar{x}^t_k - x^\star \right\|^2\right] &= \frac{\beta^{2t}\rho L^2D}{n\gamma_{\bar{f}}^2} + \frac{\alpha \sigma_g^2  }{n\gamma_{\bar{f}}}+  \frac{\beta^{2t} \left(1+\kappa \right)^2\rho\sigma_g^2}{2\gamma_{\bar{f}}^2} + \frac{t\sigma_c^2}{n\alpha\gamma_{\bar{f}}} + \frac{\rho L^2t\sigma_c^2}{\gamma_{\bar{f}}^2} +  \frac{\beta^{2t}\left(1+\kappa\right)^2\rho t\sigma_c^2}{2\alpha^2\gamma_{\bar{f}}^2},
    \end{split}
\end{equation}
We observe that several terms in~\eqref{eq:t_err_size_q2} are proportional to $t$, the number of consensus rounds per iteration. Conversely, when an error correction mechanism is incorporated to consensus,~\eqref{eq:t_err_size} suggests that error terms are either independent of or decrease with $t$. 

Next, we prove that the local iterates of S-NEAR-DGD$^t$ under Q.2 converge to a neighborhood of the optimal solution in Corollary~\ref{cor:local_dist_q2}. Finally, we establish an upper bound for the distance of the average iterates $\bar{x}^k_k =\frac{1}{n}\sum_{i=1}^n x^k_{i,k}$ of S-NEAR-DGD$^+$ ($t(k)=k$ in~\eqref{eq:near_dgd_y_q2} and~\eqref{eq:near_dgd_x_q2}) to the optimal solution in Theorem~\ref{thm:near_dgd_plus_q2}. Our theoretical results indicate that this method diverges, as the distance to the solution increases at every iteration.

\subsection{Convergence analysis of S-NEAR-DGD under consensus variant Q.2}
\label{appendix:q2}

\begin{lem}\textbf{(Bounded communication error (Q.2))}
\label{lem:comm_error_q2}
Let $\mathcal{E}^c_{t,k} := \*x^{t}_k - \*Z^{t}\*y_k$ be the total communication error at the $k$-th iteration of S-NEAR-DGD$^t$ under quantization variant Q.2, i.e. $t(k)=t$ in~\eqref{eq:near_dgd_y_q2} and~\eqref{eq:near_dgd_x_q2}. Then the following relations hold for $k=1,2,...$
\begin{equation*}
    \begin{split}
      \mathbb{E}_{\mathcal{T}_c}\left[\mathcal{E}^{c}_{t,k} \big| \mathcal{F}^0_k \right] = \mathbf{0}, \quad \mathbb{E}_{\mathcal{T}_c}\left[\left \|\mathcal{E}^{c}_{t,k}  \right\|^2  \big| \mathcal{F}^0_k \right]
        &\leq nt\sigma_c^2.
    \end{split}
\end{equation*}
\end{lem}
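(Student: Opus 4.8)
The plan is to mimic the structure of the proof of Lemma~\ref{lem:comm_error}, but exploit the fact that under variant~Q.2 the consensus update $\*x^j_k = \*Z\*q^j_k$ completely overwrites the previous iterate (no error-correction residual survives), so the propagation of errors is much simpler. First I would unroll the recursion~\eqref{eq:near_dgd_x_q2}: writing $\*q^j_k = \*x^{j-1}_k + d^j_k$ with $d^j_k := \*q^j_k - \*x^{j-1}_k = [\epsilon^j_{1,k};\dots;\epsilon^j_{n,k}]$ as in Assumption~\ref{assum:tc_bound}, we get $\*x^j_k = \*Z\*x^{j-1}_k + \*Z d^j_k$, and iterating from $\*x^0_k = \*y_k$ down to $\*x^t_k$ yields $\*x^t_k = \*Z^t\*y_k + \sum_{j=1}^t \*Z^{t-j+1} d^j_k$. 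Hence $\mathcal{E}^c_{t,k} = \*x^t_k - \*Z^t\*y_k = \sum_{j=1}^t \*Z^{t-j+1} d^j_k$, which is the Q.2 analogue of Eq.~\eqref{eq:lem_comp_err_1} (with $\*Z^{t-j+1}$ in place of $\*Z^{t-j}\tilde{\*Z}$).

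For the first moment, exactly as in Lemma~\ref{lem:comm_error}, the nesting $\mathcal{F}^0_k \subseteq \mathcal{F}^{j-1}_k$ together with $\mathbb{E}_{\mathcal{T}_c}[d^j_k \mid \mathcal{F}^{j-1}_k] = \mathbf{0}$ and the tower property gives $\mathbb{E}_{\mathcal{T}_c}[d^j_k \mid \mathcal{F}^0_k] = \mathbf{0}$ for each $j$, so linearity of expectation yields $\mathbb{E}_{\mathcal{T}_c}[\mathcal{E}^c_{t,k} \mid \mathcal{F}^0_k] = \mathbf{0}$. For the second moment, set $D^j_k := \*Z^{t-j+1} d^j_k$. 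The key difference from the error-correction case is the operator bound: since $\*Z$ is symmetric doubly stochastic, $\|\*Z^{t-j+1}\| = 1$ (not $\beta^{t-j}$, because here the full matrix $\*Z$ acts, whose top eigenvalue is $1$), so $\mathbb{E}_{\mathcal{T}_c}[\|D^j_k\|^2 \mid \mathcal{F}^0_k] \le \mathbb{E}_{\mathcal{T}_c}[\|d^j_k\|^2 \mid \mathcal{F}^0_k]$; then the same tower-property argument plus Assumption~\ref{assum:tc_bound} gives $\mathbb{E}_{\mathcal{T}_c}[\|d^j_k\|^2 \mid \mathcal{F}^0_k] = \sum_{i=1}^n \mathbb{E}_{\mathcal{T}_c}[\|\epsilon^j_{i,k}\|^2 \mid \mathcal{F}^0_k] \le n\sigma_c^2$. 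The cross terms vanish by the same independence argument as in Lemma~\ref{lem:comm_error}: for $j_1 \neq j_2$ the errors $d^{j_1}_k, d^{j_2}_k$ are independent and one of them is zero-mean conditionally, so $\mathbb{E}_{\mathcal{T}_c}[\langle D^{j_1}_k, D^{j_2}_k\rangle \mid \mathcal{F}^0_k] = 0$. Therefore $\mathbb{E}_{\mathcal{T}_c}[\|\mathcal{E}^c_{t,k}\|^2 \mid \mathcal{F}^0_k] = \sum_{j=1}^t \mathbb{E}_{\mathcal{T}_c}[\|D^j_k\|^2 \mid \mathcal{F}^0_k] \le t \cdot n\sigma_c^2 = nt\sigma_c^2$.

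I do not expect a genuine obstacle here; the proof is strictly easier than that of Lemma~\ref{lem:comm_error}. The only point requiring a little care is the bookkeeping in the unrolling: one must be careful with the exponent on $\*Z$ (it is $t-j+1$, so the term from the last round $j=t$ carries $\*Z^1$ and the term from $j=1$ carries $\*Z^t$) and must correctly identify the $\sigma$-algebra against which each $d^j_k$ is conditionally zero-mean. The conceptual message to highlight is that the absence of error correction is precisely why the spectral contraction $\beta$ is lost from the bound: each quantization error is averaged by powers of $\*Z$, whose spectral radius is $1$, so the errors do not decay and their variances simply add up, producing the linear-in-$t$ bound $nt\sigma_c^2$ rather than the $t$-independent $4n\sigma_c^2/(1-\beta^2)$ of Lemma~\ref{lem:comm_error}.
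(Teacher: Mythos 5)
Your proposal is correct and follows essentially the same route as the paper: the identical decomposition $\mathcal{E}^c_{t,k}=\sum_{j=1}^{t}\*Z^{t-j+1}\left(\*q^j_k-\*x^{j-1}_k\right)$ (the paper obtains it by an add-and-subtract telescoping rather than direct unrolling, which is immaterial), the same tower-property argument for the zero mean, and the same use of $\left\|\*Z\right\|=1$ plus vanishing cross terms to get $nt\sigma_c^2$. No gaps.
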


\begin{proof} Setting $\*x^0_k=\*y_k$, we can re-write $\mathcal{E}^{c}_{t,k}$ as,
\begin{equation}
\label{eq:lem_comp_err_1_q2}
    \begin{split}
    \mathcal{E}^{c}_{t,k} &= \*Z \*q^{t}_k - \*Z^t \*x^0_k\\
    &= \*Z \*q^{t}_k - \sum_{j=1}^{t-1}\*Z^{t-j}\*x_k^{j} + \sum_{j=1}^{t-1}\*Z^{t-j}\*x_k^{j} -  \*Z^t \*x^0_k\\\
    &= \*Z \*q^{t}_k - \sum_{j=1}^{t-1}\*Z^{t-j}\*x_k^{j} + \sum_{j=1}^{t-1}\*Z^{t-j+1}\*q_k^{j} -  \*Z^t \*x^0_k\\\
    &=\sum_{j=1}^{t} \*Z^{t-j+1}\left(\*q^{j}_k-\*x^{j-1}_k \right),
    \end{split}
\end{equation}
where we used (\ref{eq:near_dgd_x_q2}) to get the first equality, added and subtracted the quantity $\sum_{j=1}^{t-1}\*Z^{t-j}\*x_k^{j}$ to get the second equality and used (\ref{eq:near_dgd_x_q2}) again for the third equality. 
We notice that $\*q^j_k-\*x^{j-1}_k=\left[\epsilon^j_{i,k};...;\epsilon^j_{n,k}\right]$ as defined in Assumption~\ref{assum:tc_bound}. For $j= 1,\hdots, t(k)$ we therefore have, 
\begin{equation*}
     \mathbb{E}_{\mathcal{T}_c}\left[ \*q^j_k-\*x^{j-1}_k  \Big| \mathcal{F}^{j-1}_k\right] = \mathbf{0}.
\end{equation*}
% \ew{Check index on $\mathcal{F}^j_k$, I feel it should be $\mathcal{F}^{j-1}_k$, otherwise, given $\mathcal{F}^j_k$, q is deterministic.}
Due to the fact that $\mathcal{F}^{0}_k \subseteq \mathcal{F}^{1}_k \subseteq ... \subseteq \mathcal{F}^{j-1}_k$, applying the tower property of conditional expectation we yields,
\begin{equation}
\label{eq:comm_err_1st_mom_q2}
    \begin{split}
         \mathbb{E}_{\mathcal{T}_c}\left[ \*q^j_k-\*x^{j-1}_k  \Big| \mathcal{F}^0_k\right] = \mathbb{E}_{\mathcal{T}_c}\left[ \mathbb{E}_{\mathcal{T}_c}\left[ \*q^j_k-\*x^{j-1}_k \Big| \mathcal{F}^{j-1}_k\right] \Big| \mathcal{F}^0_k\right] = \mathbf{0}, \quad j=1,...,t,
    \end{split}
\end{equation}
and hence  $\mathbb{E}_{\mathcal{T}_c}\left[ \mathcal{E}^c_{t,k}  \big| \mathcal{F}^0_k\right] = \mathbf{0}$ due to linearity of expectation and Eq. \eqref{eq:lem_comp_err_1_q2}.
Moreover, using the fact that $\left\|\*Z\right\|=1$ due to Assumption~\ref{assum:consensus_mat}, 
% \ew{Need to define what we mean by non-expansiveness, or we can just say since all eigenvalues of W have magnitude less or equal to 1}, 
we obtain for $j=1,...,t$,
\begin{equation}
\label{eq:comm_err_2nd_mom_q2}
    \begin{split}
         \mathbb{E}_{\mathcal{T}_c}\left[ \left\| \*Z^{t-j+1}\left(\*q^j_k-\*x^{j-1}_k \right) \right\|^2 \Big| \mathcal{F}^0_k\right] &\leq  \mathbb{E}_{\mathcal{T}_c}\left[ \left\| \*q^j_k-\*x^{j-1}_k  \right\|^2 \Big| \mathcal{F}^0_k\right]\\
         &=\mathbb{E}_{\mathcal{T}_c}\left[ \mathbb{E}_{\mathcal{T}_c}\left[ \left\| \*q^j_k-\*x^{j-1}_k \right\|^2 \Big| \mathcal{F}^{j-1}_k\right] \bigg| \mathcal{F}^0_k\right] \\
         &=\mathbb{E}_{\mathcal{T}_c}\left[  \sum_{i=1}^{n}  \mathbb{E}_{\mathcal{T}_c}\left[ \left\|\epsilon^j_{i,k} \right\|^2  \Big| \mathcal{F}^{j-1}_k\right] \bigg| \mathcal{F}^0_k\right]\\
         &\leq n\sigma_c^2,
    \end{split}
\end{equation}
where we derived the second inequality using the tower property of conditional expectation and applied Assumption~\ref{assum:tc_bound} to get the last inequality.

Assumption~\ref{assum:tc_bound} implies that for all $i=1,...,n$ and $j_1 \neq j_2$, $\epsilon^{j_1}_{i,k}$ and $\epsilon^{j_2}_{i,k}$ and by extension $\*q^{j_1}_k - \*x^{j_1-1}_k$ and $\*q^{j_2}_k - \*x^{j_2-1}_k$ are independent. Eq.~\eqref{eq:comm_err_1st_mom_q2} then yields,
\begin{equation*}
    \mathbb{E}_{\mathcal{T}_c}\left[\left \langle \*Z^{t-j_1+1}\left(\*q^{j_1}_k - \*x^{j_1-1}_k\right), \*Z^{t-j_1+1}\left(\*q^{j_1}_k - \*x^{j_1-1}_k\right) \right \rangle\bigg|\mathcal{F}^0_k\right] = \mathbf{0}.
\end{equation*}
Combining the equation above with~\eqref{eq:comm_err_2nd_mom_q2} and by linearity of expectation, after expanding the squared norm $\left\|\mathcal{E}^c_{t,k}\right\|^2$ we obtain,
\begin{equation*}
    \begin{split}
         \mathbb{E}_{\mathcal{T}_c} \left[\left\| \mathcal{E}^c_{t,k} \right\|^2 \big| \mathcal{F}^{0}_k \right] &=  \mathbb{E}_{\mathcal{T}_c} \left[ \left\|\sum_{j=1}^{t} \*Z^{t-j+1}\left(\*q^j_k-\*x^{j-1}_k \right)\right\|^2\Bigg| \mathcal{F}^{0}_k \right] \\
          &= \sum_{j=1}^{t} \mathbb{E}_{\mathcal{T}_c} \left[ \left\| \*Z^{t-j+1}\left(\*q^j_k-\*x^{j-1}_k \right)\right\|^2\Bigg| \mathcal{F}^{0}_k \right]\\
         &\leq nt\sigma_c^2,
    \end{split}
\end{equation*}
which completes the proof.
\end{proof}

% %%%%%%%%%%%%%%%%%%%%%%%%%%%%%%%%%%%
% %%%%%%%%%%%%%%%%%%%%%%%%%%%%%%%%%%%
% We are now ready to proceed with our main analysis in the next subsection.

We begin our convergence analysis by proving that the iterates generated by S-NEAR-DGD$^t$ are bounded in expectation.

\begin{lem}\textbf{(Bounded iterates (Q.2))}
\label{lem:bounded_iterates_q2} Let $\*y_k$ and $\*x_k$ be the iterates generated by (\ref{eq:near_dgd_y_q2}) and (\ref{eq:near_dgd_x_q2}), respectively, from initial point $\*y_0 \in \mathbb{R}^{np}$. Moreover, let the steplength $\alpha$ satisfy,
\begin{equation*}
    \alpha < \frac{2}{\mu + L},
\end{equation*}
where $\mu=\min_i \mu_i$ and $L=\max_i L_i$.

Then $\*x_k$ and $\*y_k$ are bounded in expectation for $k=1,2,...$, i.e.,
\begin{equation*}
    \begin{split}
    \mathbb{E}\left[\left\| \*y_{k}\right\|^2 \right]\leq D + 
      \frac{(1+\kappa)^2n\sigma_g^2}{2L^2}+ \frac{(1+\kappa)^2nt\sigma_c^2}{2\alpha^2 L^2},
    \end{split}
\end{equation*}
\begin{equation*}
    \begin{split}
        \mathbb{E}\left[\left\| \*x^t_k \right\|^2 \right] \leq D + 
      \frac{(1+\kappa)^2n\sigma_g^2}{2L^2}+ \frac{(1+\kappa)^2nt\sigma_c^2}{2\alpha^2 L^2}  + nt\sigma_c^2,
    \end{split}
\end{equation*}
% \ew{any reason for not writing the two relations in the same order? It would make it obvious that they are off by one term $nt\sigma_c^2$.}
where $D=2 \mathbb{E}\left[
      \left\|\*y_0 - \*u^\star \right\|^2 \right]+2\left(1+4\nu^{-3}\right)\left\|\*u^\star\right\|^2$, $\*u^\star=[u_1^\star;u_2^\star;...;u_n^\star] \in \mathbb{R}^{np}$,  $u_i^\star = \arg \min_x f_i(x)$, $\nu=\frac{2\alpha\mu L}{\mu+L}$ and $\kappa=L/\mu$ is the condition number of Problem~\eqref{eq:consensus_prob}.
\end{lem}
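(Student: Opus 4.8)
The plan is to mirror the proof of Lemma~\ref{lem:bounded_iterates} almost verbatim, substituting the new communication-error bound from Lemma~\ref{lem:comm_error_q2} ($\mathbb{E}\!\left[\|\mathcal{E}^c_{t,k}\|^2\mid\mathcal{F}^0_k\right]\leq nt\sigma_c^2$) wherever the old bound $\frac{4n\sigma_c^2}{1-\beta^2}$ was used. First I would start from $\|\*y_{k+1}-\*u^\star\|^2 = \|\*x^t_k - \alpha\*g_k - \*u^\star\|^2$, add and subtract $\alpha\nabla\*f(\*x^t_k)$, and take the expectation conditional on $\mathcal{F}^t_k$; by Lemma~\ref{lem:comp_error} (which is unchanged under Q.2) the cross term vanishes and we pick up $\alpha^2 n\sigma_g^2$. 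Then I would combine Lemmas~\ref{lem:descent_f} and~\ref{lem:global_L_mu} to get the contraction $\|\*x^t_k-\alpha\nabla\*f(\*x^t_k)-\*u^\star\|^2\leq(1-\nu)\|\*x^t_k-\*u^\star\|^2$ with $\nu=\frac{2\alpha\mu L}{\mu+L}$, which is again identical to the main proof since it uses only the descent lemma and $\alpha<\frac{2}{\mu+L}$.

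The one genuinely new piece is expanding $\|\*x^t_k-\*u^\star\|^2$. Here the Q.2 recursion gives $\*x^t_k = \*Z^t\*y_k + \mathcal{E}^c_{t,k}$ just as before (the definition $\mathcal{E}^c_{t,k}:=\*x^t_k-\*Z^t\*y_k$ is the same), so the algebraic expansion is word-for-word the one in Lemma~\ref{lem:bounded_iterates}: add and subtract $\*Z^t\*y_k$ and $\*Z^t\*u^\star$, use $\pm2\langle a,b\rangle\leq c\|a\|^2+c^{-1}\|b\|^2$ with $c=\nu$, and invoke $\|\*Z^t\|=1$, $\|I-\*Z^t\|<2$ to arrive at
\[
\|\*x^t_k-\*u^\star\|^2 \leq \|\mathcal{E}^c_{t,k}\|^2 + (1+\nu)\|\*y_k-\*u^\star\|^2 + 4(1+\nu^{-1})\|\*u^\star\|^2 + 2\langle\mathcal{E}^c_{t,k},\*Z^t\*y_k-\*u^\star\rangle .
\]
Taking $\mathbb{E}[\cdot\mid\mathcal{F}^0_k]$, the inner-product term drops by the zero-mean part of Lemma~\ref{lem:comm_error_q2}, and $\mathbb{E}[\|\mathcal{E}^c_{t,k}\|^2\mid\mathcal{F}^0_k]\leq nt\sigma_c^2$. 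Assembling the three conditional bounds yields
\[
\mathbb{E}\!\left[\|\*y_{k+1}-\*u^\star\|^2\mid\mathcal{F}^0_k\right] \leq (1-\nu^2)\|\*y_k-\*u^\star\|^2 + \alpha^2 n\sigma_g^2 + (1-\nu)\,nt\sigma_c^2 + 4\nu^{-1}(1-\nu^2)\|\*u^\star\|^2 .
\]
Then I take total expectation, iterate the recursion over $0,1,\dots,k$, and bound $\sum_{h=0}^{k-1}(1-\nu^2)^h\leq\nu^{-2}$, $1-\nu<1$, $1-\nu^2<1$, giving
\[
\mathbb{E}\!\left[\|\*y_k-\*u^\star\|^2\right] \leq \mathbb{E}\!\left[\|\*y_0-\*u^\star\|^2\right] + \alpha^2\nu^{-2}n\sigma_g^2 + \nu^{-2}nt\sigma_c^2 + 4\nu^{-3}\|\*u^\star\|^2 .
\]
Finally, using $\|\*y_k\|^2\leq 2\|\*y_k-\*u^\star\|^2+2\|\*u^\star\|^2$, substituting $\nu=\frac{2\alpha L}{1+\kappa}$ so that $\nu^{-2}=\frac{(1+\kappa)^2}{4\alpha^2L^2}$, and collecting the terms into $D=2\mathbb{E}[\|\*y_0-\*u^\star\|^2]+2(1+4\nu^{-3})\|\*u^\star\|^2$ gives the first claimed bound on $\mathbb{E}[\|\*y_k\|^2]$. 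For the second bound, I write $\|\*x^t_k\|^2=\|\mathcal{E}^c_{t,k}+\*Z^t\*y_k\|^2\leq\|\mathcal{E}^c_{t,k}\|^2+\|\*y_k\|^2+2\langle\mathcal{E}^c_{t,k},\*Z^t\*y_k\rangle$ using $\|\*Z^t\|\leq1$, take $\mathbb{E}[\cdot\mid\mathcal{F}^0_k]$ so the cross term vanishes and $\mathbb{E}[\|\mathcal{E}^c_{t,k}\|^2\mid\mathcal{F}^0_k]\leq nt\sigma_c^2$, then take total expectation and plug in the $\mathbb{E}[\|\*y_k\|^2]$ bound, producing the extra $+nt\sigma_c^2$.

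The main obstacle is essentially bookkeeping rather than any conceptual difficulty: I need to track how the factor $\nu^{-2}=\frac{(1+\kappa)^2}{4\alpha^2L^2}$ interacts with the $\sigma_c^2$ term — in the original lemma the communication term was $\frac{4n\sigma_c^2}{1-\beta^2}$ and produced $\frac{2(1+\kappa)^2n\sigma_c^2}{\alpha(1-\beta^2)L^2}$, whereas here the term $nt\sigma_c^2$ produces $\frac{(1+\kappa)^2nt\sigma_c^2}{2\alpha^2L^2}$, and I must make sure the $\alpha$-powers and numerical constants come out exactly as stated (one factor of $1/2$ from $\nu^{-2}=\tfrac14(1+\kappa)^2/(\alpha^2L^2)$ times the $2$ from $\|\*y_k\|^2\leq2\|\cdots\|^2+\cdots$). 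I would also double-check that Lemma~\ref{lem:comp_error} indeed holds unchanged under Q.2 — it does, since its proof only uses Assumption~\ref{assum:tg_bound} and the definition of $\mathcal{E}^g_k$, neither of which depends on the consensus update — and that no step of the original proof secretly used the error-correction structure of~\eqref{eq:near_dgd_x} beyond the identity $\*x^t_k=\*Z^t\*y_k+\mathcal{E}^c_{t,k}$, which is definitional.
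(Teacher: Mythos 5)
Your proposal is correct and follows the paper's own proof essentially step for step: the same decomposition via $\mathcal{E}^g_{k+1}$ and $\mathcal{E}^c_{t,k}$, the same contraction from Lemmas~\ref{lem:descent_f} and~\ref{lem:global_L_mu}, the same conditioning on $\mathcal{F}^t_k$ then $\mathcal{F}^0_k$, and the only substantive change — replacing the bound $\frac{4n\sigma_c^2}{1-\beta^2}$ by $nt\sigma_c^2$ from Lemma~\ref{lem:comm_error_q2} — is exactly what the paper does. Your constant bookkeeping ($2\nu^{-2}nt\sigma_c^2=\frac{(1+\kappa)^2 nt\sigma_c^2}{2\alpha^2L^2}$) also matches the stated bound.
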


\begin{proof}
Consider,
\begin{equation*}
    \begin{split}
        \left\| \*y_{k+1} - \*u^\star \right\|^2 &=  \left\|\*x_k^{t} - \alpha \*g_k - \*u^\star \right\|^2\\
        &= \left\|\*x_k^{t} - \alpha \nabla \*f(\*x_k^{t}) - \*u^\star  - \alpha  \mathcal{E}^g_{k+1}  \right\|^2 \\
        &= \left\|\*x_k^{t} - \alpha \nabla \*f(\*x_k^{t}) - \*u^\star \right\|^2 + \alpha^2\left\| \mathcal{E}^g_{k+1} \right\|^2 - 2\alpha \left  \langle \*x_k^{t} - \alpha \nabla \*f(\*x_k^{t}) - \*u^\star, \mathcal{E}^g_{k+1}\right \rangle,
    \end{split}
\end{equation*}
where we used (\ref{eq:near_dgd_y_q2}) to get the first equality and added and subtracted $\alpha\nabla\*f\left(\*x^t_k\right)$ and applied the computation error definition $\mathcal{E}^g_{k+1}:=\*g_k - \nabla \*f\left(\*x^t_k\right)$ to obtain the second equality.

% \ew{Check and remove all sentences similar to this}
Taking the expectation conditional to $\mathcal{F}^t_k$ on both sides of
the relation above
% (\ref{eq:lem_bounded_xy_1}) \ew{if this is the only place you reference \eqref{eq:lem_bounded_xy_1}, then feel free to use above relation or above equation and eliminate the number. Try to use minimal number of equations would make the paper appear less intimidate. So try to go through the paper and eliminate unnecessary equation numbers and only keep them if you need to refer to them after a few other equations in between.} 
and applying Lemma~\ref{lem:comp_error} yields,
\begin{equation}
\label{eq:lem_bounded_xy_2_q2}
    \begin{split}
        \mathbb{E}\left[\left\| \*y_{k+1} - \*u^\star \right\|^2 \Big| \mathcal{F}^t_k \right]&\leq \left\|\*x_k^{t} - \alpha \nabla \*f(\*x_k^{t}) - \*u^\star \right\|^2 + \alpha^2 n \sigma_g^2.
    \end{split}
\end{equation}
For the first term on the right-hand side of (\ref{eq:lem_bounded_xy_2_q2}), combining Lemma~\ref{lem:descent_f}
with Lemma~\ref{lem:global_L_mu} and due to $\alpha < \frac{2}{\mu+L}$, we have
\begin{equation*}
\label{eq:lem_bounded_xy_3}
  \left\|\*x_k^{t} - \alpha \nabla \*f(\*x_k^{t}) - \*u^\star  \right\|^2 \leq (1-\nu)\left\|\*x_k^{t}-\*u^\star\right\|^2,
\end{equation*}
where $\nu=\frac{2\alpha \mu L}{\mu + L}=\frac{2\alpha L}{1+\kappa}<1$.

Expanding the term on the right hand side of the previous relation yields,
\begin{equation*}
\label{eq:lem_bounded_xy_4}
    \begin{split}
      \left\|\*x_k^{t}-\*u^\star\right\|^2 &= \left\|\mathcal{E}^{c}_{t,k} + \*Z^t\*y_k -\*Z^t\*u^\star + \*Z^t\*u^\star - \*u^\star\right\|^2\\
      &= \left\|\mathcal{E}^{c}_{t,k}\right\|^2 + \left\|\*Z^t\left(\*y_k - \*u^\star\right) - \left(I-\*Z^t\right)\*u^\star\right\|^2 + 2\left \langle \mathcal{E}^{c}_{t,k}, \*Z^t\*y_k - \*u^\star  \right \rangle \\
      &\leq \left\|\mathcal{E}^{c}_{t,k}\right\|^2 + \left(1+\nu\right)\left\|\*Z^t\left(\*y_k - \*u^\star\right)\right\|^2 +\left(1+\nu^{-1}\right)\left\| \left(I-\*Z^t\right)\*u^\star\right\|^2+ 2\left \langle \mathcal{E}^{c}_{t,k}, \*Z^t\*y_k - \*u^\star \right \rangle\\
      &\leq \left\|\mathcal{E}^{c}_{t,k}\right\|^2 + \left(1+\nu\right)\left\|\*y_k - \*u^\star\right\|^2 +4\left(1+\nu^{-1}\right)\left\|\*u^\star\right\|^2 + 2\left \langle \mathcal{E}^{c}_{t,k}, \*Z^t\*y_k - \*u^\star  \right \rangle,
    \end{split}
\end{equation*}
where we added and subtracted the quantities $\*Z^t\*y_k$ and $\*Z^t\*u^\star$ and applied the communication error definition $\mathcal{E}^{c}_{t,k}:=\*x^t_k - \*Z^t\*y_k$ to get the first equality. We used the standard inequality $\pm2\langle a,b\rangle \leq c\|a\|^2 + c^{-1}\|b\|^2$ that holds for any two vectors $a,b$ and positive constant $c>0$ to get the first inequality.
% \ew{This would be for the first inequality (not third) or the third line.}
Finally, we derive the last inequality using the relations $\left\|\*Z^t \right\|=1$ and $\left\|I-\*Z^t \right\|<2$ that hold due to Assumption~\ref{assum:consensus_mat}.

Due to the fact that $\mathcal{F}^0_k \subseteq \mathcal{F}^t_k$, combining the preceding three relations and taking the expectation conditional on $\mathcal{F}^0_k$ on both sides of~\eqref{eq:lem_bounded_xy_2_q2} yields,
 \begin{equation*}
     \begin{split}
        \mathbb{E}\left[\left\| \*y_{k+1} - \*u^\star \right\|^2 \Big| \mathcal{F}^0_k \right]&\leq \left(1-\nu^2\right) 
        \left\|\*y_k - \*u^\star \right\|^2 + 
        \alpha^2n\sigma_g^2 +  (1-\nu)\mathbb{E}\left[\left\|\mathcal{E}^c_{t,k}\right\|^2\big|\mathcal{F}^0_k\right]\\
        & \quad +  2(1-\nu)\mathbb{E}\left[\left \langle \mathcal{E}^{c}_{t,k}, \*Z^t\*y_k - \*u^\star  \right \rangle \big|\mathcal{F}^0_k\right]+ 4\nu^{-1}\left(1-\nu^2\right)\left\|\*u^\star\right\|^2\\
        &\leq \left(1-\nu^2\right) 
        \left\|\*y_k - \*u^\star \right\|^2 + 
        \alpha^2n\sigma_g^2 +  (1-\nu)nt\sigma_c^2+ 4\nu^{-1}\left(1-\nu^2\right)\left\|\*u^\star\right\|^2,
    \end{split}
 \end{equation*}
 where we applied Lemma~\ref{lem:comm_error_q2} to get the last inequality.
 
Taking the full expectation on both sides of the relation above and applying recursively over iterations $0, 1, \ldots, k$ yields,
\begin{equation*}
    \begin{split}
        \mathbb{E}\left[\left\| \*y_{k} - \*u^\star \right\|^2\right]
        &\leq \left(1-\nu^2\right)^k \mathbb{E}\left[
      \left\|\*y_0 - \*u^\star \right\|^2 \right]+
        \alpha^2\nu^{-2}n\sigma_g^2 +  (\nu^{-2}-\nu^{-1})nt\sigma_c^2+ 4\left(\nu^{-3}-\nu^{-1}\right)\left\|\*u^\star\right\|^2\\
        &\leq \mathbb{E}\left[
      \left\|\*y_0 - \*u^\star \right\|^2 \right]+
        \alpha^2\nu^{-2}n\sigma_g^2 +  \nu^{-2}nt\sigma_c^2 + 4\nu^{-3}\left\|\*u^\star\right\|^2,
        \end{split}
\end{equation*}
where we used $\sum_{h=0}^{k-1}\left(1-\nu^2\right)^h\leq \nu^{-2}$ to get the first inequality
% the fact that \ew{ $\sum_{h=0}^k-1 (1-\nu)^h\leq \nu^2$ to derive the first inequality and}
and $\nu > 0$ to get the second inequality.
 % \ew{Do we always have $\nu<1$? We need that to use the geometric series and we should justify that.}
Moreover, the following statement holds
\begin{equation*}
\begin{split}
    \left\| \*y_{k}\right\|^2 &= \left\| \*y_{k} - \*u^\star + \*u^\star \right\|^2\\
    &\leq 2\left\| \*y_{k} - \*u^\star \right\|^2+2\left\| \*u^\star \right\|^2.
    \end{split}
\end{equation*}

After taking the total expectation on both sides of the above relation, we obtain
\begin{equation}
\label{eq:bound_yk_q2}
\begin{split}
    \mathbb{E}\left[\left\| \*y_{k}\right\|^2 \right]& \leq 2\mathbb{E}\left[\left\| \*y_{k} - \*u^\star \right\|^2\right]+2\left\| \*u^\star \right\|^2\\
    &\leq 2 \mathbb{E}\left[
      \left\|\*y_0 - \*u^\star \right\|^2 \right]+ 
      2 \alpha^2\nu^{-2}n\sigma_g^2 + 2 \nu^{-2}nt\sigma_c^2 + 2\left(1+4\nu^{-3}\right)\left\|\*u^\star\right\|^2.
    \end{split}
\end{equation}
Applying the definitions of $D$ and $\kappa$ to (\ref{eq:bound_yk_q2}) yields the first result of this lemma.

Finally, the following statement also holds 
\begin{equation*}
    \begin{split}
        \left\| \*x^t_k \right\|^2 &= \left\| \mathcal{E}^{c}_{t,k} + \*Z^t\*y_k\right\|^2\\
        &= \left\| \mathcal{E}^{c}_{t,k}\right\|^2 +\left\| \*Z^t\*y_k\right\|^2 + 2\left \langle\mathcal{E}^{c}_{t,k}, \*Z^t\*y_k \right \rangle\\
         &\leq \left\| \mathcal{E}^{c}_{t,k}\right\|^2 +\left\| \*y_k\right\|^2  + 2\left \langle\mathcal{E}^{c}_{t,k}, \*Z^t\*y_k \right \rangle,
    \end{split}
\end{equation*}
where we used the non-expansiveness of $\*Z$ for the last inequality.

Taking the expectation conditional  on $\mathcal{F}^0_k$ on both sides of the preceding relation and applying Lemma \ref{lem:comm_error_q2} yields,
\begin{equation}
\label{eq:bound_xk}
    \begin{split}
        \mathbb{E}\left[\left\| \*x^t_k \right\|^2 \Big | \mathcal{F}^0_k \right]  &\leq  nt\sigma_c^2 +\left\| \*y_k\right\|^2.
    \end{split}
\end{equation}
Taking the total expectation on both sides of the above relation, applying (\ref{eq:bound_yk_q2})  and the definitions of $D$ and $\kappa$ concludes this proof.

\end{proof}
% %%%%%%%%%%%%%%%%%%%%%%%%%%%%%%%%%%%
% %%%%%%%%%%%%%%%%%%%%%%%%%%%%%%%%%%%

% We will now use Lemma~\ref{lem:bounded_iterates} to prove that the distance between the local and the average iterates generated by our method is bounded. This distance can be interpreted as a measure of consensus violation, where small values indicate \hi{small disagreement between nodes}.
% % \ew{usually consensus means all agree, so there is no "tight consensus". Consider "small disagreement" or "close to consensus" or other expressions}.

\begin{lem}
\textbf{(Bounded distance to average (Q.2))}
\label{lem:bounded_variance_q2}
Let $x_{i,k}^t$ and $y_{i,k}$ be the local iterates produced by S-NEAR-DGD$^t$ at node $i$ and iteration count $k$ and let $\bar{x}_k^t:=\sum_{i=1}^n x^t_{i,k}$ and $\bar{y}_k:=\sum_{i=1}^n y_{i,k}$ denote the average iterates across all nodes. Then the distance between the local and average iterates is bounded in expectation for $i=1,...,n$ and $k=1,2,...$, namely
\begin{equation*}
\begin{split}
    \mathbb{E}\left[\left\|x^t_{i,k} - \bar{x}^t_k \right\|^2\right] &\leq \mathbb{E}\left[\left\|\*x^t_k - \*M \*x^t_k \right\|^2\right] \leq \beta^{2t}D + 
      \frac{\beta^{2t}(1+\kappa)^2n\sigma_g^2}{2L^2}+ \frac{\beta^{2t}(1+\kappa)^2nt\sigma_c^2}{2\alpha^2 L^2} + nt\sigma_c^2,
     \end{split}
\end{equation*}
and
\begin{equation*}
\begin{split}
    \mathbb{E} \left[\left\|y_{i,k} - \bar{y}_k \right\|^2 \right]&\leq \mathbb{E}\left[ \left\|\*y_k - \*M \*y_k \right\|^2 \right]\leq D + 
      \frac{(1+\kappa)^2n\sigma_g^2}{2L^2}+ \frac{(1+\kappa)^2nt\sigma_c^2}{2\alpha^2 L^2},
    \end{split}
\end{equation*}
where
% \ew{absolute value of the eigenvalue with the second largest absolute value} \sout{second largest magnitude eigenvalue} \ew{the original writing was looking for the eigenvalue with the second largest magnitude, so could be negative. Also since Z is real and symmetric, all eigenvalues are real and symmetric, no need to use magnitude, could just use absolute value.} of $\*Z$, 
$\*M = \left( \frac{1_n 1_n^T}{n} \otimes I_p \right) \in \mathbb{R}^{np}$ is the averaging matrix, constant $D$ is defined in Lemma~\ref{lem:bounded_iterates_q2} and $\kappa=L/\mu$ is the condition number of Problem~\eqref{eq:consensus_prob}. 
% \ew{need to define what $1_n$ means}
 
\end{lem}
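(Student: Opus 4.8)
The plan is to mirror the proof of Lemma~\ref{lem:bounded_variance}, substituting the new communication-error bound from Lemma~\ref{lem:comm_error_q2} in place of the one from Lemma~\ref{lem:comm_error}, and the new iterate bounds from Lemma~\ref{lem:bounded_iterates_q2} in place of those from Lemma~\ref{lem:bounded_iterates}. First I would observe that $\sum_{i=1}^n \|x^t_{i,k} - \bar{x}^t_k\|^2 = \|\*x^t_k - \*M\*x^t_k\|^2$, so each individual term $\|x^t_{i,k} - \bar{x}^t_k\|^2$ is bounded above by the full sum, and likewise $\|y_{i,k} - \bar{y}_k\|^2 \leq \|\*y_k - \*M\*y_k\|^2$. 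It therefore suffices to bound $\mathbb{E}[\|\*x^t_k - \*M\*x^t_k\|^2]$ and $\mathbb{E}[\|\*y_k - \*M\*y_k\|^2]$.

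For the $\*x^t_k$ term I would use the identity $\*x^t_k = \mathcal{E}^c_{t,k} + \*Z^t\*y_k$, which follows from the definition $\mathcal{E}^c_{t,k} := \*x^t_k - \*Z^t\*y_k$ in Lemma~\ref{lem:comm_error_q2}, add and subtract $\*M\*y_k$, and invoke $\*M\*Z^t = \*M$ (a consequence of the double stochasticity of $\*W$) to write $\*x^t_k - \*M\*x^t_k = (I-\*M)\mathcal{E}^c_{t,k} + (\*Z^t - \*M)\*y_k$. Expanding the square norm and using $\|I-\*M\| = 1$ together with the spectral identity $\|\*Z^t - \*M\| = \beta^t$ (valid because $\*W^t$ and $\tfrac{1}{n}1_n 1_n^T$ share the top eigenpair, so their difference has spectral radius $\beta^t$) gives
\begin{equation*}
\|\*x^t_k - \*M\*x^t_k\|^2 \leq \|\mathcal{E}^c_{t,k}\|^2 + \beta^{2t}\|\*y_k\|^2 + 2\big\langle (I-\*M)\mathcal{E}^c_{t,k},\, (\*Z^t - \*M)\*y_k\big\rangle.
\end{equation*}
Taking the expectation conditional on $\mathcal{F}^0_k$ annihilates the cross term, since $\*y_k$ is $\mathcal{F}^0_k$-measurable and $\mathbb{E}[\mathcal{E}^c_{t,k}\mid\mathcal{F}^0_k]=\mathbf{0}$ by Lemma~\ref{lem:comm_error_q2}, while the same lemma bounds $\mathbb{E}[\|\mathcal{E}^c_{t,k}\|^2\mid\mathcal{F}^0_k] \leq nt\sigma_c^2$. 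This yields $\mathbb{E}[\|\*x^t_k - \*M\*x^t_k\|^2\mid\mathcal{F}^0_k] \leq nt\sigma_c^2 + \beta^{2t}\|\*y_k\|^2$; taking total expectation and substituting the bound on $\mathbb{E}[\|\*y_k\|^2]$ from Lemma~\ref{lem:bounded_iterates_q2} produces the first claimed inequality, where the factor $\beta^{2t}$ propagates onto the $D$, $\sigma_g$ and first $\sigma_c$ contributions.

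For the $\*y_k$ term the argument is shorter: $\|\*y_k - \*M\*y_k\|^2 = \|(I-\*M)\*y_k\|^2 \leq \|\*y_k\|^2$ because $\|I-\*M\| = 1$, and taking total expectation and applying Lemma~\ref{lem:bounded_iterates_q2} finishes the proof. I do not anticipate any genuine difficulty here, as the content is essentially bookkeeping; the one point requiring care is the cross-term cancellation, where one must be explicit that the filtration $\mathcal{F}^0_k$ contains $\*y_k$ (so that $(\*Z^t-\*M)\*y_k$ behaves as a constant under the conditional expectation) but not the quantization noise generating $\mathcal{E}^c_{t,k}$, so that Lemma~\ref{lem:comm_error_q2} applies verbatim. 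A secondary bookkeeping point is to keep the factor $\beta^{2t}$ (rather than $1$) attached to the right terms in the $\*x^t_k$ bound, which is exactly what distinguishes it from the $\*y_k$ bound.
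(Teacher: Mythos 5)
Your proposal is correct and follows essentially the same route as the paper's proof: the same decomposition $\*x^t_k - \*M\*x^t_k = (I-\*M)\mathcal{E}^c_{t,k} + (\*Z^t-\*M)\*y_k$, the same use of $\|\*Z^t-\*M\|=\beta^t$ and the cross-term cancellation under conditioning on $\mathcal{F}^0_k$, and the same substitution of Lemmas~\ref{lem:comm_error_q2} and~\ref{lem:bounded_iterates_q2}. Your explicit remark about $\*y_k$ being $\mathcal{F}^0_k$-measurable makes the cross-term step slightly more careful than the paper's, but the argument is identical in substance.
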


\begin{proof}
Observing that $\sum_{i=1}^n \left\|x^t_{i,k}-\bar{x}^t_k \right\|^2 = \left\| \*x^t_k - \*M \*x^t_k \right\|^2$, we obtain
\begin{equation}
\label{eq:lem_bounded_diff_x_q2}
    \left\|x^t_{i,k}-\bar{x}^t_k \right\|^2 \leq \left\| \*x^t_k - \*M \*x^t_k \right\|^2,\forall i=1,...,n.
\end{equation}
% \ew{Could also consider writing $\sum_{i=1}^n \left\|x^t_{i,k}-\bar{x}^t_k \right\|^2 = \left\| \*x^t_k - \*M \*x^t_k \right\|^2$.}
We can bound the right-hand side of (\ref{eq:lem_bounded_diff_x_q2}) as
\begin{equation}
\label{eq:lem2_x1_q2}
    \begin{split}
        \left\|\*x^t_k - \*M \*x_k\right\|^2 &=  \left\|\mathcal{E}^{c}_{t,k} + \*Z^t\*y_k- \*M \*x_k - \*M\*y_k + \*M\*y_k \right\|^2\\
        &=\left\|\mathcal{E}^{c}_{t,k} + \left(\*Z^t-\*M\right)\*y_k- \*M\*x_k +\*M \*Z^t\*y_k \right\|^2\\
        &=\left\|\left(I-\*M\right)\mathcal{E}^{c}_{t,k} \right\|^2+\left\| \left(\*Z^t-\*M\right)\*y_k \right\|^2 + 2\left \langle\left(I-\*M\right)\mathcal{E}^{c}_{t,k} ,\left(\*Z^t-\*M\right)\*y_k  \right \rangle\\
        &\leq \left\|\mathcal{E}^{c}_{t,k} \right\|^2+\beta^{2t}\left\|\*y_k \right\|^2 + 2\left \langle\left(I-\*M\right)\mathcal{E}^{c}_{t,k} ,\left(\*Z^t-\*M\right)\*y_k  \right \rangle,
    \end{split}
\end{equation}
where we applied the definition of the communication error $\mathcal{E}^c_{t,k}$ of Lemma~\ref{lem:comm_error_q2}
and added and subtracted $\*M\*y_k$ to obtain the first equality, and used the fact that $\*M \*Z^t = \*M$ to get the second equality.
% \ew{nice trick!}. \ew{Shouldn't the third line be equality?} 
We derive the last inequality from Cauchy-Schwarz and the spectral properties of $\*Z^t = \*W^t \otimes I_p$ and $\*M = \left(\frac{1_n 1_n^T}{n}\right) \otimes I_p$.

% \ew{the last two steps need some  explanation. Like in the eigenspace associated with vector of all one's something happens and the orthogonal space, something else happens...}

Taking the expectation conditional to $\mathcal{F}^0_k$ on both sides of (\ref{eq:lem2_x1_q2}) and applying Lemma~\ref{lem:comm_error_q2} yields,
\begin{equation*}
    \begin{split}
      \mathbb{E}\left[ \left\|\*x^t_k - \*M \*x_k\right\|^2 \Big | \mathcal{F}^0_k \right] &\leq nt\sigma_c^2+\beta^{2t}\left\|\*y_k \right\|^2.
    \end{split}
\end{equation*}
Taking the total expectation on both sides and applying Lemma~\ref{lem:bounded_iterates_q2} yields the first result of this lemma.

Similarly, the following inequality holds for the $\*y_k$ iterates
\begin{equation}
\label{eq:lem_bounded_diff_y_q2}
    \left\|y_{i,k}-\bar{y}_k \right\|^2 \leq \left\| \*y_k - \*M \*y_k \right\|^2,\forall i=1,...,n.
\end{equation}
For the right-hand side of (\ref{eq:lem_bounded_diff_y_q2}), we have
\begin{equation*}
% \label{eq:lem2_y1}
    \begin{split}
        \left\|\*y_k - \*M\*y_k \right\|^2 &= \left\| \left(I-\*M\right)\*y_k\right\|^2\\
        &\leq \left\| \*y_k \right\|^2,
\end{split}
\end{equation*}
where we have used the fact that $\left\|I-\*M\right\|=1$.

Taking the total expectation on both sides and applying Lemma~\ref{lem:bounded_iterates_q2} concludes this proof.

\end{proof}
% %%%%%%%%%%%%%%%%%%%%%%%%%%%%%%%%%%%
% %%%%%%%%%%%%%%%%%%%%%%%%%%%%%%%%%%%
% The bounds established in Lemma~\ref{lem:bounded_variance} indicate that there are at least three factors preventing the local iterates produced by S-NEAR-DGD$^t$ from reaching consensus: errors related to network connectivity, represented by the quantity $\beta$, and errors caused by imperfect computation and communication containing the constants $\sigma_g$ and $\sigma_c$ respectively.

\begin{thm}
\textbf{(Convergence of S-NEAR-DGD$^t$ (Q.2))}
\label{thm:bounded_dist_min_q2}
Let $\bar{x}^t_k:=\frac{1}{n}\sum_{i=1}^n x^t_{i,k}$ denote the average of the local iterates generated by~\eqref{eq:near_dgd_x_q2} from initial point $\*y_0$
at iteration $k$ and let the steplength $\alpha$ satisfy
\begin{equation*}
    \alpha < \min \left \{\frac{2}{\mu+L},\frac{2}{\mu_{\bar{f}}+L_{\bar{f}}} \right\},
\end{equation*}
where $\mu=\min_i L_i$, $L = \max_i L_i$, $\mu_{\bar{f}}=\frac{1}{n}\sum_{i=1}^n\mu_i$ and $L_{\bar{f}}=\frac{1}{n}\sum_{i=1}^n L_i$.

Then the distance of $\bar{x}^t_k$ to the optimal solution $x^\star$ of Problem~(\ref{eq:consensus_prob}) is bounded in expectation for $k=1,2,...$,
\begin{equation}
\label{eq:thm_xk1_q2}
    \begin{split}
    \mathbb{E}\left[\left\|\bar{x}^t_{k+1} - x^\star\right\|^2  \right]
    &\leq \rho \mathbb{E}\left[ \left\| \bar{x}^t_{k} - x^\star\right\|^2\right] + \frac{\alpha\beta^{2t}\rho L^2D}{n\gamma_{\bar{f}}} + \frac{\alpha^2 \sigma_g^2  }{n} \\
    &\quad +  \frac{\alpha\beta^{2t}\left(1+\kappa\right)^2\rho\sigma_g^2}{2\gamma_{\bar{f}}} + \frac{t\sigma_c^2}{n} + \frac{\alpha\rho L^2t\sigma_c^2}{\gamma_{\bar{f}}} +  \frac{\beta^{2t}\left(1+\kappa\right)^{2}\rho t\sigma_c^2}{2\alpha\gamma_{\bar{f}}},
    \end{split}
\end{equation}
and
\begin{equation}
\label{eq:thm_xk2_q2}
    \begin{split}
    \mathbb{E}\left[\left\|\bar{x}^t_{k} - x^\star\right\|^2  \right]
    &\leq  \rho^k \mathbb{E}\left[ \left\| \bar{x}_0 - x^\star\right\|^2\right] + \frac{\beta^{2t}\rho L^2D}{n\gamma_{\bar{f}}^2} + \frac{\alpha \sigma_g^2  }{n\gamma_{\bar{f}}} \\
    &\quad +  \frac{\beta^{2t} \left(1+\kappa \right)^2\rho\sigma_g^2}{2\gamma_{\bar{f}}^2} + \frac{t\sigma_c^2}{n\alpha\gamma_{\bar{f}}} + \frac{\rho L^2t\sigma_c^2}{\gamma_{\bar{f}}^2} +  \frac{\beta^{2t}\left(1+\kappa\right)^2\rho t\sigma_c^2}{2\alpha^2\gamma_{\bar{f}}^2},
    \end{split}
\end{equation}
where $\bar{x}_0=\frac{1}{n}\sum_{i=1}^n y_{i,0}$, $\rho=1-\alpha\gamma_{\bar{f}}$, $\gamma_{\bar{f}}=\frac{\mu_{\bar{f}}L_{\bar{f}}}{\mu_{\bar{f}}+L_{\bar{f}}}$, $\kappa=L/\mu$ is the condition number of Problem~\eqref{eq:consensus_prob} and the constant $D$ is defined in Lemma~\ref{lem:bounded_iterates_q2}. 
% \ew{Since we use $\beta$ across multiple lemmas/theorems, it's better to pull the definition outside.}

\end{thm}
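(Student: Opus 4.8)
The plan is to follow the proof of Theorem~\ref{thm:bounded_dist_min} almost verbatim, with one essential modification: under variant Q.2 the network average is \emph{not} preserved by the consensus step, so the identity $\bar{x}^t_{k+1}=\bar{y}_{k+1}$ used in the Q.1 proof must be replaced by an approximate version. First I would show, using~\eqref{eq:near_dgd_x_q2} and the double stochasticity of $\*W$ (so that $(1_n^T\otimes I_p)\*Z = 1_n^T\otimes I_p$), that $\bar{x}^j_{k} = \bar{q}^j_k = \bar{x}^{j-1}_{k} + \bar{\epsilon}^j_{k}$ with $\bar{\epsilon}^j_k = \frac{1}{n}\sum_{i=1}^n \epsilon^j_{i,k}$, and hence by telescoping over $j=1,\dots,t$ that $\bar{x}^t_{k+1} = \bar{y}_{k+1} + \sum_{j=1}^t \bar{\epsilon}^j_{k+1}$. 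Expanding $\|\bar{x}^t_{k+1}-x^\star\|^2$ about $\bar{y}_{k+1}-x^\star$ then produces, besides $\|\bar{y}_{k+1}-x^\star\|^2$, a cross term and a pure-noise term.

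Next I would control those two extra terms by conditioning on $\mathcal{F}^0_{k+1}$, the $\sigma$-algebra that contains $\bar{y}_{k+1}$ but none of the consensus noise $\{\epsilon^j_{\cdot,k+1}\}_{j\geq1}$ generated during the consensus phase of iteration $k+1$. Since $\mathbb{E}[\epsilon^j_{i,k+1}\mid\mathcal{F}^{j-1}_{k+1}]=0$ and $\mathcal{F}^0_{k+1}\subseteq\mathcal{F}^{j-1}_{k+1}$, the tower property gives $\mathbb{E}[\bar{\epsilon}^j_{k+1}\mid\mathcal{F}^0_{k+1}]=0$, so the cross term vanishes. For the pure-noise term, independence across consensus rounds $j$ eliminates the inner products $\langle\bar{\epsilon}^{j_1}_{k+1},\bar{\epsilon}^{j_2}_{k+1}\rangle$, and independence across nodes $i$ together with Assumption~\ref{assum:tc_bound} gives $\mathbb{E}[\|\bar{\epsilon}^j_{k+1}\|^2\mid\mathcal{F}^0_{k+1}]\leq\sigma_c^2/n$; summing over $j$ yields $\mathbb{E}[\|\bar{x}^t_{k+1}-x^\star\|^2\mid\mathcal{F}^0_{k+1}]\leq\|\bar{y}_{k+1}-x^\star\|^2 + t\sigma_c^2/n$. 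This is exactly where the $t\sigma_c^2$ scaling absent under Q.1 enters, reflecting that the averaged quantization noise now accumulates over consensus rounds instead of cancelling.

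Then, since $\mathcal{F}^t_k\subseteq\mathcal{F}^0_{k+1}$, I would take $\mathbb{E}[\cdot\mid\mathcal{F}^t_k]$ of the preceding bound and apply Lemma~\ref{lem:descent_avg_iter} to $\mathbb{E}[\|\bar{y}_{k+1}-x^\star\|^2\mid\mathcal{F}^t_k]$ — that lemma depends only on the gradient-step recursion $\bar{y}_{k+1}=\bar{x}^t_k-\alpha\bar{g}_k$, which is unchanged under Q.2, so it applies as stated. Taking total expectation and substituting the bound on $\mathbb{E}[\Delta_{\*x}]=\mathbb{E}[\|\*x^t_k-\*M\*x^t_k\|^2]$ from Lemma~\ref{lem:bounded_variance_q2}, then collecting and expanding constants, produces the one-step recursion~\eqref{eq:thm_xk1_q2}. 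Finally, since $\rho=1-\alpha\gamma_{\bar{f}}<1$, unrolling this recursion over $k$ iterations and using $\sum_{h=0}^{k-1}\rho^h\leq(1-\rho)^{-1}=(\alpha\gamma_{\bar{f}})^{-1}$ gives~\eqref{eq:thm_xk2_q2}.

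The main obstacle is the step described in the first two paragraphs: unlike the Q.1 case where $\bar{x}^t_{k+1}=\bar{y}_{k+1}$ holds identically, here one must carefully track the accumulated averaged-quantization noise $\sum_{j=1}^t\bar{\epsilon}^j_{k+1}$, condition on the right filtration ($\mathcal{F}^0_{k+1}$, so that $\bar{y}_{k+1}$ is measurable while the fresh consensus noise is not), and invoke both layers of independence — across nodes to recover the $1/n$ factor, and across consensus rounds to get the linear-in-$t$ bound. Once the term $t\sigma_c^2/n$ is in hand, the remainder is bookkeeping identical to the proof of Theorem~\ref{thm:bounded_dist_min}.
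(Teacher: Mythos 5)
Your proposal matches the paper's proof essentially step for step: the same telescoping identity $\bar{x}^t_{k+1}=\bar{y}_{k+1}+\sum_{j=1}^{t}\bar{\epsilon}^{\,j}_{k+1}$ replacing the Q.1 average-preservation identity, the same conditioning on $\mathcal{F}^0_{k+1}$ to kill the cross term and extract the $t\sigma_c^2/n$ noise contribution via independence across nodes and rounds, and the same subsequent application of Lemma~\ref{lem:descent_avg_iter}, Lemma~\ref{lem:bounded_variance_q2}, and the geometric-sum unrolling with $1-\rho=\alpha\gamma_{\bar{f}}$. The argument is correct as outlined.
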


\begin{proof}
Applying ($\ref{eq:near_dgd_x_q2}$) to the $(k+1)^{th}$ iteration and calculating the average iterate $\bar{x}^j_{k+1}=\frac{1}{n}\sum_{i=1}x_{i,k+1}^j$ across all $n$ nodes, we obtain,
% \ew{be specific, what does "averaging" mean?} 
\begin{equation*}
\begin{split}
    \bar{x}^j_{k+1} &=  \bar{q}^{j}_{k+1}\text{, }j=1,...,t,
    \end{split}
\end{equation*}
where $\bar{q}^{j}_{k+1} = \frac{1}{n}\sum_{i=1}^n q^j_{i,k+1} = \frac{1}{n}\sum_{i=1}^n \left( \epsilon^{j}_{i,k+1} + x^{j-1}_{i,k+1}\right)$ and $x^0_{i,k+1}=y_{i,k+1}$.

Hence, we can show that,
\begin{equation*}
    \begin{split}
         \bar{x}^t_{k+1} - \bar{y}_{k+1} &=  \bar{q}^t_{k+1} -\sum_{j=1}^{t-1}\bar{q}^j_{k+1} + \sum_{j=1}^{t-1}\bar{q}^j_{k+1}-\bar{x}^{0}_{k+1} \\
          &=  \sum_{j=1}^t\left(\bar{q}^j_{k+1} -\bar{x}^{j-1}_{k+1}\right)\\
         &= \frac{1}{n}\sum_{j=1}^{t}\sum_{i=1}^n   \epsilon^j_{i,k+1},
        %  &=\frac{1}{n^2}\left(\sum_{i=1}^n \left\| \sum_{j=0}^{t-1} \epsilon^j_{i,k+1}\right\|^2 + \text{cross terms}\right),
    \end{split}
\end{equation*}
where we added and subtracted the quantity $\sum_{j=1}^{t}\bar{q}^j_{k+1}$ to obtain the first equality. 
% \ew{Need parenthesis for the second line. Where does the square come from in the last line?}

Then the following holds for the distance of $\bar{x}^t_{k+1}$ to $x^\star$,
\begin{equation}
\label{eq:lem_bounded_min200_q2}
    \begin{split}
    \left\|\bar{x}^t_{k+1} - x^\star\right\|^2 &=\left\|\bar{x}^t_{k+1} - \bar{y}_{k+1} + \bar{y}_{k+1}- x^\star\right\|^2\\
    &\leq \frac{1}{n^2}\left\|\sum_{j=1}^{t}\sum_{i=1}^n   \epsilon^j_{i,k+1}\right\|^2 + \left\| \bar{y}_{k+1} - x^\star\right\|^2 + \frac{2}{n}\left\langle \sum_{j=1}^{t}\sum_{i=1}^n   \epsilon^j_{i,k+1}, \bar{y}_{k+1} - x^\star\right \rangle,
    \end{split}
\end{equation}
where we added and subtracted $\bar{y}_{k+1}$ to get the first equality.

By linearity of expectation and Assumption~\ref{assum:tc_bound} we obtain,
\begin{equation*}
    \mathbb{E}\left[\sum_{j=1}^{t}\sum_{i=1}^n   \epsilon^j_{i,k+1}\bigg|\mathcal{F}^0_{k+1}\right] = \mathbf{0}.
\end{equation*}

Moreover, observing that $\mathbb{E}\left[\left\langle \epsilon^{j_1}_{i_1,k+1}, \epsilon^{j_2}_{i_2,k+1} \right \rangle\big|\mathcal{F}^0_{k+1}\right]=\mathbf{0}$ for $j_1\neq j_2$ and $i_1 \neq i_2$ due to Assumption~\ref{assum:tc_bound}, we acquire,
\begin{equation*}
\begin{split}
    \frac{1}{n^2}\mathbb{E}\left[\left\|\sum_{j=1}^{t}\sum_{i=1}^n   \epsilon^j_{i,k+1}\right\|^2\Bigg|\mathcal{F}^0_{k+1}\right] &=  \frac{1}{n^2}\sum_{j=1}^{t}\sum_{i=1}^n  \mathbb{E}\left[\left\| \epsilon^j_{i,k+1}\right\|^2\bigg|\mathcal{F}^0_{k+1}\right]\leq \frac{t\sigma_c^2}{n},
    \end{split}
\end{equation*}
where we applied Assumption~\ref{assum:tc_bound} to get the last inequality.

Taking the expectation conditional on $\mathcal{F}^0_{k+1}$ on both sides of~(\ref{eq:lem_bounded_min200_q2}) and substituting the two preceding relations yield,
\begin{equation}
\label{eq:lem_bounded_min201_q2}
    \begin{split}
    \mathbb{E}\left[\left\|\bar{x}^t_{k+1} - x^\star\right\|^2 \Big| \mathcal{F}^0_{k+1}\right] 
    &\leq  \left\| \bar{y}_{k+1} - x^\star\right\|^2 + \frac{t\sigma_c^2}{n}.
    \end{split}
\end{equation}
Due to the fact that $\mathcal{F}^t_{k} \subseteq \mathcal{F}^0_{k+1}$, taking the expectation conditional to $\mathcal{F}^t_{k}$  on both sides of (\ref{eq:lem_bounded_min201_q2}) and applying Lemma~\ref{lem:descent_avg_iter} and the tower property of expectation yields
 \begin{equation*}
    \begin{split}
    \mathbb{E}\left[\left\|\bar{x}^t_{k+1} - x^\star\right\|^2 \Big| \mathcal{F}^t_{k}\right] 
    &\leq   \rho  \left\| \bar{x}^t_{k} - x^\star\right\|^2 + \frac{\alpha^2 \sigma_g^2}{n} +  \frac{\alpha \rho L^2 \left\|\*x^t_k - \*M \*x^t_k\right\|^2}{n\gamma_{\bar{f}}} + \frac{t\sigma_c^2}{n}.
    \end{split}
\end{equation*}
After taking the full expectation on both sides of the relation above and applying Lemma~\ref{lem:bounded_variance_q2} we acquire,
\begin{equation}
\label{eq:lem_bounded_min6_q2}
    \begin{split}
    \mathbb{E}\left[\left\|\bar{x}^t_{k+1} - x^\star\right\|^2  \right]
    &\leq \rho \mathbb{E}\left[ \left\| \bar{x}^t_{k} - x^\star\right\|^2\right] + \frac{\alpha\beta^{2t}\rho L^2D}{n\gamma_{\bar{f}}} + \frac{\alpha^2\sigma_g^2  }{n} +  \frac{\alpha\beta^{2t}(1+\kappa)^{2}\rho \sigma_g^2}{2\gamma_{\bar{f}}} + \frac{t\sigma_c^2}{n} + \frac{\alpha\rho L^2t\sigma_c^2}{\gamma_{\bar{f}}} +  \frac{\beta^{2t}(1+\kappa)^{2}\rho t\sigma_c^2}{2\alpha\gamma_{\bar{f}}}.
    \end{split}
\end{equation}
We notice that $\rho<1$ and therefore after applying \eqref{eq:lem_bounded_min6_q2} recursively and using the relation $\sum_{h=0}^{k-1} \rho^h \leq (1-\rho)^{-1}$ we obtain,
\begin{equation*}
    \begin{split}
    \mathbb{E}\left[\left\|\bar{x}^t_{k} - x^\star\right\|^2  \right]
    \leq & \rho^k \mathbb{E}\left[ \left\| \bar{x}_0- x^\star\right\|^2\right] + \frac{\alpha\beta^{2t}\rho L^2D}{n\gamma_{\bar{f}}(1-\rho)} + \frac{\alpha^2 \sigma_g^2  }{n(1-\rho)} \\
    &\quad +  \frac{\alpha\beta^{2t}(1+\kappa)^{2}\rho \sigma_g^2}{2\gamma_{\bar{f}}(1-\rho)} + \frac{t\sigma_c^2}{n(1-\rho)} + \frac{\alpha\rho L^2t\sigma_c^2}{\gamma_{\bar{f}}(1-\rho)} +  \frac{\beta^{2t}(1+\kappa)^{2}\rho t\sigma_c^2}{2\alpha\gamma_{\bar{f}}(1-\rho)}.
    \end{split}
\end{equation*}
Applying the relation $1-\rho=\alpha \gamma_{\bar{f}}$ completes the proof.

\end{proof}

% Theorem~\ref{thm:bounded_dist_min} implies that the average iterates of S-NEAR-DGD$^t$ converge in expectation to a neighborhood of the optimal solution $x^\star$ of problem~(\ref{eq:prob_orig}). We have quantified the dependence of this neighborhood on the connectivity of the network and the errors due to imperfect computation and communication through the terms containing the quantities $\beta$, $\sigma_g$ and $\sigma_c$, respectively. 
% % \ew{Very good discussion. Is the inverse scaling with n related to so-called variance reduction?} 

% \hi{We observe that some of the error terms in Eq.~\eqref{eq:thm_xk2} scale favorably with the number of nodes $n$, producing a variance reduction effect proportional to network size}. Large choices for the steplength $\alpha$ yield both faster rates and smaller error neighborhoods (small $\rho$). The size of the error neighborhood increases with the condition number $\kappa$ as expected, while the algorithm parameter $t$ has a mixed effect; performing additional consensus steps balances the error due to network connectivity to an extent, however the terms associated with the error due to imperfect computation increase linearly with $t$.

% %%%%%%%%%%%%%%%%%%%%%%%%%%%%%%%%%%%
% %%%%%%%%%%%%%%%%%%%%%%%%%%%%%%%%%%%
% In the next corollary, we will use Theorem~\ref{thm:bounded_dist_min} and Lemmas~\ref{lem:descent_avg_iter} and~\ref{lem:bounded_iterates} to show that the the local iterates produced by S-NEAR-DGD$^t$ also have bounded distance to the solution of problem~(\ref{eq:prob_orig}). 

\begin{cor}
\textbf{(Convergence of local iterates (Q.2))}
\label{cor:local_dist_q2}
Let $y_{i,k}$ and $x^t_{i,k}$ be the local iterates generated by~\eqref{eq:near_dgd_y_q2} and~\eqref{eq:near_dgd_x_q2}, respectively, from initial point $\*x_0=\*y_0=[y_{1,0};...;y_{n,0}] \in \mathbb{R}^{np}$ and let the steplength $\alpha$ satisfy 
\begin{equation*}
    \alpha < \min \left \{\frac{2}{\mu+L},\frac{2}{\mu_{\bar{f}}+\L_{\bar{f}}} \right\},
\end{equation*}
where $\mu=\min_i\mu_i$, $L=\max_i L_i$, $\mu_{\bar{f}}=\frac{1}{n}\sum_{i=1}^n\mu_i$ and $L_{\bar{f}}=\frac{1}{n}\sum_{i=1}^n L_i$.

Then for $i=1,...,n$ and $k\geq1$ the distance of the local iterates to the solution of Problem~(\ref{eq:consensus_prob}) is bounded, i.e.,
\begin{equation*}
    \begin{split}
        \mathbb{E}\left[\left \|x^t_{i,k} - x^\star \right\|^2 \right]
        \leq &2\rho^k\mathbb{E} \left[\left\| \bar{x}_0 - x^\star\right\|^2\right] + 2\beta^{2t}\left(1+\frac{C }{n}\right)D +\frac{2\alpha\sigma_g^2}{n\gamma_{\bar{f}}} \\
      &\quad + \frac{\beta^{2t}\left(1+\kappa\right)^2\left(n+C\right)\sigma_g^2}{L^2} + 2\left(n+C\right)t\sigma_c^2+ \frac{\beta^{2t}\left(1+\kappa\right)^2\left(n+ C\right)t\sigma_c^2}{\alpha^2L^2} +  \frac{2t\sigma_c^2}{n\alpha\gamma_{\bar{f}}},
      \end{split}
\end{equation*}
and
\begin{equation*}
    \begin{split}
        \mathbb{E}\left[\left \|y_{i,k+1} - x^\star \right\|^2 \right]
        &\leq 2\rho^{k+1} \mathbb{E}\left[ \left\| \bar{x}_0 - x^\star\right\|^2\right] + 2D+ \frac{2\beta^{2t}CD}{n} + \frac{2\alpha \sigma_g^2}{n \gamma_{\bar{f}}^2} \\
      &\quad + \frac{(1+\kappa)^2\left(n+\beta^{2t}C\right)\sigma_g^2}{L^2} + 2Ct\sigma_c^2+ \frac{(1+\kappa)^2 \left(n + \beta^{2t}C\right) t\sigma_c^2}{\alpha^2 L^2} + \frac{2\rho t\sigma_c^2}{n \alpha \gamma_{\bar{f}}},
        \end{split}
\end{equation*}
where $C= \frac{\rho L^2}{\gamma_{\bar{f}}^2}$ $\rho=1-\alpha\gamma_{\bar{f}}$, $\gamma_{\bar{f}}=\frac{\mu_{\bar{f}}L_{\bar{f}}}{\mu_{\bar{f}}+L_{\bar{f}}}$, $\bar{x}_0=\sum_{i=1}^n y_{i,0}$ and the constant $D$ is defined in Lemma~\ref{lem:bounded_iterates_q2}.
\end{cor}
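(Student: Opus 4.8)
The plan is to mirror the proof of Corollary~\ref{cor:local_dist} in structure, replacing each invoked result by its Q.2 counterpart (Lemmas~\ref{lem:bounded_variance_q2} and~\ref{lem:descent_avg_iter} and Theorem~\ref{thm:bounded_dist_min_q2}). For the $x^t_{i,k}$ iterates I would first add and subtract $\bar{x}^t_k$ and apply $\|a+b\|^2\leq 2\|a\|^2+2\|b\|^2$ to obtain
\begin{equation*}
    \left\|x^t_{i,k} - x^\star\right\|^2 \leq 2\left\|x^t_{i,k} - \bar{x}^t_k\right\|^2 + 2\left\|\bar{x}^t_k - x^\star\right\|^2 .
\end{equation*}
Taking the total expectation, I bound the first term on the right using $\|x^t_{i,k}-\bar{x}^t_k\|^2\leq\|\*x^t_k-\*M\*x^t_k\|^2$ together with Lemma~\ref{lem:bounded_variance_q2}, and the second term by Eq.~\eqref{eq:thm_xk2_q2} of Theorem~\ref{thm:bounded_dist_min_q2}; summing and substituting $C=\rho L^2/\gamma_{\bar{f}}^2$ (and $\rho=1-\alpha\gamma_{\bar{f}}$) yields the first claimed inequality.

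For the $y_{i,k+1}$ iterates there is one structural difference worth flagging: under Q.2 the network average is \emph{not} preserved across consensus rounds, so the identity $\bar{y}_{k+1}=\bar{x}^t_{k+1}$ exploited in Corollary~\ref{cor:local_dist} is unavailable. Instead I would write
\begin{equation*}
    \left\|y_{i,k+1} - x^\star\right\|^2 \leq 2\left\|y_{i,k+1} - \bar{y}_{k+1}\right\|^2 + 2\left\|\bar{y}_{k+1} - x^\star\right\|^2 ,
\end{equation*}
bound $\mathbb{E}[\|y_{i,k+1}-\bar{y}_{k+1}\|^2]$ by the $\*y_k$ estimate of Lemma~\ref{lem:bounded_variance_q2}, and for the second term take the total expectation in Lemma~\ref{lem:descent_avg_iter}, which gives $\mathbb{E}[\|\bar{y}_{k+1}-x^\star\|^2]\leq \rho\,\mathbb{E}[\|\bar{x}^t_k-x^\star\|^2]+\alpha^2\sigma_g^2/n+(\alpha\rho L^2/(n\gamma_{\bar{f}}))\,\mathbb{E}[\|\*x^t_k-\*M\*x^t_k\|^2]$. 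Substituting Eq.~\eqref{eq:thm_xk2_q2} for $\mathbb{E}[\|\bar{x}^t_k-x^\star\|^2]$ (which supplies the $\rho^{k+1}$ leading term after multiplication by the outer $\rho$) and Lemma~\ref{lem:bounded_variance_q2} for the consensus-violation term, and then using the identity $\rho+\alpha\gamma_{\bar{f}}=1$ to merge the two families of constant terms, gives the second claimed bound.

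The one genuine obstacle is the final bookkeeping rather than any new idea: one must combine the $\rho^k$/$\rho^{k+1}$ geometric part with a sizable collection of constant terms, keep track of which $\beta^{2t}$, $(1+\kappa)^2$, $\sigma_g^2$, $\sigma_c^2$ and $t$ factors multiply one another, and verify that the Q.2-specific communication-drift contributions — the $t\sigma_c^2/(n\alpha\gamma_{\bar{f}})$ term in Eq.~\eqref{eq:thm_xk2_q2} and the $nt\sigma_c^2$ term in Lemma~\ref{lem:bounded_variance_q2} — assemble into the stated coefficients such as $2(n+C)t\sigma_c^2$ and $2t\sigma_c^2/(n\alpha\gamma_{\bar{f}})$. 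No inequality beyond $\|a+b\|^2\leq 2\|a\|^2+2\|b\|^2$ and the defining relation $\rho=1-\alpha\gamma_{\bar{f}}$ is needed; everything else is a direct substitution of previously established estimates.
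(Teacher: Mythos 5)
Your proposal matches the paper's proof essentially step for step: the same $2\|a\|^2+2\|b\|^2$ decompositions around $\bar{x}^t_k$ and $\bar{y}_{k+1}$, the same invocations of Lemma~\ref{lem:bounded_variance_q2} and Theorem~\ref{thm:bounded_dist_min_q2}, and — for the $y$-iterates — the same route through the total expectation of Lemma~\ref{lem:descent_avg_iter} followed by the $\rho+\alpha\gamma_{\bar{f}}=1$ bookkeeping. Your observation that $\bar{y}_{k+1}=\bar{x}^t_{k+1}$ fails under Q.2 and must be bypassed via Lemma~\ref{lem:descent_avg_iter} is exactly the adjustment the paper makes.
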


\begin{proof}
For $i=1,...,n$ and $k\geq1$, for the $x^t_{i,k}$ iterates we have,
\begin{equation} 
\label{eq:lem_dist_min_local1_q2}
    \begin{split}
        \left \|x^t_{i,k} - x^\star \right\|^2 &=  \left \|x^t_{i,k} -\bar{x}^t_k + \bar{x}^t_k - x^\star \right\|^2 \\
        & \leq  2\left \|x^t_{i,k} -\bar{x}^t_k\right\|^2 +2\left\| \bar{x}^t_k - x^\star \right\|^2,
    \end{split}
\end{equation}
where we added and subtracted $\bar{x}^t_k$ to get the first equality.

Taking the total expectation on both sides of (\ref{eq:lem_dist_min_local1_q2}) and applying Lemma~\ref{lem:bounded_variance_q2} and Theorem~\ref{thm:bounded_dist_min_q2} yields the first result of this corollary.

Similarly, for the $y_{i,k}$ local iterates we have,
\begin{equation}
\label{eq:lem_dist_min_local3_q2}
    \begin{split}
        \left \|y_{i,k+1} - x^\star \right\|^2 &=  \left \|y_{i,k+1}-\bar{y}_{k+1} + \bar{y}_{k+1} - x^\star \right\|^2 \\
        &\leq 2 \left\|y_{i,k+1}-\bar{y}_{k+1}\right\|^2 +2\left\| \bar{y}_{k+1} - x^\star \right\|^2,
        \end{split}
\end{equation}
where we derive the first equality by adding and subtracting $\bar{y}_{k+1}$. 

Moreover, taking the total expectation on both sides of the result of Lemma~\ref{lem:descent_avg_iter} yields,
\begin{equation*}
\label{eq:lem_local_helper}
    \begin{split}
      \mathbb{E}\left[ \left\| \bar{y}_{k+1} - x^\star\right\|^2 \right]
      &\leq  \rho  \mathbb{E}\left[ \left\| \bar{x}^t_{k} - x^\star\right\|^2\right] + \frac{\alpha^2 \sigma_g^2}{n} +  \frac{\alpha \rho L^2 }{n\gamma_{\bar{f}}} \mathbb{E} \left[\left\|\*x^t_k - \*M \*x^t_k\right\|^2\right]\\
      &\leq  \rho \bigg(\rho^k \mathbb{E}\left[ \left\| \bar{x}_0 - x^\star\right\|^2\right] + \frac{\beta^{2t}CD}{n} + \frac{\alpha \sigma_g^2  }{n\gamma_{\bar{f}}}  +  \frac{\beta^{2t} \left(1+\kappa \right)^2\rho\sigma_g^2}{2\gamma_{\bar{f}}^2} + \frac{t\sigma_c^2}{n\alpha\gamma_{\bar{f}}} + Ct\sigma_c^2 +  \frac{\beta^{2t}\left(1+\kappa\right)^2\rho t\sigma_c^2}{2\alpha^2\gamma_{\bar{f}}^2} \bigg)\\
    &\quad + \frac{\alpha^2 \sigma_g^2}{n} + \frac{\alpha C \gamma_{\bar{f}} }{n}\bigg(\beta^{2t}D + 
      \frac{\beta^{2t}(1+\kappa)^2n\sigma_g^2}{2L^2}+ \frac{\beta^{2t}(1+\kappa)^2nt\sigma_c^2}{2\alpha^2 L^2} + nt\sigma_c^2 \bigg)\\
      %%%%%%%%
      %%%%%%%%
      &=\rho^{k+1} \mathbb{E}\left[ \left\| \bar{x}_0 - x^\star\right\|^2\right] + \frac{\beta^{2t}CD}{n}\left(\rho + \alpha \gamma_{\bar{f}}\right) + \frac{\alpha \sigma_g^2}{n}\left(\alpha + \frac{\rho}{\gamma_{\bar{f}}}\right) +  \frac{\beta^{2t} \left(1+\kappa \right)^2\sigma_g^2}{2} \left(\frac{\rho^2}{\gamma_{\bar{f}}^2} + \frac{\alpha C \gamma_{\bar{f}}}{L^2} \right) \\
      &\quad + t\sigma_c^2 \left(\frac{\rho}{n \alpha \gamma_{\bar{f}}} + \rho C + \alpha C \gamma_{\bar{f}} \right) + \frac{\beta^{2t}(1+\kappa)^2 t \sigma_c^2}{2\alpha^2} \left( \frac{\rho^2}{\gamma_{\bar{f}}^2} + \frac{\alpha C \gamma_{\bar{f}}}{L^2}\right)\\
      %%%%%%%%
      &=\rho^{k+1} \mathbb{E}\left[ \left\| \bar{x}_0 - x^\star\right\|^2\right] + \frac{\beta^{2t}CD}{n} + \frac{\alpha \sigma_g^2}{n \gamma_{\bar{f}}^2} \\
      &\quad +  \frac{\beta^{2t} \left(1+\kappa \right)^2 \rho \sigma_g^2}{2\gamma_{\bar{f}}^2}  + t\sigma_c^2 \left(\frac{\rho}{n \alpha \gamma_{\bar{f}}} + C \right) + \frac{\beta^{2t}(1+\kappa)^2 \rho t \sigma_c^2}{2\alpha^2 \gamma_{\bar{f}}^2},
    \end{split}
\end{equation*}
where we applied Theorem~\ref{thm:bounded_dist_min_q2}, Lemma~\ref{lem:bounded_variance_q2} and the definition of $C$ to get the second inequality. We derive the remaining equalities by algebraic manipulation.

Taking the total expectation on both sides of (\ref{eq:lem_dist_min_local3_q2}), applying the inequality above and Lemma~\ref{lem:bounded_variance_q2} completes the proof.
\end{proof}

% Corollary~\ref{cor:local_dist} concludes our analysis of the S-NEAR-DGD$^t$ method. For the remainder of this section, we derive the convergence properties of the instance of S-NEAR-DGD where the sequence of nested consensus steps $\{t(k)\}_k$ increases by one at every iteration of the algorithm, i.e. $t(k)=k$ for $k\geq1$. We will refer to this variant as S-NEAR-DGD$^+$. 

\begin{thm}\textbf{(Distance to minimum, NEAR-DGD$^+$ (Q.2))}
\label{thm:near_dgd_plus_q2}
Consider the S-NEAR-DGD$^+$ method under consensus variation Q.2, i.e. $t(k)=k$ in~\eqref{eq:near_dgd_y_q2} and~\eqref{eq:near_dgd_x_q2}. Let $\bar{x}^k_k=\frac{1}{n}\sum_{i=1}^n x^k_{i,k}$ be the average $x_{i,k}^k$ iterates produced by S-NEAR-DGD$^+$ and let the steplength $\alpha$ satisfy,
\begin{equation*}
    \alpha < \min \left \{\frac{2}{\mu+L},\frac{2}{\mu_{\bar{f}}+L_{\bar{f}}} \right\}.
\end{equation*}
Then the following inequality holds for the distance of $\bar{x}_k$ to $x^\star$ for $k=1,2,...$
\begin{equation*}
% \label{eq:thm_near_dgd_p_2}
    \begin{split}
        \mathbb{E}\left[\left\|\bar{x}^k_k - x^\star \right\|^2\right] &\leq \rho^k \mathbb{E} \left[ \left\|\bar{x}_0 - x^\star \right\|^2 \right]+ \frac{\eta\theta^k\alpha\rho L^2D}{n\gamma_{\bar{f}}} + \frac{\alpha\sigma_g^2}{n\gamma_{\bar{f}}}\\
        &\quad+ \frac{\eta\theta^k\alpha\left(1+\kappa\right)^{2} \rho \sigma_g^2}{2\gamma_{\bar{f}}} + \frac{(k-1) \sigma_c^2}{n\alpha\gamma_{\bar{f}}} + \frac{\rho L^2 (k-1)\sigma_c^2}{\gamma_{\bar{f}}^2}  + \frac{\eta\theta^k\left(1+\kappa\right)^{2} \rho (k-1)\sigma_c^2}{2\alpha\gamma_{\bar{f}}},
    \end{split}
\end{equation*} 
where $\eta= \left|\beta^2-\rho\right|^{-1}$ and $\theta = \max\left\{\rho,\beta^2\right\}$.
% \ew{consider cancel out the 2 on both numerator and denominator in the 4th term}
\end{thm}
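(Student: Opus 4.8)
The plan is to follow the proof of Theorem~\ref{thm:near_dgd_plus} almost verbatim, now starting from the one-step recursion~\eqref{eq:thm_xk1_q2} of Theorem~\ref{thm:bounded_dist_min_q2} rather than from~\eqref{eq:thm_xk1}. First I would replace $t$ with $k$ in~\eqref{eq:thm_xk1_q2}; this produces a bound on $\mathbb{E}\big[\|\bar{x}^{k+1}_{k+1}-x^\star\|^2\big]$ in terms of $\mathbb{E}\big[\|\bar{x}^{k}_{k}-x^\star\|^2\big]$ whose error coefficients are of two kinds: those carrying a factor $\beta^{2k}$ (the terms with $D$ and with $\sigma_g^2$, plus one of the $\sigma_c^2$ terms), and those that are \emph{linear in} $k$, namely $\frac{k\sigma_c^2}{n}$, $\frac{\alpha\rho L^2 k\sigma_c^2}{\gamma_{\bar{f}}}$ and $\frac{\beta^{2k}(1+\kappa)^2\rho k\sigma_c^2}{2\alpha\gamma_{\bar{f}}}$. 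These linear-in-$k$ terms vanish at the first step since $t(0)=0$. I would then unroll the recursion over iterations $1,2,\dots,k$, using $\rho<1$.

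After unrolling, every error term becomes multiplied by a partial sum. Exactly as in Theorem~\ref{thm:near_dgd_plus}, the $\beta^{2j}$-type contributions collect into $S_1=\sum_{j=0}^{k-1}\rho^j\beta^{2(k-1-j)}$ and the $\beta$-free contributions into $S_2=\sum_{j=0}^{k-1}\rho^j$; the three communication-error terms that additionally carry the growing consensus count contribute an extra factor $j$ inside the corresponding sum, which I would bound crudely by $j\le k-1$ and pull out. This reduces every contribution to a multiple of $S_1$ or $S_2$, possibly times $k-1$, after which I reuse the two estimates already established in the proof of Theorem~\ref{thm:near_dgd_plus}: with $\psi=\rho/\beta^2$, $S_1=\beta^{2(k-1)}\frac{1-\psi^k}{1-\psi}=\frac{\beta^{2k}-\rho^k}{\beta^2-\rho}\le\eta\theta^k$ where $\eta=|\beta^2-\rho|^{-1}$ and $\theta=\max\{\rho,\beta^2\}$, and $S_2\le(1-\rho)^{-1}=(\alpha\gamma_{\bar{f}})^{-1}$. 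Substituting these and matching terms yields the stated inequality.

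I expect the work to be essentially bookkeeping; the one conceptual point, which is also the source of the divergence conclusion, is that under variant~Q.2 the per-iteration communication error grows with the number of consensus rounds: Lemma~\ref{lem:comm_error_q2} gives $\mathbb{E}_{\mathcal{T}_c}\big[\|\mathcal{E}^c_{t,k}\|^2\,\big|\,\mathcal{F}^0_k\big]\le nt\sigma_c^2$ in place of the bounded $\frac{4n\sigma_c^2}{1-\beta^2}$ of Lemma~\ref{lem:comm_error}, and an additional unattenuated term $\frac{t\sigma_c^2}{n}$ enters through~\eqref{eq:lem_bounded_min201_q2}. Because $t(j)=j$ is unbounded, replacing $j$ by $k-1$ is exactly what produces the $(k-1)$ prefactors in the statement, and since those terms do not vanish as $k\to\infty$ the right-hand side grows without bound, which is the divergence assertion. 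The only genuine subtlety is the degenerate case $\beta^2=\rho$, where $\eta$ is undefined and one must instead use $S_1=k\beta^{2(k-1)}$; under the assumption $\beta^2\neq\rho$ implicit in the definition of $\eta$, the bound holds as written.
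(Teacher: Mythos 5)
Your proposal matches the paper's proof essentially step for step: the paper substitutes $t=k$ into~\eqref{eq:thm_xk1_q2}, unrolls the recursion into the sums $S_1=\sum_{j=0}^{k-1}\rho^j\beta^{2(k-1-j)}$, $S_2=\sum_{j=0}^{k-1}\rho^j$, $S_3=\sum_{j=0}^{k-1}(k-1-j)\rho^j$ and $S_4=\sum_{j=0}^{k-1}(k-1-j)\rho^j\beta^{2(k-1-j)}$, and bounds the last two exactly as you propose, via $S_3\le(k-1)S_2\le\frac{k-1}{\alpha\gamma_{\bar{f}}}$ and $S_4\le(k-1)S_1\le(k-1)\eta\theta^k$. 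Your remark about the degenerate case $\beta^2=\rho$ is a fair caveat, but it applies equally to the paper's Theorem~\ref{thm:near_dgd_plus} and is implicitly excluded by the definition of $\eta$.
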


\begin{proof}
Replacing $t$ with $k$ in (\ref{eq:thm_xk1_q2}) in Theorem~\ref{thm:bounded_dist_min_q2} yields,
\begin{equation*}
    \begin{split}
     \mathbb{E}\left[\left\|\bar{x}^{k+1}_{k+1} - x^\star\right\|^2  \right]
    &\leq  \rho \mathbb{E}\left[\left\| \bar{x}^k_{k} - x^\star\right\|^2\right] + \frac{\alpha\beta^{2k}\rho L^2D}{n\gamma_{\bar{f}}} +\frac{\alpha^2\sigma_g^2}{n}\\
    &\quad + \frac{\alpha\beta^{2k}\left(1+\kappa\right)^{2}\rho\sigma_g^2}{2\gamma_{\bar{f}}}+ \frac{k\sigma_c^2}{n} + \frac{\alpha\rho L^2k\sigma_c^2}{\gamma_{\bar{f}}} + \frac{\beta^{2k}\left(1+\kappa\right)^{2}\rho k\sigma_c^2}{2\alpha\gamma_{\bar{f}}}.
    \end{split}
\end{equation*}
Applying recursively for iterations $1, 2,\ldots, k$, we obtain,
\begin{equation}
\label{eq:thm_near_dgd_p_1_q2}
    \begin{split}
        \mathbb{E}\left[\left\|\bar{x}^k_k - x^\star \right\|^2\right] &\leq \rho^k \mathbb{E} \left[ \left\|\bar{x}_0 - x^\star \right\|^2 \right]+ S_1\left(\frac{\alpha\rho L^2D}{n\gamma_{\bar{f}}} + \frac{\alpha\left(1+\kappa\right)^2\rho \sigma_g^2}{2\gamma_{\bar{f}}} \right)  \\
        &\quad + S_2\left(\frac{\alpha^2\sigma_g^2}{n}\right)  + S_3\left( \frac{\sigma_c^2}{n} + \frac{\alpha\rho L^2\sigma_c^2}{\gamma_{\bar{f}}}  \right) + S_4\left(\frac{\left(1+\kappa\right)^2\rho \sigma_c^2}{2\alpha\gamma_{\bar{f}}}\right).
    \end{split}
\end{equation}
where 
\begin{gather*}
    S_1=\sum_{j=0}^{k-1} \rho^j \beta^{2(k-1-j)}, \quad S_2 = \sum_{j=0}^{k-1}\rho^j\\
    S_3 = \sum_{j=0}^{k-1}(k-1-j) \rho^{j}, \quad S_4 = \sum_{j=0}^{k-1} (k-1-j)\rho^j\beta^{2(k-1-j)}.
\end{gather*}
Let $\psi = \frac{\rho}{\beta^2}$. We can bound the first two sums with $ S_1 = \beta^{2(k-1)} \sum_{j=0}^{k-1} \psi^j
    = \beta^{2(k-1)}\frac{1-\psi^k}{1-\psi}
     = \frac{\beta^{2k}-\rho^k}{\beta^2 - \rho}
    \leq \eta \theta^k$ and $S_2 = \frac{1-\rho^k}{1-\rho} \leq \frac{1}{1-\rho}=\frac{1}{\alpha \gamma_{\bar{f}}}$.
For the third sum we obtain $S_3=(k-1)S_2 -  \sum_{j=0}^{k-1}j \rho^{j} 
          \leq (k-1)S_2
          = \frac{k-1}{\alpha \gamma_{\bar{f}}}$.
Finally, we can derive an upper bound for the final sum using $S_4= (k-1)S_1 - \sum_{j=0}^{k-1} j\rho^j\beta^{2(k-1-j)}\leq (k-1)S_1\leq  (k-1) \eta \theta^k$.

Substituting the sum bounds in (\ref{eq:thm_near_dgd_p_1_q2}) yields the final result of this theorem.
\end{proof}

\bibliographystyle{ieeetr}
\bibliography{bibtex/prospectus} %Input bib files here  

\begin{thebibliography}{10}

\bibitem{tsitsiklis_Problems_1984}
J.~Tsitsiklis, {\em Problems in {Decentralized} {Decision} {Making} and
  {Computation}}.
\newblock Massachusetts Institute of Technology, Department of Electrical
  Engineering and Computer Science, 1984.

\bibitem{bertsekas_parallel}
D.~P. Bertsekas and J.~N. Tsitsiklis, {\em Parallel and Distributed
  Computation: Numerical Methods}.
\newblock USA: Prentice-Hall, Inc., 1989.

\bibitem{predd_ml_2009}
J.~B. {Predd}, S.~R. {Kulkarni}, and H.~V. {Poor}, ``A collaborative training
  algorithm for distributed learning,'' {\em IEEE Transactions on Information
  Theory}, vol.~55, pp.~1856--1871, April 2009.

\bibitem{sensor4}
I.~D. {Schizas}, A.~{Ribeiro}, and G.~B. {Giannakis}, ``Consensus in ad hoc
  wsns with noisy links—part i: Distributed estimation of deterministic
  signals,'' {\em IEEE Transactions on Signal Processing}, vol.~56,
  pp.~350--364, Jan 2008.

\bibitem{sensor5}
A.~G. {Dimakis}, S.~{Kar}, J.~M.~F. {Moura}, M.~G. {Rabbat}, and
  A.~{Scaglione}, ``Gossip algorithms for distributed signal processing,'' {\em
  Proceedings of the IEEE}, vol.~98, pp.~1847--1864, Nov 2010.

\bibitem{power5}
D.~K. {Molzahn}, F.~{Dörfler}, H.~{Sandberg}, S.~H. {Low}, S.~{Chakrabarti},
  R.~{Baldick}, and J.~{Lavaei}, ``A survey of distributed optimization and
  control algorithms for electric power systems,'' {\em IEEE Transactions on
  Smart Grid}, vol.~8, pp.~2941--2962, Nov 2017.

\bibitem{smart_grids}
G.~B. {Giannakis}, V.~{Kekatos}, N.~{Gatsis}, S.~{Kim}, H.~{Zhu}, and B.~F.
  {Wollenberg}, ``Monitoring and optimization for power grids: A signal
  processing perspective,'' {\em IEEE Signal Processing Magazine}, vol.~30,
  pp.~107--128, Sep. 2013.

\bibitem{giannakisPower}
V.~Kekatos and G.~B. Giannakis, ``Distributed robust power system state
  estimation,'' {\em IEEE Transactions on Power Systems}, vol.~28, no.~2,
  pp.~1617--1626, 2013.

\bibitem{vehicles}
R.~W. {Beard} and V.~{Stepanyan}, ``Information consensus in distributed
  multiple vehicle coordinated control,'' in {\em 42nd IEEE International
  Conference on Decision and Control (IEEE Cat. No.03CH37475)}, vol.~2,
  pp.~2029--2034 Vol.2, Dec 2003.

\bibitem{robots2}
K.~{Zhou} and S.~I. {Roumeliotis}, ``Multirobot active target tracking with
  combinations of relative observations,'' {\em IEEE Transactions on Robotics},
  vol.~27, pp.~678--695, Aug 2011.

\bibitem{robots3}
Y.~{Cao}, D.~{Stuart}, W.~{Ren}, and Z.~{Meng}, ``Distributed containment
  control for multiple autonomous vehicles with double-integrator dynamics:
  Algorithms and experiments,'' {\em IEEE Transactions on Control Systems
  Technology}, vol.~19, pp.~929--938, July 2011.

\bibitem{nedic_ML_review}
A.~{Nedic}, ``Distributed gradient methods for convex machine learning problems
  in networks: Distributed optimization,'' {\em IEEE Signal Processing
  Magazine}, vol.~37, no.~3, pp.~92--101, 2020.

\bibitem{hong2016unified}
M.~Hong, M.~Razaviyayn, Z.~Luo, and J.~Pang, ``A unified algorithmic framework
  for block-structured optimization involving big data: With applications in
  machine learning and signal processing,'' {\em IEEE Signal Processing
  Magazine}, vol.~33, no.~1, pp.~57--77, 2016.

\bibitem{richtarik2016parallel}
P.~Richt{\'a}rik and M.~Tak{\'a}{\v{c}}, ``Parallel coordinate descent methods
  for big data optimization,'' {\em Mathematical Programming}, vol.~156,
  no.~1-2, pp.~433--484, 2016.

\bibitem{tsitsiklis_distributed}
J.~{Tsitsiklis}, D.~{Bertsekas}, and M.~{Athans}, ``Distributed asynchronous
  deterministic and stochastic gradient optimization algorithms,'' {\em IEEE
  Transactions on Automatic Control}, vol.~31, pp.~803--812, Sep. 1986.

\bibitem{NedicSubgradientConsensus}
A.~Nedi\'c and A.~Ozdaglar, ``Distributed subgradient methods for multi-agent
  optimization,'' {\em IEEE Transactions on Automatic Control}, vol.~54, pp.~48
  --61, Jan 2009.

\bibitem{diging}
A.~Nedich, A.~Olshevsky, and A.~Shi, ``Achieving geometric convergence for
  distributed optimization over time-varying graphs,'' {\em SIAM Journal on
  Optimization}, vol.~27, pp.~2597--2633, 1 2017.

\bibitem{extra}
W.~Shi, Q.~Ling, G.~Wu, and W.~Yin, ``Extra: An exact first-order algorithm for
  decentralized consensus optimization,'' {\em SIAM Journal on Optimization},
  vol.~25, no.~2, pp.~944--966, 2015.

\bibitem{exact_diffusion}
K.~{Yuan}, B.~{Ying}, X.~{Zhao}, and A.~H. {Sayed}, ``Exact diffusion for
  distributed optimization and learning—part i: Algorithm development,'' {\em
  IEEE Transactions on Signal Processing}, vol.~67, pp.~708--723, Feb 2019.

\bibitem{harnessing_smoothness}
G.~Qu and N.~Li, ``Harnessing smoothness to accelerate distributed
  optimization,'' {\em IEEE Transactions on Control of Network Systems},
  vol.~PP, pp.~1--1, 04 2017.

\bibitem{survey_distr_opt}
T.~Yang, X.~Yi, J.~Wu, Y.~Yuan, D.~Wu, Z.~Meng, Y.~Hong, H.~Wang, Z.~Lin, and
  K.~H. Johansson, ``A survey of distributed optimization,'' {\em Annual
  Reviews in Control}, vol.~47, pp.~278 -- 305, 2019.

\bibitem{reisizadeh_exact_2018}
A.~{Reisizadeh}, A.~{Mokhtari}, H.~{Hassani}, and R.~{Pedarsani}, ``An exact
  quantized decentralized gradient descent algorithm,'' {\em IEEE Transactions
  on Signal Processing}, vol.~67, no.~19, pp.~4934--4947, 2019.

\bibitem{lan_communication-efficient_2017}
G.~Lan, S.~Lee, and Y.~Zhou, ``Communication-efficient algorithms for
  decentralized and stochastic optimization,'' {\em Mathematical Programming},
  vol.~180, pp.~237--284, Mar. 2020.

\bibitem{rabbat_quantized_2005}
M.~Rabbat and R.~Nowak, ``Quantized incremental algorithms for distributed
  optimization,'' {\em IEEE Journal on Selected Areas in Communications},
  vol.~23, pp.~798--808, Apr. 2005.

\bibitem{nedic_distributed_2008}
A.~Nedic, A.~Olshevsky, A.~Ozdaglar, and J.~N. Tsitsiklis, ``Distributed
  subgradient methods and quantization effects,'' in {\em 2008 47th {IEEE}
  {Conference} on {Decision} and {Control}}, (Cancun, Mexico), pp.~4177--4184,
  IEEE, 2008.

\bibitem{li_distributed_2017}
J.~Li, G.~Chen, Z.~Wu, and X.~He, ``Distributed subgradient method for
  multi-agent optimization with quantized communication: {J}. {LI} \textit{{ET}
  {AL}},'' {\em Mathematical Methods in the Applied Sciences}, vol.~40,
  pp.~1201--1213, Mar. 2017.

\bibitem{yuan_distributed_2012}
D.~Yuan, S.~Xu, H.~Zhao, and L.~Rong, ``Distributed dual averaging method for
  multi-agent optimization with quantized communication,'' {\em Systems \&
  Control Letters}, vol.~61, pp.~1053--1061, Nov. 2012.

\bibitem{doan_distributed_2018}
T.~T. {Doan}, S.~T. {Maguluri}, and J.~{Romberg}, ``Convergence rates of
  distributed gradient methods under random quantization: A stochastic
  approximation approach,'' {\em IEEE Transactions on Automatic Control},
  pp.~1--1, 2020.

\bibitem{lee2018finite}
C.-S. Lee, N.~Michelusi, and G.~Scutari, ``Finite rate quantized distributed
  optimization with geometric convergence,'' in {\em 2018 52nd Asilomar
  Conference on Signals, Systems, and Computers}, pp.~1876--1880, IEEE, 2018.

\bibitem{doan_accelerating_2018}
T.~T. {Doan}, S.~T. {Maguluri}, and J.~{Romberg}, ``Fast convergence rates of
  distributed subgradient methods with adaptive quantization,'' {\em IEEE
  Transactions on Automatic Control}, pp.~1--1, 2020.

\bibitem{pu_quantization_2017}
Y.~Pu, M.~N. Zeilinger, and C.~N. Jones, ``Quantization {Design} for
  {Distributed} {Optimization},'' {\em IEEE Transactions on Automatic Control},
  vol.~62, pp.~2107--2120, May 2017.

\bibitem{alistarh2017qsgd}
D.~Alistarh, D.~Grubic, J.~Li, R.~Tomioka, and M.~Vojnovic, ``Qsgd:
  Communication-efficient sgd via gradient quantization and encoding,'' in {\em
  Advances in Neural Information Processing Systems}, pp.~1709--1720, 2017.

\bibitem{yi_quantized_2014}
P.~Yi and Y.~Hong, ``Quantized {Subgradient} {Algorithm} and {Data}-{Rate}
  {Analysis} for {Distributed} {Optimization},'' {\em IEEE Transactions on
  Control of Network Systems}, vol.~1, pp.~380--392, Dec. 2014.

\bibitem{zhu_param_estim}
S.~{Zhu}, Y.~C. {Soh}, and L.~{Xie}, ``Distributed parameter estimation with
  quantized communication via running average,'' {\em IEEE Transactions on
  Signal Processing}, vol.~63, no.~17, pp.~4634--4646, 2015.

\bibitem{sgd_bottou}
L.~Bottou, ``Large-scale machine learning with stochastic gradient descent,''
  in {\em Proceedings of COMPSTAT'2010} (Y.~Lechevallier and G.~Saporta, eds.),
  (Heidelberg), pp.~177--186, Physica-Verlag HD, 2010.

\bibitem{goyal_minibatch}
P.~Goyal, P.~Dollár, R.~Girshick, P.~Noordhuis, L.~Wesolowski, A.~Kyrola,
  A.~Tulloch, Y.~Jia, and K.~He, ``Accurate, large minibatch sgd: Training
  imagenet in 1 hour,'' 06 2017.

\bibitem{federated5}
H.~B. McMahan, E.~Moore, D.~Ramage, S.~Hampson, and B.~A. y~Arcas,
  ``Communication-efficient learning of deep networks from decentralized
  data,'' 2016.

\bibitem{parallel_sgd}
M.~Zinkevich, M.~Weimer, A.~Smola, and L.~Li, ``Parallelized stochastic
  gradient descent.,'' vol.~23, pp.~2595--2603, 01 2010.

\bibitem{lian_can_nodate}
X.~Lian, C.~Zhang, H.~Zhang, C.-J. Hsieh, W.~Zhang, and J.~Liu, ``Can
  decentralized algorithms outperform centralized algorithms? a case study for
  decentralized parallel stochastic gradient descent,'' in {\em Advances in
  Neural Information Processing Systems} (I.~Guyon, U.~V. Luxburg, S.~Bengio,
  H.~Wallach, R.~Fergus, S.~Vishwanathan, and R.~Garnett, eds.), vol.~30,
  pp.~5330--5340, Curran Associates, Inc., 2017.

\bibitem{morral_success_2017}
G.~Morral, P.~Bianchi, and G.~Fort, ``Success and {Failure} of
  {Adaptation}-{Diffusion} {Algorithms} {With} {Decaying} {Step} {Size} in
  {Multiagent} {Networks},'' {\em IEEE Transactions on Signal Processing},
  vol.~65, pp.~2798--2813, June 2017.

\bibitem{pu_central_distr}
S.~{Pu}, A.~{Olshevsky}, and I.~C. {Paschalidis}, ``Asymptotic network
  independence in distributed stochastic optimization for machine learning:
  Examining distributed and centralized stochastic gradient descent,'' {\em
  IEEE Signal Processing Magazine}, vol.~37, no.~3, pp.~114--122, 2020.

\bibitem{sundharram_distributed_2010}
S.~Sundhar Ram, A.~Nedić, and V.~V. Veeravalli, ``Distributed {Stochastic}
  {Subgradient} {Projection} {Algorithms} for {Convex} {Optimization},'' {\em
  Journal of Optimization Theory and Applications}, vol.~147, pp.~516--545,
  Dec. 2010.

\bibitem{srivastava_distributed_2011}
K.~Srivastava and A.~Nedic, ``Distributed {Asynchronous} {Constrained}
  {Stochastic} {Optimization},'' {\em IEEE Journal of Selected Topics in Signal
  Processing}, vol.~5, pp.~772--790, Aug. 2011.

\bibitem{towfic_adaptive_2014}
Z.~J. Towfic and A.~H. Sayed, ``Adaptive {Penalty}-{Based} {Distributed}
  {Stochastic} {Convex} {Optimization},'' {\em IEEE Transactions on Signal
  Processing}, vol.~62, pp.~3924--3938, Aug. 2014.

\bibitem{olshevsky_non-asymptotic_2019}
S.~Pu, A.~Olshevsky, and I.~C. Paschalidis, ``A sharp estimate on the transient
  time of distributed stochastic gradient descent,'' 2020.

\bibitem{chatzipanagiotis_distributed_2016}
N.~Chatzipanagiotis and M.~M. Zavlanos, ``A {Distributed} {Algorithm} for
  {Convex} {Constrained} {Optimization} {Under} {Noise},'' {\em IEEE
  Transactions on Automatic Control}, vol.~61, pp.~2496--2511, Sept. 2016.

\bibitem{hong_stochastic_2017}
M.~Hong and T.-H. Chang, ``Stochastic {Proximal} {Gradient} {Consensus} {Over}
  {Random} {Networks},'' {\em IEEE Transactions on Signal Processing}, vol.~65,
  pp.~2933--2948, June 2017.

\bibitem{nedic_stochastic_2014}
A.~{Nedić} and A.~{Olshevsky}, ``Stochastic gradient-push for strongly convex
  functions on time-varying directed graphs,'' {\em IEEE Transactions on
  Automatic Control}, vol.~61, no.~12, pp.~3936--3947, 2016.

\bibitem{olshevsky_robust_2018}
A.~Spiridonoff, A.~Olshevsky, and I.~C. Paschalidis, ``Robust asynchronous
  stochastic gradient-push: Asymptotically optimal and network-independent
  performance for strongly convex functions,'' {\em Journal of Machine Learning
  Research}, vol.~21, no.~58, pp.~1--47, 2020.

\bibitem{duchi_ml_2012}
J.~C. {Duchi}, A.~{Agarwal}, and M.~J. {Wainwright}, ``Dual averaging for
  distributed optimization: Convergence analysis and network scaling,'' {\em
  IEEE Transactions on Automatic Control}, vol.~57, pp.~592--606, March 2012.

\bibitem{fallah2019robust}
A.~Fallah, M.~Gurbuzbalaban, A.~Ozdaglar, U.~Simsekli, and L.~Zhu, ``Robust
  distributed accelerated stochastic gradient methods for multi-agent
  networks,'' 2019.

\bibitem{mokhtari_dsa:_2015}
A.~Mokhtari and A.~Ribeiro, ``Dsa: Decentralized double stochastic averaging
  gradient algorithm,'' {\em J. Mach. Learn. Res.}, vol.~17, p.~2165–2199,
  Jan. 2016.

\bibitem{pu_distributed_2018-1}
S.~{Pu} and A.~{Nedić}, ``A distributed stochastic gradient tracking method,''
  in {\em 2018 IEEE Conference on Decision and Control (CDC)}, pp.~963--968,
  2018.

\bibitem{shen_towards_2018}
Z.~Shen, A.~Mokhtari, T.~Zhou, P.~Zhao, and H.~Qian, ``Towards more efficient
  stochastic decentralized learning: Faster convergence and sparse
  communication,'' in {\em Proceedings of the 35th International Conference on
  Machine Learning} (J.~Dy and A.~Krause, eds.), vol.~80 of {\em Proceedings of
  Machine Learning Research}, (Stockholmsmässan, Stockholm Sweden),
  pp.~4624--4633, PMLR, 10--15 Jul 2018.

\bibitem{variance_reduced_gt}
R.~{Xin}, U.~A. {Khan}, and S.~{Kar}, ``Variance-reduced decentralized
  stochastic optimization with accelerated convergence,'' {\em IEEE
  Transactions on Signal Processing}, vol.~68, pp.~6255--6271, 2020.

\bibitem{li2020sdiging}
H.~{Li}, L.~{Zheng}, Z.~{Wang}, Y.~{Yan}, L.~{Feng}, and J.~{Guo}, ``S-diging:
  A stochastic gradient tracking algorithm for distributed optimization,'' {\em
  IEEE Transactions on Emerging Topics in Computational Intelligence},
  pp.~1--13, 2020.

\bibitem{berahas_balancing_2019}
A.~S. Berahas, R.~Bollapragada, N.~S. Keskar, and E.~Wei, ``Balancing
  {Communication} and {Computation} in {Distributed} {Optimization},'' {\em
  IEEE Transactions on Automatic Control}, vol.~64, pp.~3141--3155, Aug. 2019.

\bibitem{berahas2019nested}
A.~S. Berahas, C.~Iakovidou, and E.~Wei, ``Nested distributed gradient methods
  with adaptive quantized communication,'' 2019.

\bibitem{Iakovidou2019NestedDG}
C.~Iakovidou and E.~Wei, ``Nested distributed gradient methods with stochastic
  computation errors,'' {\em 2019 57th Annual Allerton Conference on
  Communication, Control, and Computing (Allerton)}, pp.~339--346, 2019.

\bibitem{boyd_consensus}
S.~Boyd, P.~Diaconis, and L.~Xiao, ``Fastest mixing markov chain on a graph,''
  {\em SIAM Review}, vol.~46, 03 2003.

\bibitem{berahas2020convergence}
A.~S. Berahas, R.~Bollapragada, and E.~Wei, ``On the convergence of nested
  decentralized gradient methods with multiple consensus and gradient steps,''
  2020.

\bibitem{nesterov_introductory_1998}
Y.~Nesterov, ``Introductory {Lectures} on {Convex} {Programming} {Volume} {I}:
  {Basic} course,'' p.~212, July 1998.

\bibitem{mushrooms}
D.~Dua and C.~Graff, ``{UCI} machine learning repository,'' 2017.

\bibitem{Yuan2020CanPM}
K.~Yuan, W.~Xu, and Q.~Ling, ``Can primal methods outperform primal-dual
  methods in decentralized dynamic optimization?,'' {\em IEEE Transactions on
  Signal Processing}, vol.~68, pp.~4466--4480, 2020.

\bibitem{welford_method}
B.~P. Welford, ``Note on a method for calculating corrected sums of squares and
  products,'' {\em Technometrics}, vol.~4, no.~3, pp.~419--420, 1962.

\end{thebibliography}

% \appendices

% \ifCLASSOPTIONcaptionsoff
%   \newpage
% \fi

% \bibliographystyle{unsrt}
% \bibliography{prospectus}

% that's all folks
\end{document}